\documentclass[12pt]{amsart}

\usepackage[all,color]{xy}
\usepackage{pb-diagram}
\usepackage[mathscr]{eucal}
\usepackage{hyperref}
\hypersetup{
    colorlinks=true, 
    linktoc=all,     
    linkcolor=blue,  
}

\usepackage{xcolor}

\usepackage[toc,page]{appendix}
\usepackage{pifont}
\usepackage{combelow} 

\RequirePackage{ifthen,setspace,enumitem,tikz}
\usetikzlibrary{arrows}

\DeclareMathAlphabet{\mathpzc}{OT1}{pzc}{m}{it}

\usepackage{amsfonts}
\usepackage{amsmath}
\usepackage{amssymb}

\newcommand{\tr}{\textnormal{tr}}
\newcommand{\ric}{\textnormal{Ric}}

\newcommand{\dbar}{\overline{\partial}}

\newcommand{\ddt}[1]{\frac{\partial #1}{\partial t}}
\newcommand{\R}[4]{R_{#1 \overline{#2} #3 \overline{#4}}}

\newcommand{\diam}{\textnormal{Diam}}
\newcommand{\RCD}{\textnormal{RCD}}
\newcommand{\dgh}{d_{\textnormal{GH}}}

\newcommand{\ddbar}{\sqrt{-1}\partial\dbar}

\def\v{{\vskip .1in}}

\def\cB{{\mathcal B}}

\def\cF{{\mathcal F}}

\def\cK{\mathcal{K}}

\def\R{{\mathbb R}}

\def\PSH{\textnormal{PSH}}
\def\Vol{\textnormal{Vol}}

\newtheorem{claim}{Claim}
\newtheorem{theorem}{Theorem}[section]

\newtheorem{lemma}{Lemma}[section]
\newtheorem{definition}{Definition}[section]
\newtheorem{corollary}{Corollary}[section]

\newtheorem{conjecture}{Conjecture}[section]
\newtheorem{question}{Question}[section]

\numberwithin{equation}{section}

\begin{document}

\title{Geometric stability for complex Monge-Amp\`ere equations}

\author{Bin Guo $^*$, Jian Song $^\dagger$ and Jacob Sturm $^{\dagger \dagger}$ }

\address{$^*$ Department of Mathematics \& Computer Science, Rutgers University, Newark, NJ 07102}

\email{bguo@rutgers.edu}

\address{$^{\dagger}$ Department of Mathematics, Rutgers University, Piscataway, NJ 08854}

\email{jiansong@math.rutgers.edu}

\address{$^{\dagger\dagger}$  Department of Mathematics \& Computer Science, Rutgers University, Newark, NJ 07102}

\email{sturm@newark.rutgers.edu}

\thanks{Research supported in part by the National Science Foundation under grants DMS-2203607 and DMS-2505575.}

\begin{abstract} Let $X$ be a compact  K\"ahler manifold. The analytic stability theorem of Ko\l odziej for complex Monge-Amp\`ere equation states that for any  K\"ahler metrics $\omega$ and $\omega'$ in the same cohomology class, if their volume measures are bounded in $L^p(X)$ (for some $p>1$) and close in $L^1(X)$, then their  K\"ahler potentials are close in $L^\infty(X)$. In this paper, we establish the geometric stability for complex Monge-Amp\`ere equations that $L^1$-closeness of volume measures implies $L^\infty$-closeness for the induced distance functions by $\omega$ and $\omega'$. Consequently, we prove that any non-smooth  K\"ahler current with volume measure bounded in $L^p$ (for some $p>1$) and  Ricci current bounded below induces a unique metric space, which turns out to be a compact RCD space homeomorphic to $X$ itself.

{\footnotesize }

\end{abstract}

\maketitle

\section{Introduction}

Let $(X, \theta)$ be an $n$-dimensional compact  K\"ahler manifold equipped with a fixed smooth  K\"ahler metric $\theta$. We define \begin{equation}\label{pKspace}
\cK_\theta(p, K)
\end{equation}
to be the set of all smooth  K\"ahler metrics $\omega\in [\theta]$ satisfying
\begin{equation}\label{lp}
    \left\|\frac{\omega^n}{\theta^n }\right\|_{L^p(X, \theta^n)}\leq K
    \end{equation}
with $p>1$. For convenience, we always assume $p>1$ throughout the paper.
If we let 
$$\omega = \theta + \ddbar \varphi, ~f= -\log \frac{\omega^n}{\theta^n}$$ 
with $\sup_X \varphi =0$, then $\varphi$ satisfies the following complex Monge-Amp\`ere equation
\begin{equation}\label{maeqn1}
(\theta+ \ddbar \varphi)^n = e^{-f}\theta^n, ~\sup_X \varphi=0. 
\end{equation}

The classical $L^\infty$-estimate of Ko\l odziej \cite{Kol1} shows that  $\|\varphi\|_{L^\infty(X)}$ is uniformly bounded for fixed $p$ and $K$.  Ko\l odziej \cite{Kol2} further establishes the analytic stability for equation (\ref{maeqn1}). For any $\epsilon>0$ and any two  K\"ahler metrics $$\omega=\theta+\ddbar\varphi, ~\omega'=\theta+\ddbar \varphi' \in \cK_\theta(p, K), ~\sup_X\varphi = \sup_X \varphi'=0,$$
there exists $\delta=\delta(p, K, \epsilon)>0$ such that if 
\begin{equation}\label{l1sm}
\| \omega^n - (\omega')^n \|_{L^1(X)} < \delta, 
\end{equation}
then $$\|\varphi- \varphi'\|_{L^\infty(X)} < \epsilon. 
$$
This stability result immediately implies the uniqueness for solutions of complex Monge-Amp\`ere equations with $e^{-f}\in L^p(X, \theta^n)$.

The norm $\|(\omega')^n-\omega^n\|_{L^1(X)}$ in (\ref{l1close}) is  the total variation distance between the volume measures $\omega^n$ and $(\omega')^n$, which  can also be expressed as  
$$\| (\omega')^n - \omega^n\|_{L^1(X)} = \left\| \frac{(\omega')^n}{\omega^n} - 1\right\|_{L^1(X, \omega^n)} = \left\|\frac{(\omega')^n}{\theta^n}- \frac{\omega^n}{\theta^n}\right\|_{L^1(X, \theta^n)}.$$

The main goal of this paper is to establish geometric stability for the complex Monge-Amp\`ere equation (\ref{maeqn1}) in terms of the distance functions induced by $\omega$ and $\omega'$ instead of the  K\"ahler potentials. It has always been a challenge to transform the analytic estimates into geometric estimates in the theory of complex Monge-Amp\`ere equations until recent progress made in \cite{GPSS1, GPSS2}. 

\medskip

\subsection{Main results.} ~Let us now state our main result.

\begin{theorem}\label{thm:main1} Let $(X, \theta)$ be a compact K\"ahler manifold of complex dimension $n$. For any $\epsilon>0$ and any $\Lambda\geq 0$, there exists
$\delta=\delta(X,\theta,p,K,\Lambda,\epsilon)>0$ such that for any $\omega\in \cK_\theta(p,K)$ with Ricci curvature bounded below by $-\Lambda$ and 
$\omega'\in \cK_\theta(p, K)$ satisfying
\begin{equation}\label{l1close}
\left\| (\omega')^n  -  \omega^n  \right\|_{L^1(X)} < \delta,
\end{equation}
we have
\begin{equation}\label{stab1}
d_{\omega'}(x, y) < d_{\omega}(x, y) + \epsilon
\end{equation}
for all $x,y\in X$, where $d_\omega$ and $d_{\omega'}$ are distance functions on $X$ induced by $\omega$ and $\omega'$.

\end{theorem}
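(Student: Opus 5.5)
The plan is to compare $d_{\omega'}$ with $d_\omega$ along $\omega$-geodesics, using $L^1$-closeness of volume forms to show that $\omega'$-lengths of most nearby curves are controlled, and then to pass from "most curves" to all pairs of points by an averaging (segment-inequality) argument on the $\omega$-side. The $\omega$-side has Ricci bounded below and, by \cite{GPSS1, GPSS2}, uniform diameter bounds and volume non-collapsing for $\cK_\theta(p,K)$. These give Cheeger--Colding type tools (Bishop--Gromov, the segment inequality) with constants depending only on $X,\theta,p,K,\Lambda$.

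\textbf{Step 1: smallness of $\tr_\omega\omega'$ in an averaged sense.} Write $\omega'=\omega+\ddbar\psi$ with $\psi=\varphi'-\varphi$. By Ko\l odziej's stability theorem, $\|\psi\|_{L^\infty}<\eta(\delta)$ with $\eta(\delta)\to 0$ as $\delta\to0$. I would show that $\int_X (\tr_\omega\omega')^{1/2}\omega^n$, or a similar quantity, is close to what it would be for $\omega'=\omega$. The intended approach is as follows.
\begin{itemize}
\item Use the pointwise inequality $\tr_\omega\omega' \ge n (\omega'^n/\omega^n)^{1/n}$.
\item Use $\int \tr_\omega\omega'\,\omega^n = n\int\omega^n$, since the classes agree.
\item Combine these with the $L^1$-closeness $\int|\omega'^n/\omega^n-1|\omega^n<\delta$.
\end{itemize}
AM--GM then becomes almost an equality on average. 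A stability version of AM--GM gives that the eigenvalues of $\omega'$ relative to $\omega$ are close to $1$ outside a set $E$ of small $\omega^n$-measure, while $\tr_\omega\omega'$ stays bounded in $L^1(\omega^n)$. In particular $\int_X (|\omega'|_\omega - \sqrt{n})_+ \omega^n$ becomes small. More robustly, $\int_X h\,\omega^n$ is small, where $h=\sqrt{\tr_\omega\omega'}$ restricted to where it exceeds $\sqrt{n}(1+\epsilon')$ (or the excess part of it), and $h$ dominates the local stretching factor $|v|_{\omega'}/|v|_\omega-(1+\epsilon')$ for any unit vector $v$.

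\textbf{Step 2: segment inequality.} Let $B_1,B_2$ be small $d_\omega$-balls of radius $r$ around $x,y$. By the Cheeger--Colding segment inequality for $\omega$ (valid with Ricci $\ge-\Lambda$ and the diameter and volume bounds),
\[
\int_{B_1\times B_2}\int_0^{d_\omega(a,b)} h(\gamma_{ab}(s))\,ds \le C(r)\int_X h\,\omega^n .
\]
Hence there exist $a\in B_1$, $b\in B_2$ whose $\omega$-geodesic $\gamma_{ab}$ satisfies $L_{\omega'}(\gamma_{ab})\le (1+\epsilon')d_\omega(a,b)+C(r)\int_X h\,\omega^n/\Vol(B_1)\Vol(B_2)$. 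The denominators are bounded below by non-collapsing.

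\textbf{Step 3: handling the endpoints.} It remains to bound $d_{\omega'}(x,a)$ and $d_{\omega'}(y,b)$ by something small when $d_\omega(x,a)<r$. This needs a uniform statement: $\omega'$-distance is small between points that are $\omega$-close. \cite{GPSS1, GPSS2} gives uniform Hölder-type control of $d_{\omega'}$ against $d_\theta$ for the whole class $\cK_\theta(p,K)$. Combining it with a uniform comparison of $d_\theta$ against $d_\omega$ (the identity map is uniformly continuous, by Gromov compactness of $\omega$ under the Ricci lower bound together with the GPSS estimates), I would obtain a modulus $\rho$ with $d_{\omega'}(x,a)\le\rho(d_\omega(x,a))$. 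Choosing first $r$ so that $2\rho(r)+C\epsilon'<\epsilon/2$, and then $\delta$ small, finishes the proof.

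\textbf{Main obstacle.} I expect Step 1 to be the main obstacle. $L^1$-closeness of volume forms and closeness of potentials do not give pointwise control of $\omega'$, and the trace identity only gives average near-equality. One must check that the exceptional set, where $\omega'$ stretches badly, has small $h$-weighted mass, rather than merely small measure, uniformly. If the AM--GM stability argument is insufficient, I would instead integrate by parts against the small $\psi$. For a cutoff, $\int \chi\,\ddbar\psi\wedge\omega^{n-1}$ is $O(\|\psi\|_\infty)$ once one has $L^1$ control of $\Delta_\omega\chi$ under the Ricci lower bound. That would give smallness of $\int(\tr_\omega\omega'-n)_+$ locally.
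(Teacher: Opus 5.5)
Your overall architecture matches the paper's. You obtain an integrated deviation estimate from AM--GM, the trace identity and $L^1$-closeness; you apply the Cheeger--Colding segment inequality for $\omega$; and you control endpoints with the uniform H\"older bound \eqref{lhold} plus Bishop--Gromov non-collapsing. The paper's contradiction via $B_\omega(x,\rho)\times B_\omega(y,\rho)\subset\cB^-(\eta/2)$ is equivalent to your averaging.

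\textbf{Step 1 has a genuine gap,} located exactly at what you call the main obstacle. Your weight $h$, the excess of $\sqrt{\tr_\omega\omega'}$ over $\sqrt n(1+\epsilon')$, does not dominate the stretching, because the trace does not control the largest eigenvalue. If the eigenvalues of $\omega'$ relative to $\omega$ are $(n-(n-1)\tau,\tau,\dots,\tau)$, then $\tr_\omega\omega'=n$ and $h=0$, yet unit vectors are stretched by almost $\sqrt n$.

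Your fallback has the same defect, since it again only controls $\int(\tr_\omega\omega'-n)_+$. That quantity is small for free anyway. Write $S=\tr_\omega\omega'$, $P=(\omega')^n/\omega^n$ and $\mu=\omega^n$. Then $\int(S-n)_+\,d\mu=\int(S-n)_-\,d\mu$, and AM--GM on $\{S\le n\}$ gives $(S-n)_-\le n|P-1|$. Hence $\int(S-n)_+\,d\mu\le n\|(\omega')^n-\omega^n\|_{L^1(X)}$, with no integration by parts against $\psi$.

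The missing idea is a weight that uses the determinant to control every eigenvalue. The paper takes $E=\sum_j|\lambda_j-1|$ and proves the pointwise bound $E\le C_n\bigl((S-n)_++\sqrt{(S-n)_++|P-1|}\bigr)$ (Lemma \ref{ailemma}). Integrating gives $\int_XE\,d\mu\le C(q+q^{1/2})$ with $q=\|(\omega')^n-\omega^n\|_{L^1(X)}$. It then uses $|\dot\gamma|_{\omega'}\le\sqrt{1+E}\le1+E$ along unit-speed $\omega$-geodesics.

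\textbf{Your own ingredients also close this gap} if you assemble them differently. Let $\mathcal N$ be the set where some $|\lambda_j-1|>\epsilon'$.
Markov's inequality for $(S-n)_+$ and $|P-1|$, together with the qualitative stability of AM--GM on the compact set $\{\lambda_j\ge0,\ S\le n+1\}$, gives $\mu(\mathcal N)\to0$ as $\delta\to0$, uniformly in $\omega,\omega'$.
Pointwise, $|v|_{\omega'}\le\bigl(1+\epsilon'+\sqrt S\,\mathbf 1_{\mathcal N}\bigr)|v|_\omega$.
Cauchy--Schwarz with $\int_XS\,d\mu=n$ gives $\int_{\mathcal N}\sqrt S\,d\mu\le(n\,\mu(\mathcal N))^{1/2}$.
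So $h=\sqrt S\,\mathbf 1_{\mathcal N}$ works in Step 2. This route avoids the square-root rate of Lemma \ref{ailemma}, at the cost of a non-explicit $\delta$.

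\textbf{Step 3 has a second, smaller gap.} Gromov compactness under a Ricci lower bound does not by itself make the identity $(X,d_\omega)\to(X,d_\theta)$ uniformly continuous, since a limit could a priori identify $d_\theta$-separated points. The paper obtains this from the Chern--Lu inequality $\Delta_\omega\log\tr_\omega\theta\ge-C_0\tr_\omega\theta-\Lambda$. It applies the maximum principle to $\log\tr_\omega\theta-A\varphi$, using Ko\l odziej's $L^\infty$ bound, to get $\omega\ge c\theta$. Hence $d_\theta\le c^{-1/2}d_\omega$, and with \eqref{lhold}, $d_{\omega'}\le A_0d_\omega^\beta$.
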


The estimate (\ref{stab1}) can be viewed as semi-stability for the distance in the sense that the distance function $d_\omega$ is upper semi-continuous in the space $\cK_\theta(p, K)$ with respect to $L^1(X)$. Theorem \ref{thm:main1} is both natural and surprising. One certainly does not expect any second order estimate to control $\omega'$ from above if only assuming $L^p$ and $L^1$ bounds on $(\omega')^n$. On the other hand, it is always an intriguing problem to connect  K\"ahler metrics or currents to distance functions in a quantitative way without many techniques available. One of the key ingredients in the proof of Theorem \ref{thm:main1} is the segment inequality developed in \cite{CC1}, which turns out to be a bridging technique between geometric distance and plurisubharmonic functions.

To improve the semi-stability Theorem \ref{thm:main1}, we will have to impose a uniform Ricci curvature lower bound on the space of $\cK_\theta(p,K)$.

\begin{corollary}\label{cor:main1} For any $\epsilon>0$ and $\Lambda>0$, there exists $\delta =\delta(X, \theta, p, K, \Lambda, \epsilon)>0$  such that for any $\omega, \omega'\in \cK_\theta(p, K)$ with Ricci curvature bounded below by $-\Lambda$ and 
$$ 
\left\| \omega^n  -  (\omega')^n  \right\|_{L^1(X)} < \delta,
$$
we have 
\begin{equation}
\sup_{x,y\in X} \left|d_{\omega}(x, y) -  d_{\omega'}(x, y) \right| <  \epsilon.
\end{equation}
 In particular, 
\begin{equation}\label{ghclose}
\dgh((X, \omega), (X, \omega')) < \epsilon,
\end{equation}
 where $\dgh$ is the Gromov-Hausdorff distance between metric spaces. 

\end{corollary}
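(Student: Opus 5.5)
The corollary follows by applying Theorem \ref{thm:main1} twice, with the roles of $\omega$ and $\omega'$ interchanged. Fix $\epsilon>0$ and $\Lambda>0$, and let $\delta=\delta(X,\theta,p,K,\Lambda,\epsilon)>0$ be the constant given by Theorem \ref{thm:main1}.

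First take $\omega,\omega'\in\cK_\theta(p,K)$, both with Ricci curvature bounded below by $-\Lambda$, and with $\|\omega^n-(\omega')^n\|_{L^1(X)}<\delta$. In Theorem \ref{thm:main1} only the metric playing the role of $\omega$ needs the Ricci lower bound. So applying the theorem to the pair $(\omega,\omega')$ gives
$$d_{\omega'}(x,y)<d_\omega(x,y)+\epsilon \quad \text{for all } x,y\in X.$$
Now $\omega'$ also has $\ric(\omega')\geq -\Lambda$, both metrics lie in $\cK_\theta(p,K)$, and the $L^1$ condition is symmetric in the two metrics. We may therefore apply Theorem \ref{thm:main1} again with $\omega'$ in the role of $\omega$, with the same $\delta$. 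This gives
$$d_\omega(x,y)<d_{\omega'}(x,y)+\epsilon \quad \text{for all } x,y\in X.$$
The two inequalities together give $|d_\omega(x,y)-d_{\omega'}(x,y)|<\epsilon$ pointwise. Since $X$ is compact and both distance functions are continuous, the supremum over $X\times X$ is attained. Hence it is strictly less than $\epsilon$, which is the first claim.

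For the Gromov--Hausdorff estimate \eqref{ghclose}, take the identity map $\mathrm{id}\colon (X,d_\omega)\to(X,d_{\omega'})$, which is surjective. Its distortion is $\sup_{x,y}|d_\omega(x,y)-d_{\omega'}(x,y)|<\epsilon$. The standard bound $\dgh\le \tfrac12\,\mathrm{dis}$ for correspondences then gives $\dgh((X,\omega),(X,\omega'))<\epsilon/2<\epsilon$.

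None of these steps is a real obstacle; all the substance lies in Theorem \ref{thm:main1}. The only point needing care is to check that the constant $\delta$ from Theorem \ref{thm:main1} depends only on $(X,\theta,p,K,\Lambda,\epsilon)$ and not on the particular metrics. That is what allows the same $\delta$ to serve for both orderings of the pair.
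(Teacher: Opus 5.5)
Your proposal is correct and matches the paper's proof. The paper applies the one-sided estimate (Lemma~\ref{lemformain1}, the core of Theorem~\ref{thm:main1}) to both orderings with the same uniform constant and uses the identity correspondence for $\dgh$; the only difference is that it takes $\eta=\epsilon/2$ to get $\sup|d_\omega-d_{\omega'}|\le\epsilon/2<\epsilon$ directly, whereas you obtain the strict inequality from compactness and continuity, which is equally valid.
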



We encode a uniform lower Ricci bound by imposing a PSH condition on
the logarithmic volume ratio
$$f=-\log \frac{\omega^n}{\theta^n} .$$  
More precisely, we define 
\begin{equation}\label{pklspace}
\cK_\theta(p,K; \lambda)
\end{equation}
to be the set of $\omega\in \cK_\theta(p, K)$ satisfying 
\begin{equation}
-\log\frac{\omega^n}{\theta^n} \in \PSH(X, \lambda\theta). 
\end{equation}
We can rephrase the additional PSH-condition in terms of complex Monge-Amp\`ere equation by
$$\omega^n = e^{-f}\theta^n, ~f \in \PSH(X, \lambda\theta).$$
In fact, the Schwarz estimate shows that every
$\omega\in\cK_\theta(p,K;\lambda)$ satisfies $\omega\geq c\theta$ for some $c=c(X, \theta,p,K,\lambda)>0$ and hence
$\ric(\omega)\geq-\Lambda\omega$ for some $\Lambda=\Lambda(X, \theta, p, K, \lambda)$.

For each  K\"ahler metric $\omega\in \cK_\theta(p,K;\lambda)$, it induces a metric space $(X, \omega)$ and the Gromov-Hausdorff distance induces a topological structure on $\cK_\theta(p, K;\lambda)$. Since $\omega\in \cK_\theta(p,K;\lambda)$ has uniform lower Ricci bound and diameter bound by the work in \cite{GPSS1, GPSS2, GPSS3}, $\cK_\theta(p, K;\lambda)$ can be compactified as a compact space 
$$\overline{\cK_\theta(p,K; \lambda)}^{\dgh}$$ by the Gromov-Hausdorff topology.  On the other hand, if we let 
\begin{equation}\label{pshpk}
\PSH_{\theta, p,K}(X, \lambda \theta) =\{ f\in \PSH(X, \lambda\theta):~ \| e^{-f} \|_{L^p(X, \theta^n)}  \leq K, ~\int_X e^f\theta^n =\int_X\theta^n\}
\end{equation}
each  $f \in \PSH_{\theta, p,K}(X, \lambda \theta)$ induces, by Yau's solution to the Calabi conjecture \cite{Y}, a unique smooth
K\"ahler metric in $[\theta]$.  For non-smooth $f$, the corresponding  
solution is instead a positive current with bounded potential.  Thus, the comparison with $\PSH_{p,K}(X,\lambda\theta)/\mathbb R$ is always
understood on this normalized slice.  The $L^1$ topology is the
natural topology on the normalized quasi-plurisubharmonic data.
Corollary \ref{cor:main1} leads to the equivalence  between the Gromov-Hausdorff topology in $\overline{\cK_\theta(p, K)}^{\dgh}$ and  the $L^1$-topology of $\overline{\PSH_{\theta, p,K}(X, \lambda \theta)}^{L^1}$ modulo the automorphism group.  

Before we apply Theorem \ref{thm:main1} and Corollary \ref{cor:main1} to metric structures for singular  K\"ahler currents, let us discuss their potential extensions in the case when $X$ is a singular  K\"ahler space. In the case a $\mathbb{Q}$-Gorestein normal  K\"ahler space $X$ with klt singularities, we have to replace the volume form $\theta^n$ by an adapted volume measure $\Omega$. We will then replace $\cK_\theta(p,K;\lambda)$ by $\cK_{\theta, \Omega}(p,K;\lambda)$, the set of positive currents $\omega\in [\theta]$ with bounded local potentials satisfying
$$\| \frac{\omega^n}{\Omega}\|_{L^p(X, \Omega}\leq K, ~ -\log\frac{\omega^n}{\Omega} \in {\rm PSH}(X, \lambda\theta).$$

\begin{conjecture} Let $(X, \theta)$ be a compact normal  K\"ahler space with klt singularities equipped with a smooth  K\"ahler metric and an adapted volume measure $\Omega$. Then 
\begin{enumerate}
    \item Each  K\"ahler current $\omega\in \cK_\theta(p,K;\lambda)$ induces an RCD structure on $X$,
\medskip
    \item for any $\epsilon>0$, there exists $\delta>0$ such that for any $\omega, \omega'\in \cK_{\theta, \Omega}(p,K)$ with $$\| \omega^n - (\omega')^n|_{L^1(X)}< \delta,$$
we have 
$$\sup_{x,y\in X} |d_{\omega'}(x, y) - d_{\omega}(x,y)| < \epsilon.$$

\end{enumerate}   
\end{conjecture}
The conjecture holds in complex dimension in complex dimension 3 by combining the proof of Theorem \ref{thm:main1} with the RCD structure theory for 3-dimensional klt  K\"ahler spaces in \cite{FGS2} and the Holder continuity for  K\"ahler potentials on  K\"ahler RCD spaces in \cite{GKSS}.
\medskip

\subsection{Metric structures induced by  K\"ahler currents.~}

Let $\omega=\theta+\ddbar\varphi$ be a closed positive
$(1,1)$-current with
$\varphi\in\PSH(X,\theta)\cap L^\infty(X)$. If its Monge-Amp\`ere
volume measure satisfies
\begin{equation}\label{dbpsh}
 \left\|\frac{\omega^n}{\theta^n} \right\|_{L^p(X, \theta^n)}\leq K, ~ -\log \frac{\omega^n}{\theta^n} \in \PSH(X, \lambda \theta), 
\end{equation}
for some $p>1$,  the Ricci current of $\omega$ satisfies
$$\ric(\omega)\geq \ric(\theta) - \lambda \theta. $$
The approximation and Schwarz lemma in the proof of Theorem
\ref{thm:main2} also gives $\omega\geq c\theta$ as currents. Thus
$\omega$ is a K\"ahler current and
$$\ric(\omega) \geq - \Lambda\omega$$
for some $\Lambda=\Lambda(X,\theta, p, K,\lambda)>0$.

We define 
\begin{equation}\label{barkp} \overline{\cK_\theta(p,K;\lambda})
\end{equation}
to be the set of currents $\omega=\theta+\ddbar\varphi$ with
$\varphi\in\PSH(X,\theta)\cap L^\infty(X)$ satisfying
(\ref{dbpsh}). We will see that
$\overline{\cK_\theta(p,K;\lambda)}$ is a natural Ricci-controlled closure of
$\cK_\theta(p,K;\lambda)$.

If $\omega$ is a smooth  K\"ahler metric, then obviously $(X, \omega)$ induces a unique smooth metric space. What happens if $\omega$ is merely a current, which could be non-smooth everywhere?  

\begin{question} \label{question1} Does every $\omega \in \overline{\cK_\theta(p,K;\lambda)}$ induce a unique metric structure on $X$? 
    
\end{question}

The natural approach is the following regularization for the non-smooth $\omega$. By
B\l ocki-Ko\l odziej \cite{BK}, one can choose smooth functions
\[
 f_j\in\PSH(X,\lambda\theta),\qquad
 f_j\searrow-\log\frac{\omega^n}{\theta^n},
\]
and then solve for $\omega_j\in[\omega]$ satisfying
$$(\omega_j)^n = e^{-f_j+\epsilon_j} \theta^n$$
or equivalently
$$\ric(\omega_j) = \ric(\theta) + \ddbar f_j\geq - \Lambda\omega_j, $$
for some uniform $\Lambda>0$, where $\epsilon_j$ is the normalization
constant and $\epsilon_j\to0$.
The $\omega_j$ are smooth K\"ahler metrics with a uniform diameter
bound by \cite{GPSS1, GPSS2, GPSS3} and a uniform Ricci lower bound, so subsequential measured
Gromov-Hausdorff limits exist. Theorem \ref{thm:main1} will show that
the distance functions are uniformly Cauchy which leads to identifying the topology of the limit with
that of $X$.

\begin{theorem} \label{thm:main2} Let $(X, \theta)$ be a compact
K\"ahler manifold of dimension $n$. For
every $\omega\in \overline{\cK_\theta(p,K;\lambda)}$, there is a canonical
distance $d_\omega$ on $X$ such that
\begin{equation}\label{main2-rcd}
 (X,d_\omega, \omega^n)
 \quad\text{is a compact }\RCD(-\Lambda,2n)\text{ space}
\end{equation}
for some $\Lambda=\Lambda(X,\theta,p,K,\lambda)\geq 0$. Furthermore, the identity map
from $(X,d_\theta)$ to $(X,d_\omega)$ is a Lipschitz homeomorphism whose inverse is Holder continuous. 
    
\end{theorem}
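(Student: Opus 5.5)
We regularize $\omega$ as described just before the statement. Write $f=-\log(\omega^n/\theta^n)\in\PSH(X,\lambda\theta)$. By B\l ocki-Ko\l odziej \cite{BK}, choose smooth $f_j\in\PSH(X,\lambda\theta)$ with $f_j\searrow f$. Since $f_j\ge f$, we have $e^{-f_j}\le e^{-f}$, so the $L^p$ bound is preserved. By monotone convergence, $e^{-f_j}\to e^{-f}$ in $L^1$, so the normalizing constants satisfy $\epsilon_j\to0$. Solve $\omega_j^n=e^{-f_j+\epsilon_j}\theta^n$ with $\omega_j\in[\theta]$ using Yau \cite{Y}. Then $\omega_j\in\cK_\theta(p,K';\lambda)$ for a slightly larger $K'$. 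The Schwarz lemma (Chern--Lu applied to the identity $(X,\omega_j)\to(X,\theta)$) combined with Ko\l odziej's uniform $L^\infty$ bound should give $\omega_j\ge c\theta$ uniformly. Hence $\ric(\omega_j)\ge-\Lambda\omega_j$ uniformly, and $d_{\omega_j}\ge \sqrt{c}\,d_\theta$. Since $\|\omega_j^n-\omega_k^n\|_{L^1}\to0$, Corollary \ref{cor:main1} shows that $d_{\omega_j}$ is uniformly Cauchy on $X\times X$. We define $d_\omega=\lim_j d_{\omega_j}$. It is canonical because any two such approximating sequences can be interlaced and still satisfy the hypotheses of Corollary \ref{cor:main1}; this also gives independence of the choice of $f_j$.

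Next we check metric and topological properties. The uniform lower bound $d_{\omega_j}\ge\sqrt c\,d_\theta$ passes to the limit, so $d_\omega$ is a genuine distance and $d_\theta\le C d_\omega$. This gives Lipschitz continuity of the identity map $(X,d_\omega)\to(X,d_\theta)$. For the other direction, we need a uniform H\"older bound $d_{\omega_j}(x,y)\le C d_\theta(x,y)^\alpha$. I would obtain it from the techniques of \cite{GPSS1,GPSS2,GPSS3}: they give uniform diameter bounds and, via Green's function estimates, a uniform H\"older-type modulus $d_{\omega_j}\le C d_\theta^\alpha$ for $\omega_j$ with $L^p$ volume density. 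Passing to the limit, the identity map is a bi-H\"older/Lipschitz homeomorphism, and in particular $(X,d_\omega)$ is compact and homeomorphic to $X$. We then match the statement's orientation: the Lipschitz direction comes from the lower bound on $\omega$, and the H\"older direction comes from the upper distance estimate. This identification of the two directions should be double-checked.

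For the RCD property, note that $(X,d_{\omega_j},\omega_j^n)$ is a smooth $\RCD(-\Lambda,2n)$ space with uniformly bounded diameter and volume $\int_X\theta^n$. By uniform convergence of the distances on the fixed set $X$, $(X,d_{\omega_j})\to(X,d_\omega)$ in the Gromov--Hausdorff sense, with the identity maps as $\epsilon$-isometries. Moreover, $\omega_j^n=e^{-f_j+\epsilon_j}\theta^n\to\omega^n$ weakly, in fact in $L^1(\theta^n)$. Therefore the triples converge in the measured Gromov--Hausdorff sense, and the stability of the RCD condition under mGH convergence (Ambrosio--Gigli--Savar\'e, Erbar--Kuwada--Sturm) gives that $(X,d_\omega,\omega^n)$ is $\RCD(-\Lambda,2n)$.

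The main obstacle is the uniform H\"older upper bound for $d_{\omega_j}$ in terms of $d_\theta$. This is where the $L^p$ volume density and Ricci lower bound must be converted into a geometric modulus of continuity. If the results of \cite{GPSS1,GPSS2,GPSS3} do not directly give it, I would try the following instead. Apply the segment-inequality argument underlying Theorem \ref{thm:main1} to small balls: bound the $\omega_j$-length of short curves by integrals of $\tr_\theta\omega_j$. Then use the $L^\infty$ bounds on the potentials, via $\int_{B}\tr_\theta\omega_j\,\theta^n$ on $d_\theta$-balls, to obtain the power of $d_\theta$.
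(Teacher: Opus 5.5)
Your proposal is correct and follows the paper's proof essentially step for step: B\l ocki--Ko\l odziej regularization, the Schwarz lemma giving $\omega_j\geq c\theta$ and a uniform Ricci lower bound, the uniform Cauchy property from Theorem \ref{thm:main1} applied in both directions, independence of the regularization, and mGH stability of the RCD condition. The step you single out as the main obstacle is not one, because the paper simply quotes the uniform estimate \eqref{lhold} from \cite{Li}, which holds on all of $\cK_\theta(p,K)$ and rests on the H\"older bound \eqref{ahold} for the potentials; your fallback would need \eqref{ahold} too, since $L^\infty$ bounds on $\varphi_j$ alone only give $\int_{B_r}\omega_j\wedge\theta^{n-1}\lesssim r^{2n-2}$, which yields no decaying power of $r$.

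Your reading of the directions also matches what the paper actually proves in \eqref{limit-bi-holder}, namely $\sqrt c\,d_\theta\leq d_\omega\leq C d_\theta^\beta$. So the identity $(X,d_\omega)\to(X,d_\theta)$ is Lipschitz and its inverse is H\"older, and the statement's wording has the two directions swapped. The literal version would require $d_\omega\leq C d_\theta$, an upper bound on $\omega$ that is not available.
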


The distance $d_\omega$ is canonical in the sense that it is independent of every admissible smooth regularization having common
$L^p$ and Ricci lower bounds. The uniqueness of Theorem \ref{thm:main2} immediately extends Theorem \ref{thm:main1}.

\begin{corollary} \label{cor:main2} Let $(X, \theta)$ be a compact
K\"ahler manifold of complex dimension $n$. For any
$\omega\in \overline{\cK_\theta(p,K;\lambda)}$ and any $\epsilon>0$, there
exists
$\delta=\delta(X,\theta,n,p,K,\lambda,\epsilon)>0$ such that for any
$\omega'\in \cK_\theta(p, K)$ satisfying
$$ 
\left\| (\omega')^n  -  \omega^n  \right\|_{L^1(X)} < \delta,
$$
we have
$$     %
d_{\omega'}(x, y) < d_{\omega}(x, y) + \epsilon
$$
for all $x,y\in X$.

\end{corollary}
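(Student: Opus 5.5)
We reduce Corollary \ref{cor:main2} to Theorem \ref{thm:main1} by a regularization. The plan is to approximate the current $\omega$ by smooth metrics with uniform $L^p$ and Ricci bounds, apply the smooth semi-stability estimate to each approximant, and pass to the limit using the canonical distance $d_\omega$ from Theorem \ref{thm:main2}.

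\emph{Step 1: regularize.} Fix $\omega\in\overline{\cK_\theta(p,K;\lambda)}$ and set $f=-\log(\omega^n/\theta^n)\in\PSH(X,\lambda\theta)$. As in the discussion preceding Theorem \ref{thm:main2}, we use B\l ocki--Ko\l odziej \cite{BK} to choose smooth $f_j\in\PSH(X,\lambda\theta)$ with $f_j\searrow f$. By Yau's theorem we then solve $\omega_j^n=e^{-f_j+\epsilon_j}\theta^n$ with $\omega_j\in[\theta]$ and $\epsilon_j\to0$.
\begin{itemize}
\item Since $e^{-f_j}\le e^{-f}$, the $L^p$ norms of $e^{-f_j+\epsilon_j}$ are bounded by $K e^{\epsilon_j}\le 2K$ for $j$ large. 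Hence $\omega_j\in\cK_\theta(p,2K;\lambda)$.
\item By the Schwarz estimate, $\ric(\omega_j)\ge-\Lambda\omega_j$ with $\Lambda=\Lambda(X,\theta,p,K,\lambda)$ independent of $j$.
\item By monotone (dominated) convergence, $\|\omega_j^n-\omega^n\|_{L^1}\to0$.
\end{itemize}

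\emph{Step 2: apply the smooth estimate.} Let $\delta_0=\delta(X,\theta,p,2K,\Lambda,\epsilon/2)$ be the constant of Theorem \ref{thm:main1}; note that $\cK_\theta(p,K)\subset\cK_\theta(p,2K)$. Set $\delta=\delta_0/2$. Given $\omega'\in\cK_\theta(p,K)$ with $\|(\omega')^n-\omega^n\|_{L^1}<\delta$, pick $j$ large so that $\|\omega_j^n-\omega^n\|_{L^1}<\delta_0/2$. The triangle inequality gives $\|(\omega')^n-\omega_j^n\|_{L^1}<\delta_0$. Applying Theorem \ref{thm:main1} with $\omega_j$ in the role of the metric with Ricci lower bound yields
\begin{equation*}
d_{\omega'}(x,y)<d_{\omega_j}(x,y)+\epsilon/2\quad\text{for all }x,y\in X.
\end{equation*}

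\emph{Step 3: pass to the limit.} We need $d_{\omega_j}\to d_\omega$ uniformly on $X\times X$, which is the content of the canonical construction in Theorem \ref{thm:main2}: $d_\omega$ is defined as this uniform limit and is independent of the admissible regularization. Granting this, for large $j$ we have $\sup|d_{\omega_j}-d_\omega|<\epsilon/2$, and the claimed inequality $d_{\omega'}<d_\omega+\epsilon$ follows.

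The main obstacle is this uniform convergence. If it is not directly available from the construction, we would obtain it from Corollary \ref{cor:main1}. The $\omega_j$ share the Ricci bound $-\Lambda$ and $L^p$ bound $2K$, and $\omega_j^n$ is $L^1$-Cauchy, so Corollary \ref{cor:main1} makes $d_{\omega_j}$ uniformly Cauchy. The limit is then a continuous pseudo-distance, and it is a genuine distance inducing the topology of $X$ by the Lipschitz/H\"older comparison with $d_\theta$ in Theorem \ref{thm:main2}.

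Finally, the constant $\delta$ depends only on $(X,\theta,p,K,\lambda,\epsilon)$: $\Lambda$ depends only on $(X,\theta,p,K,\lambda)$, and the choice of $j$ is used only for the fixed $\omega$ and does not enter $\delta$.
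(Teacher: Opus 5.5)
Your proposal is correct and follows the paper's proof essentially step for step. You regularize $\omega$ by the B\l ocki--Ko\l odziej/Yau approximants $\omega_j$ with uniform $L^p$ and Ricci lower bounds, then apply Theorem~\ref{thm:main1} with $\omega_j$ as the reference metric, taking $\delta=\delta_0/2$ and using the $L^1$ triangle inequality, and finally absorb $\sup|d_{\omega_j}-d_\omega|$ via the uniform convergence of Lemma~\ref{lem:canonical-distance}; the differences from the paper (splitting $\epsilon$ into halves rather than thirds, and your fallback via Corollary~\ref{cor:main1}, which is exactly how the paper proves Lemma~\ref{lem:canonical-distance}) are cosmetic.
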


Corollary \ref{cor:main1} can be similarly extended for $\omega, \omega'\in \overline{\cK_\theta(p,K;\lambda)}.$
 
We will now apply Theorem \ref{thm:main2} to a special family of  K\"ahler manifolds. Suppose $-K_X$ is pseudoeffective and klt in the sense of Definition
\ref{psklt}. Then, for any K\"ahler class $\alpha$ on $X$, there exists a
K\"ahler current $\omega\in\alpha$ such that $(X,\omega)$ induces a
unique $\RCD(0,2n)$ space homeomorphic to $X$.

\begin{theorem}\label{thm:main2'}
Let $(X,\theta)$ be a compact $n$-dimensional K\"ahler manifold.
If $-K_X$ is pseudoeffective and klt, there exist $p>1$,
$K>0$, $\lambda>0$, and a K\"ahler current
$\omega\in\overline{\cK(p,K;\lambda)}$ with non-negative Ricci
current such that $(X,\omega)$ induces a unique $\RCD(0,2n)$ space
homeomorphic to $X$.
\end{theorem}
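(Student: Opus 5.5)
The strategy is to reduce Theorem \ref{thm:main2'} to Theorem \ref{thm:main2}. Concretely, I will build a Kähler current $\omega$ in the given class $\alpha=[\theta]$ (after rescaling $\theta$ if necessary) and check three things: $\omega$ lies in $\overline{\cK_\theta(p,K;\lambda)}$ for suitable $p,K,\lambda$; its Ricci current is nonnegative; and the constant $\Lambda$ in (\ref{main2-rcd}) can be taken to be $0$.

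\textbf{Step 1: choosing the volume form.} Since $-K_X$ is pseudoeffective and klt (Definition \ref{psklt}), there is a singular hermitian metric $h$ on $-K_X$ with $\sqrt{-1}\Theta_h\geq 0$ and $e^{-\phi_h}\in L^{p_0}$ for some $p_0>1$. Equivalently, there is a volume form $\mu=e^{-f}\theta^n$ with
\[
\ric(\theta)+\ddbar f\geq 0 \quad\text{as currents}.
\]
Here $f$ is quasi-psh, with $f\in\PSH(X,\lambda\theta)$ once $\lambda$ dominates $\ric(\theta)$ from below. The klt condition gives $e^{-f}\in L^{p}(\theta^n)$ for some $p>1$.

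\textbf{Step 2: solving the Monge-Ampère equation.} Normalize $f$ by adding a constant so that $\int_X e^{-f}\theta^n=\int_X\theta^n$. Then solve
\[
(\theta+\ddbar\varphi)^n=e^{-f}\theta^n.
\]
By Kołodziej, there is a unique bounded solution $\varphi\in\PSH(X,\theta)\cap L^\infty$. Set $\omega=\theta+\ddbar\varphi$. This $\omega$ satisfies (\ref{dbpsh}) with $K=\|e^{-f}\|_{L^p}$. Formally its Ricci current is
\[
\ric(\omega)=\ric(\theta)+\ddbar f\geq 0.
\]
Theorem \ref{thm:main2} then yields a canonical distance $d_\omega$ such that $(X,d_\omega,\omega^n)$ is $\RCD(-\Lambda,2n)$ and homeomorphic to $X$.

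\textbf{Step 3: improving $-\Lambda$ to $0$.} This is the main obstacle. I would run the regularization behind Theorem \ref{thm:main2} with care. By Demailly or Błocki–Kołodziej regularization of the psh weight $\phi_h$ on $-K_X$, we only get approximations $f_j\searrow f$ with
\[
\ric(\theta)+\ddbar f_j\geq -\epsilon_j\theta,\qquad \epsilon_j\to 0.
\]
We cannot get $f_j$ that stay nonnegatively curved. The smooth metrics $\omega_j$ solving $\omega_j^n=e^{-f_j+c_j}\theta^n$ then satisfy
\[
\ric(\omega_j)\geq -\epsilon_j\theta\geq -\epsilon_j C\,\omega_j,
\]
using the uniform bound $\omega_j\geq c\theta$ from the Schwarz lemma. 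The constant $c$ is uniform because the $f_j$ lie in a fixed $\PSH(X,\lambda\theta)$ and have uniform $L^p$ bounds. So $(X,\omega_j)$ is $\RCD(-C\epsilon_j,2n)$.

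Theorem \ref{thm:main1}, applied as in Theorem \ref{thm:main2}, shows that $d_{\omega_j}\to d_\omega$ uniformly on $X\times X$. The measures also converge: $\omega_j^n\to\omega^n$ in $L^1$. By stability of the RCD condition under measured Gromov–Hausdorff convergence, with curvature bounds $-C\epsilon_j\to0$, the limit $(X,d_\omega,\omega^n)$ is $\RCD(0,2n)$. Uniqueness comes from the canonical nature of $d_\omega$ in Theorem \ref{thm:main2}. The homeomorphism to $X$ comes from the Lipschitz/Hölder comparison with $d_\theta$ stated there.

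The delicate points are twofold. First, the uniform Schwarz lower bound must hold along a regularization that only has almost-nonnegative Ricci curvature. Second, one must confirm that Theorem \ref{thm:main2} allows such $\epsilon_j$-perturbed regularizations, i.e. that its canonicity covers approximations with common $L^p$ bounds and Ricci lower bounds $-\Lambda$ tending to $0$.
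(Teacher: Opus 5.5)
\textbf{Gap in Step 3.} Your Steps 1 and 2 agree with the paper's construction, with one caveat. Definition \ref{psklt} only gives $e^{-f}\in L^1$; getting $e^{-f}\in L^p$ for some $p>1$ needs the strong openness theorem \cite{GZ}, which you use without saying so.

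The real problem is Step 3. You assume \emph{smooth} $f_j$ with $\ric(\theta)+\ddbar f_j\geq-\epsilon_j\theta$ and $\epsilon_j\to0$. Such a sequence can exist only if $-K_X$ is nef, because $\ric(\theta)+\ddbar f_j+2\epsilon_j\theta\geq\epsilon_j\theta$ is then a K\"ahler form in $c_1(X)+2\epsilon_j[\theta]$. The hypothesis is only pseudoeffective plus klt, and this allows non-nef examples:
\begin{itemize}
\item On the Hirzebruch surface $F_m$ with $m\geq 3$, $-K=2C_0+(m+2)\ell$ is big, where $\ell$ is the fiber class.
\item $\frac{m-2}{m}[C_0]$ plus a smooth semipositive form is a positive current in $c_1(X)$. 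Its weight behaves like $|s_{C_0}|^{-2(m-2)/m}$, which is integrable, so the class is klt.
\item Nevertheless $-K\cdot C_0=2-m<0$, so $-K$ is not nef.
\end{itemize}
The tools you name do not help here. B\l ocki--Ko\l odziej only gives smooth $f_j\in\PSH(X,\lambda\theta)$ with a fixed $\lambda$, hence a fixed bound $\ric(\omega_j)\geq-\Lambda\omega_j$. No smooth regularization can improve this to $-C\epsilon_j\omega_j$. So your limiting argument yields only $\RCD(-\Lambda,2n)$, and as written it covers at most the nef case.

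The canonicity issue you flag is not the obstruction; Lemma \ref{lem:canonical-distance} already handles smooth regularizations with common bounds. What fails is that the smooth regularizations you need do not exist.

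\textbf{How the paper handles it.} The paper keeps analytic singularities. Demailly's Bergman-kernel regularization \cite{Dem92} gives quasi-psh $f_j\geq f$ with analytic singularities along $Z_j$ and $\ric(\theta)+\ddbar f_j\geq-\varepsilon_j\theta$, $\varepsilon_j\to0$. The corresponding solutions $\omega_j^n=e^{-f_j+b_j}\theta^n$ have bounded potentials but are smooth only on $U_j=X\setminus Z_j$. This forces three steps that are missing from your proposal:
\begin{enumerate}
\item[(i)] A uniform bound $\omega_j\geq c\theta$. The paper obtains it from the inner regularizations $M_k(f_j)$, the Schwarz lemma, and Ko\l odziej stability.
\item[(ii)] An application of Theorem \ref{thm:main2} to $\omega_j$ itself. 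This gives $\RCD(-\Lambda,2n)$ spaces $(X,d_j,\mu_j)$ whose distance is locally the Riemannian distance of $\omega_j$ on $U_j$.
\item[(iii)] The vanishing-capacity cutoffs of Lemma \ref{lem:analytic-set-cutoffs} combined with Honda's almost-smooth criterion \cite{Honda}. These carry the Bochner inequality across $Z_j$ and upgrade (ii) to $\RCD(-c^{-1}\varepsilon_j,2n)$.
\end{enumerate}
Only after these does the argument close. Uniform convergence $d_j\to d_\omega$ follows from Corollary \ref{cor:main1} applied to diagonal smooth regularizations of $\omega_j$ and $\omega$, and stability of the RCD condition then gives $\RCD(0,2n)$.
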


This permits one to
study the topology of $X$ using tools for spaces with non-negative
Ricci curvature, as in \cite{FGSW1, FGSW2}, even when $X$ carries no
smooth K\"ahler metric with non-negative Ricci curvature. Although
$X$ is smooth, the resulting RCD metric may have a dense singular
set. 

Theorem \ref{thm:main2} and Theorem \ref{thm:main2'} can also be modified to the following statement. Suppose $$\ric(\omega) = - \ddbar\log \omega^n \geq - \Lambda \omega$$ for some $\omega\in \overline{\cK_\theta(p,K;\lambda)}$. Then it induces a unique distance $d_\omega$ on $X$ such that $(X, d_\omega, \omega^n)$ is a compact ${\rm RCD} (-\Lambda, 2n)$-space.

Corollary \ref{thm:main1} and Theorem \ref{thm:main2} also give a new regularization with control of distance for  K\"ahler currents in $\overline{\cK_\theta(p, K;\lambda)}$. Suppose $\varphi \in \PSH(X, \theta)$ with $f= - \log  \frac{(\theta+\ddbar \varphi)^n}{\theta^n} \in \PSH_{\theta, p,K}(X, \lambda \theta)$ for some $p>1$ and $K, \lambda>0$. Then there exists a sequence of smooth $\varphi_j \in \PSH(X, \theta)$ satisfying 
%
%
%
\begin{equation}
  \lim_{j\rightarrow \infty} \|\varphi_j -\varphi\|_{L^\infty(X)}=0,
  \qquad \lim_{j\rightarrow \infty} d_{GH}((X, \omega_j), (X, \omega)) = 0, 
\end{equation}
where $\omega_j=\theta+ \ddbar \varphi_j$.


\medskip

\subsection{Almost non-inflation under the K\"ahler-Ricci flow.} ~
Let $(X,\theta)$ be an $n$-dimensional compact K\"ahler manifold. We
consider the K\"ahler-Ricci flow
\begin{equation}\label{krflow}
\left\{
\begin{array}{l}
{ \displaystyle \ddt{\omega(t)} = -\ric(\omega(t)) ,}\\
\\
\omega(0)=\omega_0, 
\end{array} \right. 
\end{equation}
The weak-flow theory of \cite{ST3,DNL} applies when $\omega_0$ has a
bounded potential and an $L^p$ Monge-Amp\`ere density, $p>1$. It
gives a unique solution $\omega(t)$ on $X\times[0,T)$ satisfying
\begin{enumerate}
    \item
    $T=\sup\{t>0:[\omega_0]-tc_1(X)\text{ is a K\"ahler class}\}$,
    
    \medskip
    
    \item $\omega(t)$ is a smooth family of K\"ahler metrics on
    $X\times(0,T)$,
    
    \medskip
    
    \item the K\"ahler potentials of $\omega(t)$ converge uniformly
    to the potential of $\omega_0$ as $t\to0^+$.
    
    \end{enumerate}

Theorem \ref{thm:main3} controls the possible inflation of the
distance functions in this smoothing process.

\begin{theorem} \label{thm:main3}
Let $(X,\theta)$ be a compact K\"ahler manifold, and let $\omega(t)$
be the unique weak solution of the K\"ahler-Ricci flow
\eqref{krflow} starting from
$\omega_0\in\overline{\cK_\theta(p,K;\lambda)}$, where $p>1$ and
$K,\lambda>0$. Then
\begin{equation}
    \limsup_{t\rightarrow 0^+} \sup_{x,y\in X} \left( d_{\omega(t)}(x,y) - d_{\omega_0}(x,y) \right) \leq 0, 
\end{equation}

\end{theorem}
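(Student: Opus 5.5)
Our plan is to reduce Theorem \ref{thm:main3} to Corollary \ref{cor:main2}, applied with $\omega=\omega_0\in\overline{\cK_\theta(p,K;\lambda)}$ and $\omega'=\omega(t)$ for small $t>0$. Corollary \ref{cor:main2} gives $d_{\omega(t)}(x,y)<d_{\omega_0}(x,y)+\epsilon$ for all $x,y$ as soon as $\omega(t)$ lies in some fixed class $\cK_\theta(p',K')$ and $\|\omega(t)^n-\omega_0^n\|_{L^1(X)}<\delta(\epsilon)$. The proof therefore splits into two tasks: a uniform $L^{p'}$ bound on the volume densities of $\omega(t)$ for $t\in(0,t_0]$, and the $L^1$ convergence $\omega(t)^n\to\omega_0^n$ as $t\to0^+$.

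\textbf{The cohomology class.} $\omega(t)$ lies in $[\omega_0]-tc_1(X)$, not in $[\theta]$. We handle this by fixing a smooth closed form $\chi\in c_1(X)$ and setting $\theta_t=\theta-t\chi$. For $t\le t_0$, $\theta_t$ is uniformly equivalent to $\theta$. Since Corollary \ref{cor:main2} and the underlying Theorem \ref{thm:main1} depend only on the background data through uniform constants, they apply uniformly along the family $\theta_t$. Equivalently, we may reparametrize by a smooth family of reference metrics; one has to check that the proof of Theorem \ref{thm:main1} is stable under $C^\infty$-small perturbations of $\theta$. An alternative is to compare $\omega(t)$ with the metric $\omega_0-t\chi$ (more precisely, its solution in the shifted class) and use the triangle inequality. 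I would first try the family version of the corollary.

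\textbf{The density bound.} Write the flow as the parabolic Monge--Amp\`ere equation
$$\partial_t\varphi=\log\frac{(\theta_t+\ddbar\varphi)^n}{\theta^n},\qquad \varphi(0)=\varphi_0,$$
so that $\omega(t)^n=e^{\dot\varphi}\theta^n$. The density bound then amounts to an upper bound on $\dot\varphi$ in $L^{p'}$. The key point is that the initial Ricci current is bounded below: $\ric(\omega_0)\ge-\Lambda\omega_0$, and $\omega_0\ge c\theta$. I would use the maximum principle on a quantity like $t\dot\varphi-\varphi-nt$, or on $\dot\varphi+A\varphi$, which is standard in \cite{ST3}. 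The hope is to show that $\log(\omega(t)^n/\theta^n)\le -f_0+C$ up to errors vanishing as $t\to0$, where $f_0=-\log(\omega_0^n/\theta^n)\in\PSH(X,\lambda\theta)$. Concretely, $u=\dot\varphi+f_0-\lambda'\varphi-Ct$ should satisfy $(\partial_t-\Delta_t)u\le0$ in the barrier sense, using $\ddbar f_0\ge-\lambda\theta$ and $\tr_{\omega(t)}\theta$ terms absorbed by $\lambda'\varphi$. Running this through the smooth approximations $f_j\searrow f_0$ from the regularization before Theorem \ref{thm:main2} (flowing from the smooth $\omega_j$ and using the stability of the weak flow from \cite{ST3,DNL}) would give $e^{\dot\varphi}\le Ce^{-f_0}$. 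Hence $\|\omega(t)^n/\theta^n\|_{L^p}\le C K$ uniformly.

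\textbf{$L^1$ convergence.} Given the uniform $L^p$ bound, the densities are uniformly integrable. Moreover, $\varphi(t)\to\varphi_0$ uniformly, so by Bedford--Taylor continuity $\omega(t)^n\to\omega_0^n$ weakly. The pointwise one-sided bound $e^{\dot\varphi}\le Ce^{-f_0}$ alone does not upgrade weak to $L^1$ convergence. I would combine it with lower semicontinuity of entropy, or prove that the upper bound has the form $e^{\dot\varphi}\le(1+o(1))e^{-f_0}$ as $t\to0$. Together with equal total masses (after the $O(t)$ correction) and weak convergence, this forces $\int|e^{\dot\varphi}-e^{-f_0}|\theta^n\to0$ by the elementary argument $\int|a-b|=2\int(a-b)^+ +\int(b-a)$.

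I expect the main obstacle to be this sharp $(1+o(1))$ upper bound near $t=0$, i.e.\ the parabolic propagation of the Ricci lower bound with constants tending to $1$. The uniformity of Corollary \ref{cor:main2} in the varying class $[\theta_t]$ is a secondary issue. Once both are in place, let $\epsilon\to0$ to obtain the $\limsup\le0$.
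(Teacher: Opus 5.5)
Your outline has the same architecture as the paper's proof: a uniform $L^p$ bound, total-variation convergence via a sharp one-sided density bound and $\int|h-1|=2\int (h-1)_{+}+\int(1-h)$, a fix for the moving class, then Corollary \ref{cor:main2}. There is a genuine gap, however: the step you flag as the main obstacle is where the proof actually lives, and you give no mechanism for it.

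Your barrier $\dot\varphi+f_0-\lambda'\varphi-Ct$ does give $\omega(t)^n\le C\omega_0^n$. But the constant is of order $e^{\lambda'\operatorname{osc}_X\varphi}$ and does not tend to $1$, because the $\lambda'\varphi$ term pays for the uncontrolled $\tr_{\omega(t)}\theta$ with the oscillation of $\varphi$. Lower semicontinuity of entropy is the wrong inequality for upgrading weak to $L^1$ convergence.

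The missing ingredient is a uniform short-time parabolic Schwarz estimate. Run smooth flows $\omega_j(t)$ from the regularizations $\omega_{0,j}$ of Theorem \ref{thm:main2}. These satisfy $\omega_{0,j}\ge c_0\theta$, $|\varphi_j|\le C$, and $\dot\varphi_j\ge -C$ (maximum principle for $\dot\varphi_j+A\varphi_j$). The maximum principle for $\log\tr_{\omega_j(t)}\theta-B\varphi_j$ then gives $\tr_{\omega_j(t)}\theta\le C$ on $X\times[0,\tau]$.

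Consequently $F_j=\log(\omega_j(t)^n/\omega_{0,j}^n)$ satisfies $(\partial_t-\Delta_{\omega_j})F_j=-\tr_{\omega_j}\ric(\omega_{0,j})\le C_0\tr_{\omega_j}\theta\le C$ with $F_j(0)=0$. So $F_j\le Ct$, and in the limit $\omega(t)^n\le e^{Ct}\omega_0^n$. This gives the $L^p$ bound and the $(1+o(1))$ constant at once. No propagation of the Ricci lower bound along the flow is needed; it enters only through the fixed form $\ric(\omega_{0,j})$.

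The same lower bound $\omega(t)\ge C^{-1}\theta$ settles the class issue, where your preferred option does not work as stated. Corollary \ref{cor:main2} rests on $\int_X(\tr_\omega\omega'-n)\,\omega^n=0$, i.e.\ on $[\omega]=[\omega']$. But $\omega_0\in[\theta]$ and $\omega(t)\in[\theta]-tc_1(X)$ never share a class, so uniformity of constants along $\theta_t$ does not help. An $O(t)$-perturbed version would require redoing the deviation lemma. Your alternative would require constructing and controlling a canonical distance for a new singular current in each shifted class.

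The paper instead moves the smooth metric back: $\widehat\omega(t)=\omega(t)+t\ric(\theta)=\theta+\ddbar\varphi(t)\in[\theta]$. The bound $\omega(t)\ge C^{-1}\theta$ gives $(1-Ct)\omega(t)\le\widehat\omega(t)\le(1+Ct)\omega(t)$. Hence $\widehat\omega(t)$ inherits the $L^p$ bound and the $L^1$ convergence to $\omega_0^n$, and Corollary \ref{cor:main2} applies verbatim in $[\theta]$. Finally, $d_{\omega(t)}\le(1-Ct)^{-1/2}d_{\widehat\omega(t)}$ together with a uniform diameter bound costs only $O(t)$.
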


By Theorem \ref{thm:main2}, the initial current $\omega_0$ induces a
compact RCD metric structure on $X$.  Thus Theorem \ref{thm:main3}
is a distance non-inflation statement for the K\"ahler-Ricci
smoothing of this rough metric.

The definition of $\overline{\cK_\theta(p,K;\lambda)}$ can be extended to any normal  K\"ahler space with klt singularities with slight modifications. Let $X$ be an $n$-dimensional compact normal  K\"ahler space with klt singulariteis equipped with a smooth  K\"ahler metric $\theta$. One has to replace the reference volume measure $\theta^n$ by an adaptive volume measure $\Omega$ because of the severe degeneration of $\theta^n$ near the singularities of $X$.

\begin{conjecture} Let $X$ be a compact normal  K\"ahler variety with klt singularities. Suppose $\omega_0\in \overline{\cK_\theta(p, K;\lambda)}$ induces an RCD structure on $(X, \omega_0)$. Then the unique solution  $(X, \omega(t))$ induces a metric measure space homeomorphic to $X$ for each $t\in (0, T)$ with  
\begin{equation}\lim_{t\rightarrow 0^+}\dgh((X, \omega(t)), (X, \omega_0)) =0. 
\end{equation}

\end{conjecture}

We expect the conjecture to hold complex dimension 3 with canonical singularities. We further conjecture that if, in addition, the Ricci current of $\omega_0$ is bounded above, then the scalar curvature of $\omega(t)$ is uniformly bounded on $X\times [0, T-\epsilon]$ for any $\epsilon\in (0, T)$.

\bigskip
\noindent\textbf{Acknowledgments.} Corollary~\ref{cor:main1} was established as the main theorem by the authors in an earlier version of this paper. In a subsequent attempt to remove the Ricci curvature assumption and generalize it to Theorem~\ref{thm:main1}, ChatGPT pointed out an error in the argument and suggested inequality~\eqref{pwdevi} in Lemma \ref{ailemma}, which is one of the key ingredients for extending Corollary \ref{cor:main1} to Theorem~\ref{thm:main1}.


\section{Geometric estimates for complex Monge-Amp\`ere equations}

We review the basic estimates for complex Monge-Amp\`ere equations
used below. Throughout this section, $(X,\theta)$ is a compact
$n$-dimensional K\"ahler manifold. For simplicity, we assume 
$$[\theta]^n = \int_X \theta^n=1$$
throughout the rest of the paper. The set $\cK_\theta(p,K)$ consists of
the smooth K\"ahler metrics $\omega\in[\theta]$ satisfying the
$L^p$ bound \eqref{lp}, for some $p>1$.

\begin{theorem}   Suppose $\omega=\theta+\ddbar \varphi \in \cK_\theta(p, K)$ for $\varphi\in \PSH(X,\theta)$ with $\sup_X \varphi=0$. Then there exists $\alpha=\alpha(n, p)>0$ and $C=C(n,p,K,\alpha)>0$ such that
\begin{equation}\label{ahold}
    \|\varphi\|_{C^{\alpha}(X)} \leq C. 
\end{equation}
    
\end{theorem}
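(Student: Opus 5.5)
This is Kołodziej's Hölder continuity theorem for solutions with $L^p$ density. The plan is to combine three ingredients: the $L^\infty$ estimate, Kołodziej's stability estimate, and a regularization of $\varphi$ that costs only a controlled amount of mass.

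\emph{Step 1 (uniform bound and stability).} By Kołodziej's $L^\infty$ estimate, $\|\varphi\|_{L^\infty(X)}\le C_0(n,p,K)$. We also use the quantitative form of the stability theorem quoted in the introduction. If $\psi\in\PSH(X,\theta)\cap L^\infty$ and $(\theta+\ddbar\varphi)^n=e^{-f}\theta^n$ with $\|e^{-f}\|_{L^p}\le K$, then
\[
\sup_X(\psi-\varphi)\le C\,\|(\psi-\varphi)_+\|_{L^1(\theta^n)}^{\gamma}
\]
for some $\gamma=\gamma(n,p)>0$ and $C=C(n,p,K)$. This follows from the capacity estimate
\[
\int_E e^{-f}\theta^n\le C\,\mathrm{Cap}_\theta(E)^{1+\beta},
\]
which comes from Hölder's inequality together with the exponential decay of volume in terms of capacity, combined with the comparison principle.

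\emph{Step 2 (regularization).} Following Demailly's regularization as in Kołodziej and Dinew--Zhang, set
\[
\rho_\delta\varphi(x)=\frac{1}{\delta^{2n}}\int_{T_xX}\varphi(\exp_x\zeta)\,\chi\bigl(|\zeta|^2/\delta^2\bigr)\,dV(\zeta).
\]
Its Kiselman--Legendre transform $\Phi_\delta$ satisfies $\theta+\ddbar\Phi_\delta\ge -C\delta\,\theta$ for some small $\delta$-dependent error, so $\psi_\delta=(1-C\delta)\Phi_\delta$ (suitably normalized) is $\theta$-psh. Applying Step 1 to $\psi_\delta$ gives
\[
\sup_X(\psi_\delta-\varphi)\le C\|\psi_\delta-\varphi\|_{L^1}^{\gamma}+C\delta.
\]

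\emph{Step 3 (the $L^1$ bound).} We need $\|\rho_\delta\varphi-\varphi\|_{L^1}\le C\delta^{\kappa}$. A Jensen/Green's formula argument gives
\[
\int_X(\rho_\delta\varphi-\varphi)\,\theta^n\le C\delta^2\int_X\Delta_\theta\varphi\,\theta^n+C\delta^2\le C\delta^2,
\]
because $\theta+\ddbar\varphi\ge0$ and its total mass is cohomological. Then we must pass from the bound $\sup_X(\Phi_\delta-\varphi)\le C\delta^{\kappa'}$ back to an oscillation of $\rho_\delta\varphi$, and from there to Hölder continuity. Pulling the bound back from $\Phi_\delta$ to $\rho_\delta\varphi$ uses the monotonicity and convexity properties of the Kiselman transform in $\log\delta$, at the cost of reducing the exponent. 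This yields
\[
\sup_{B_\delta(x)}\varphi-\varphi(x)\le C\delta^{\alpha}
\]
uniformly in $x$, which is Hölder continuity with $\alpha=\alpha(n,p)$.

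\textbf{Main obstacle.} I expect the hardest part to be Step 2, specifically controlling the loss of positivity in the Kiselman--Legendre transform on a general compact Kähler manifold uniformly in $\delta$. Here I would follow Demailly's estimate directly: the Hessian of $\rho_\delta\varphi$ is bounded below by $-A\,\theta$ up to terms involving $\partial_\delta\rho_\delta\varphi$, and the Legendre transform in $\log\delta$ absorbs those terms. The remaining steps are then bookkeeping of exponents.
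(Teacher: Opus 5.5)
The paper gives no proof of this statement. It is recalled in Section 2 as a known estimate of Ko\l odziej (the dependence $\alpha=\alpha(n,p)$ is that of the later Demailly--Dinew--Guedj--Hiep--Ko\l odziej--Zeriahi paper). Your outline follows that later strategy: stability, Demailly's regularization $\rho_t\varphi$, and the Kiselman--Legendre transform. However, as you have set it up, the decisive step does not close, and it is not bookkeeping.

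Write $B$ for the constant making $t\mapsto\rho_t\varphi+Bt^2$ increasing (and convex in $\log t$), and set
\[
\Phi_{c,\delta}(x)=\inf_{0<t\le\delta}\Bigl(\rho_t\varphi(x)+Bt^2+B\delta^2-c\log\frac{t}{\delta}\Bigr).
\]
Demailly's estimate is $\theta+\ddbar\Phi_{c,\delta}\ge-(Ac+2B\delta^2)\theta$, where the level $c$ is a free parameter. Your loss $C\delta$ amounts to $c\sim\delta$.

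Since $\rho_t\varphi+Bt^2\ge\varphi$, a minimizer $t_x$ satisfies $c\log(\delta/t_x)\le\eta:=\sup_X(\Phi_{c,\delta}-\varphi)$. So the pull-back only gives $\rho_t\varphi(x)-\varphi(x)\le\eta$ for $t\le\delta e^{-\eta/c}$. Stability yields at best $\eta\sim\delta^{2\gamma}$ with $2\gamma<2/(nq+1)<1$. Hence for $c\sim\delta$ the usable scale is $\delta e^{-C\delta^{2\gamma-1}}$, which gives only a logarithmic modulus of continuity. No trade-off of exponents repairs this.

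What you need is $\eta\lesssim c$ together with a positivity loss $Ac\lesssim\eta$:
\begin{enumerate}
\item Take $c=\delta^{2\gamma}$.
\item Keep the positivity loss out of the $L^1$ term. Normalize $\varphi\ge0$, so that $\Phi_{c,\delta}\ge\varphi\ge0$.
\item Then $\psi=(1-\epsilon)\Phi_{c,\delta}$, with $\epsilon=Ac+2B\delta^2$, is $\theta$-psh and satisfies $\psi-\varphi\le\rho_\delta\varphi-\varphi+2B\delta^2$. The $L^1$ norm of the right side is $O(\delta^2)$, independently of $c$.
\item Stability gives $\sup_X(\psi-\varphi)\le C\delta^{2\gamma}$. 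Hence $\eta\le C\delta^{2\gamma}+C\epsilon\le C\delta^{2\gamma}$.
\item So $t_x\ge e^{-C}\delta$ and $\rho_{e^{-C}\delta}\varphi-\varphi\le C\delta^{2\gamma}$.
\item The standard comparison of $\rho_\delta\varphi-\varphi$ with $\sup_{B_\delta}\varphi-\varphi$ then gives $\alpha=2\gamma$.
\end{enumerate}
If the term $\epsilon\,\Phi_{c,\delta}$ is allowed into the $L^1$ norm, stability only gives $\eta\sim c^\gamma\gg c$, and the argument fails for every choice of $c$.

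By contrast, the step you flag as the main obstacle, Demailly's Hessian bound for $\Phi_{c,\delta}$, is a citable lemma.

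A minor point in Step 3: $\int_X\Delta_\theta\varphi\,\theta^n=0$. The $O(\delta^2)$ bound comes from $\Delta_\theta\varphi+n\ge0$ having total mass $n$. You also need the one-sided bound $\rho_\delta\varphi-\varphi\ge-B\delta^2$ to upgrade the integral bound to an $L^1$ bound.
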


A result of \cite[Theorem~4.1 and Corollary~4.2]{Li} connects the
H\"older continuity of $\varphi$ to that of the distance function.
More precisely, with $\beta=\alpha/2$, there is a constant
$C=C(X,\theta,n,p,K)>0$ such that, uniformly for
$\omega\in\cK_\theta(p,K)$,
\begin{equation}\label{lhold}
d_\omega(x,y) \leq C d_\theta(x,y)^\beta
\qquad (x,y\in X).
\end{equation}
This distance Holder estimate immediately gives a uniform diameter bound for $\omega\in \cK_\theta(p,K)$.
It has been an open problem whether estimates (\ref{ahold}) and (\ref{lhold}) hold if the class $[\theta]$ is big and semi-positive instead of  K\"ahler, which inevitably appears as one studies geometric and analytic structures of singularities.  The loss of positivity poses essential obstacles as the approaches in the  K\"ahler generally fail. Recent progress has been made in \cite{GPSS1, GPSS2, GPSS3, GKSS, GSS} to build a geometric theory of complex Monge-Amp\`ere equations with and without curvature. For example, the uniform diameter is established \cite{GPSS1} without any curvature assumptions, while a Holder estimate is proved \cite{GKSS} with curvature assumptions.

We now return to the classical stability theorem for complex Monge-Amp\`ere equation established in \cite{Kol2}. Our goal is to turn the $L^\infty$-estimate for potentials to the $L^\infty$-estimate for distance functions.
\begin{theorem} For any $\epsilon>0$, there exists $\delta=\delta(n, p, K, \epsilon)>0$ such that for any $\omega=\theta+\ddbar\varphi$ and  $\omega' =\theta+\ddbar \varphi'\in \cK_\theta(p,K)$ satisfying
$$\|(\omega')^n -\omega^n \|_{L^1(X)}\leq \delta,$$
and normalize $\sup_X\varphi=\sup_X\varphi'=0$, then
$$\|\varphi'-\varphi\|_{L^\infty(X)}< \epsilon.$$

\end{theorem}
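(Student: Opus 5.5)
We will prove this by a capacity comparison argument in the style of Ko\l odziej, organized as follows.

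\emph{Step 1: a priori bounds.} By Ko\l odziej's $L^\infty$-estimate we have $\|\varphi\|_{L^\infty},\|\varphi'\|_{L^\infty}\le C_0(n,p,K)$. Write $f=\omega^n/\theta^n$ and $g=(\omega')^n/\theta^n$. Both densities satisfy $\|f\|_{L^p},\|g\|_{L^p}\le K$ and $\|f-g\|_{L^1}\le\delta$.

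\emph{Step 2: interpolation to a small $L^q$ norm.} Fix $q=(1+p)/2>1$. H\"older interpolation between $L^1$ and $L^p$ gives
\[
\|f-g\|_{L^q}\le \|f-g\|_{L^1}^{\,s}\,\|f-g\|_{L^p}^{1-s}\le (2K)^{1-s}\delta^{s}
\]
for some $s=s(p)\in(0,1)$. So the right-hand sides are close in $L^q$, not only in $L^1$.

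\emph{Step 3: one-sided estimate via the comparison principle.} By symmetry it suffices to bound $\sup_X(\varphi-\varphi')$ from above. Put $t_0=\sup_X(\varphi-\varphi')$, which is nonnegative by the normalizations, and consider the sublevel sets
\[
U(s)=\{\varphi'<\varphi-t_0+s\}, \qquad 0<s<1.
\]
Each $U(s)$ is nonempty. The plan is to run Ko\l odziej's capacity iteration, whose key inequality is
\[
s^n\,\mathrm{Cap}_\theta(U(s))\le C\int_{U(2s)}(\omega')^n .
\]
It is obtained from the comparison principle applied to $\varphi'$ and $(1-\tau)\varphi+\tau u-t_0+s$, where $u$ is a competitor in the capacity with $0\le u\le1$ and $\tau\sim s$. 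The right-hand side is then split as
\[
\int_{U(2s)} g\,\theta^n\le\int_{U(2s)} f\,\theta^n+\int_X|f-g|\,\theta^n .
\]
The first term is controlled by $C\,\mathrm{Cap}(U(2s))^{1+a}$, using $f\in L^p$ together with the volume–capacity inequality. The second term is at most $\delta$.

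The iteration yields the following dichotomy: if $t_0>\epsilon$, then the capacities of the sets $U(s)$ for $s\le\epsilon/2$ must stay bounded below by some $c(\epsilon)>0$. This uses that the decay function in Ko\l odziej's De Giorgi-type lemma forces either the capacity to vanish before level $\epsilon$ or an error term of size $\gtrsim$ some power of $\epsilon$ to appear. The lower bound has to be derived from the normalizations: $\sup\varphi=\sup\varphi'=0$ together with the uniform H\"older bound \eqref{ahold}. These give that $\varphi-\varphi'$ is within a small amount of $t_0$ on a ball of definite radius $r(\epsilon)$, hence $U(s)$ contains a ball of definite capacity. Feeding this back into the inequality gives $c(\epsilon)^{1+a}\lesssim\epsilon^{-n}\delta$, which is a contradiction once $\delta$ is small. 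A cleaner alternative route is to use the $L^q$ smallness from Step 2 and Ko\l odziej's stability estimate $\|\varphi-\varphi'\|_\infty\le C\|f-g\|_{L^1}^{\gamma}$ directly; this is the form I would ultimately cite if available.

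\emph{Main obstacle.} The delicate step is the capacity iteration with the error term $\delta$. One must check that the additive error does not destroy the decay lemma: the iteration tolerates an additive term only down to capacity level $\sim\delta^{1/(1+a)}$. One must also obtain the lower bound on the capacity of the first sublevel sets from the normalization. For the latter I would first try the H\"older continuity \eqref{ahold} as described, and fall back on compactness of normalized $\theta$-psh functions in $L^1$ (Hartogs lemma) if needed.
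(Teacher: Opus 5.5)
The paper does not prove this statement; it quotes Ko\l odziej's stability theorem \cite{Kol2}. Your fallback of citing Ko\l odziej's estimate $\|\varphi-\varphi'\|_{L^\infty}\le C\|f-g\|_{L^1}^{\gamma}$ is therefore what the paper does, but as a proof it is circular. Everything thus rests on Step 3, and Step 3 has a genuine gap: the closeness of $f$ and $g$ never enters in a way that can constrain $t_0$.

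Your key inequality $s^n\mathrm{Cap}(U(s))\le C\int_{U(2s)}(\omega')^n$ involves only $g$. Since $\|g\|_{L^p}\le K$, you already have $\int_{U(2s)}g\,\theta^n\le C\,\mathrm{Cap}(U(2s))^{1+a}$. Splitting $g\le f+|f-g|$ only produces the weaker relation $s^n h(s)\le C h(2s)^{1+a}+C\delta$, where $h(s)=\mathrm{Cap}(U(s))$. So, apart from an additive $C\delta$ appended to an inequality that is already true without it, every input of Step 3 holds for every pair $\omega,\omega'\in\cK_\theta(p,K)$. This includes the recursion, the H\"older bound \eqref{ahold}, the ball of definite radius inside $U(s)$, and the normalizations. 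In particular it holds for pairs whose densities are far apart and for which $t_0$ is not small, so no manipulation of these inputs can yield $t_0\le\epsilon$.

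Concretely, the claimed contradiction $c(\epsilon)^{1+a}\lesssim\epsilon^{-n}\delta$ does not follow. A lower bound $h(s)\ge c$ on the left is matched on the right by $C\,h(2s)^{1+a}\ge C c^{1+a}$. Moreover, the De Giorgi lemma only terminates for families of sets that eventually become empty, such as $\{\varphi-\varphi'>s\}$ as $s\uparrow t_0$. Your sets $U(s)$ are nonempty for every $s>0$. Even the sharper comparison using $(\theta+\ddbar v)^n\ge(1-\tau)^n\omega^n+\tau^n(\theta+\ddbar u)^n$, which does bring $f$ in, gives only $\tau^n\mathrm{Cap}(U(s))\lesssim\delta+n\tau\int_{U(2s)}f\,\theta^n$, and the last term is not small. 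Step 2 is never actually used.

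What is missing is a global mechanism that uses both equations at once. One route is the energy identity
\[
\int_X(\varphi-\varphi')\bigl((\omega')^n-\omega^n\bigr)=\sum_{k=0}^{n-1}\int_X\sqrt{-1}\partial(\varphi-\varphi')\wedge\dbar(\varphi-\varphi')\wedge\omega^k\wedge(\omega')^{n-1-k},
\]
whose left side is at most $2C_0\delta$. From there:
\begin{enumerate}
\item B\l ocki's inductive Cauchy--Schwarz argument replaces $\omega^k\wedge(\omega')^{n-1-k}$ by $\theta^{n-1}$, at the cost of a power of $\delta$.
\item The Poincar\'e inequality for $\theta$ then makes $\varphi-\varphi'$ close in $L^1$ to a constant $c$.
\item Only now does the capacity/De Giorgi iteration enter, on the superlevel sets $\{\varphi-\varphi'>c+s\}$, with Markov's inequality and $g\in L^p$ supplying the small starting capacity.
\item The normalizations $\sup_X\varphi=\sup_X\varphi'=0$ finally force $|c|$ to be small.
\end{enumerate}

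Alternatively, since only some $\delta(\epsilon)$ is required, argue by contradiction. The bound \eqref{ahold} and Arzel\`a--Ascoli give uniform limits $\varphi_\infty,\varphi'_\infty$ of a bad sequence. Bedford--Taylor continuity together with $\|\omega_j^n-(\omega_j')^n\|_{L^1}\to0$ shows that these limits have the same Monge--Amp\`ere measure. Uniqueness of bounded solutions (Ko\l odziej, B\l ocki) makes $\varphi'_\infty-\varphi_\infty$ constant, and the normalizations make that constant zero, contradicting $\|\varphi_\infty-\varphi'_\infty\|_{L^\infty}\ge\epsilon$.

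Your Hartogs-compactness fallback is thus the right tool, but it has to be paired with uniqueness rather than used to bound capacities from below.
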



\section{An elementary inequality}

In this section, we formulate an elementary inequality that is related to absolute mean deviation. Suppose 
$$\lambda_1,\dots,\lambda_n>0.$$ 
We define 
$$S=\sum_{j=1}^n\lambda_j,\qquad
P=\prod_{j=1}^n\lambda_j$$
and we let 
\[
(S-n)_+=\max\{S-n,0\},\qquad
(S-n)_-=\max\{n-S,0\}=(n-S)_+.
\]

\begin{lemma}\label{ailemma} There is a constant $C_n$, depending only on $n$, such that
\begin{equation}\label{pwdevi}
\sum_{j=1}^n |\lambda_j-1|
\le
C_n\left(
(S-n)_+
+
\sqrt{(S-n)_+ + |P-1|}
\right).
\end{equation}

\end{lemma}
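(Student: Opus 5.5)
Write $\lambda_j = 1+x_j$ with $x_j>-1$. Set $s=(S-n)_+$ and $q=|P-1|$. The plan is to compare $\log P$ with $S-n$ via the concavity of $\log$.

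For each $j$ we have $\log\lambda_j = x_j - g(x_j)$, where $g(x)=x-\log(1+x)\ge 0$. Moreover there is an absolute $c>0$ with
\[
g(x)\ge c\min\{x^2,|x|\}\quad\text{for }x>-1 .
\]
Indeed, $g(x)\sim x^2/2$ near $0$, $g(x)\ge c|x|$ for $x\ge 1$, and $g\to\infty$ as $x\to -1$. Summing over $j$ gives
\[
\sum_j g(x_j) = (S-n) - \log P \le s - \log P .
\]

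\textbf{Controlling $-\log P$.} If $q\le 1/2$, then $|\log P|\le 2q$, so $\sum_j g(x_j)\le s+2q$. If $q>1/2$, the right-hand side of \eqref{pwdevi} is at least $C_n\sqrt{1/2}$. We then split further. If $P\ge 1/2$, then $-\log P\le \log 2\le 2q$ and the same bound holds. If $P<1/2$, some $\lambda_j<2^{-1/n}$, but that alone does not force $\sum_j|\lambda_j-1|$ to be bounded. So we argue directly: $\sum_j|x_j| \le \sum_j x_j + 2\sum_j (x_j)_- \le s + 2n$, since each $(x_j)_-<1$. Hence in the case $q>1/2$ we get
\[
\sum_j|\lambda_j-1| \le s+2n \le s + 2n\sqrt{2}\,\sqrt{q},
\]
which is acceptable. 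The identity $\sum_j|x_j| = \sum_j x_j + 2\sum_j(x_j)_-$ is the key cheap bound. It actually handles every case in which $q$ is bounded below, so we may assume $q\le 1/2$ from now on.

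\textbf{The main case $q\le 1/2$.} Here $\sum_j \min\{x_j^2,|x_j|\}\le c^{-1}(s+2q)=:A$. Split the indices into $J_1=\{j:|x_j|\le 1\}$ and $J_2=\{j:|x_j|>1\}$. For $j\in J_2$ we have $|x_j|\le A$. For $j\in J_1$, Cauchy–Schwarz gives
\[
\sum_{j\in J_1}|x_j| \le \sqrt{n}\Big(\sum_{j\in J_1} x_j^2\Big)^{1/2}\le \sqrt{nA}.
\]
Therefore
\[
\sum_j|\lambda_j-1| \le A+\sqrt{nA}\le C_n\bigl(s+q+\sqrt{s+q}\bigr).
\]

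\textbf{Absorbing the linear $q$ term.} The term $q$ is not present linearly on the right-hand side of \eqref{pwdevi}. Since $q\le 1/2$, we have $q\le \sqrt{q}/\sqrt2\le\sqrt{s+q}$, so it is absorbed into the square-root term. This yields \eqref{pwdevi}.

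The only delicate step is the lower bound $g(x)\ge c\min\{x^2,|x|\}$ on all of $(-1,\infty)$, which is a one-variable calculus check. On $[-1/2,1]$ one uses $g''(x)=(1+x)^{-2}\ge 1/4$ to get $g\ge x^2/8$. For $x\ge1$, $g(x)/x$ is increasing and equals $1-\log2$ at $x=1$. On $(-1,-1/2)$, $g$ is decreasing in $x$, so $g(x)\ge g(-1/2)=\log2-1/2>0$, which dominates $c|x|$ since $|x|<1$.
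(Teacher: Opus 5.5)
Your proof is correct, and it takes a different route from the paper's.

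\textbf{The paper's route.} The paper splits on $S\ge n+1$ versus $S<n+1$. In the first regime it uses the same cheap bound $E\le (S-n)+2n$ that you use. In the second it argues softly. By AM-GM, $\delta=(S-n)_++|P-1|$ vanishes only at $(1,\dots,1)$, so compactness of $\{\lambda_j\ge 0,\ S\le n+1\}\setminus U$ gives $E\le C\sqrt{\delta}$ away from a small neighborhood $U$. Inside $U$, a second-order Taylor expansion $\log P\le T-\tfrac14 Q$, together with a sub-case on whether $T_+\ge Q/8$, yields $\delta\ge cQ$.

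\textbf{Your route.} You split instead on $|P-1|>\tfrac12$ versus $|P-1|\le\tfrac12$. You replace both the local expansion and the compactness step by one global inequality, $x-\log(1+x)\ge \tfrac18\min\{x^2,|x|\}$ on $(-1,\infty)$. Summed over $j$, it gives directly
\[
\sum_j \min\{x_j^2,|x_j|\}\le 8\bigl((S-n)-\log P\bigr).
\]
This makes the paper's dichotomy $T_+\ge Q/8$ versus $T_+<Q/8$ unnecessary, and it identifies $(S-n)-\log P=\sum_j\bigl(x_j-\log(1+x_j)\bigr)$ as the natural controlling quantity.

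\textbf{Comparison.} Your route gives a fully explicit constant $C_n$ (of order $n$) and uses no compactness. The paper's argument is shorter to state but leaves the constant in the region $S<n+1$ non-effective. Your three-interval calculus check, the bound $-\log P\le 2|P-1|$ for $|P-1|\le\tfrac12$, and the absorption $|P-1|\le\sqrt{(S-n)_++|P-1|}$ when $|P-1|\le \tfrac12$ all check out.
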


\begin{proof}
    Let
\[
\delta=(S-n)_+ + |P-1|,
\]
and we will prove
\[
E\le C_n\left((S-n)_+ + \sqrt{\delta}\right).
\]
We break the proof into two cases. 
\begin{enumerate}

\item We consider the case of large trace when 
$$S \geq n+1. $$
We immediately have 
\[
E=\sum_{\lambda_j\ge1}(\lambda_j-1)+\sum_{\lambda_j<1}(1-\lambda_j)
\le (S-n)+2n
\]
from the assumptions. 
Hence
\[
E\le (2n+1)(S-n),
\]
 the desired estimate follows with $C_n = 2n+1$.

\medskip

\item We now consider the case when   $$S<n+1.$$
This immediately implies $\lambda_j<n+1$ every where on $X$ for each $j=1, ..., n$. Hence 
$E\leq n^2$ is uniformly bounded. 

\begin{claim} $\delta=0$ if and only $$\lambda_1=...=\lambda_n =1. $$
    
\end{claim}

\begin{proof} if $\delta=0$, then
$S\le n$ and $P=1$. By the AM-GM inequality, we have
\[
P^{1/n}\le \frac Sn\le1,
\]
and equality implies $\lambda_1=\cdots=\lambda_n=1$.

\end{proof}

Fix a small neighborhood $U$ of $(1,\ldots,1)$. On the compact set
\[
 \{\lambda_j\geq0,\ S\leq n+1\}\setminus U,
\]
the continuous function $\delta$ has no zero and hence has a positive
minimum, while $E$ is bounded. Therefore there exists $C=C(n,U)>0$
such that
\[
E\le C_n\sqrt{\delta}. 
\]
Therefore it remains to prove the desired inequality in a fixed small neighborhood $U$ of $(1,\ldots,1)$.

We let 
$$\lambda_j=1+x_j$$  and define
\[
T=\sum_{j=1}^n x_j=S-n,\qquad
Q=\sum_{j=1}^n x_j^2.
\]

\begin{claim} For a fixed sufficiently small neighborhood $U$ of $(1,..., 1)$, there exists $c=c(n, U)>0$ such that 
$$ \delta \geq c Q. $$
    
\end{claim}

\begin{proof} Using the expansion of 
\[
\log(1+x)=x-\frac{x^2}{2}+O(|x|^3),
\]
we obtain
\[
\log P = \sum_{j=1}^n \log (1+x_j)
=
T-\frac12Q
+O\!\left( \sum_{j=1}^n|x_j|^3\right).
\]
After shrinking the neighborhood $U$, we have 
\[
\log P\le T-\frac14Q.
\]

If $T_+\ge \frac{1}{8}Q$, then
\[
\delta\ge T_+\ge \frac18 Q.
\]

Otherwise $T_+<\frac{1}{8}Q$, so
\[
\log P\le -\frac18 Q.
\]
Hence $P<1$. After shrinking $U$ once more, $Q$ is small and
\[
 1-P\geq 1-e^{-Q/8}\geq \frac{1}{16}Q.
\]
Consequently,
\[
 \delta\geq |P-1|\geq \frac{1}{16}Q.
\]
 
Thus in either case
\[
\delta\ge c Q
\]
and we have proved the claim.

\end{proof}
\noindent Finally, we can complete the proof in the second case because
\[
E=\sum_{j=1}^n |x_j|
\le
\sqrt n\,Q^{1/2}
\le
C\sqrt{\delta}
\]

\end{enumerate}

Combining the two cases, we have proved the lemma.

\end{proof}


\section{An integral estimate for the metric deviation}

Let $(Z,d\mu)$ be a probability space and let
$\lambda_1,\ldots,\lambda_n:Z\to\mathbb R_{>0}$ be measurable. Put
$$E(x)=\sum_{j=1}^n|\lambda_j(x)-1|,\qquad
S(x)=\sum_{j=1}^n\lambda_j(x),\qquad
P(x)=\prod_{j=1}^n\lambda_j(x).$$

\begin{lemma} Suppose 
$$\int_Z(S-n)\,d\mu=0,$$
then 
\begin{equation}\label{intdevi}
\int_Z E\,d\mu \leq C_n \left( \int_Z |P-1|\,d\mu
+ \left(\int_Z |P-1|\,d\mu \right)^{1/2} \right).
\end{equation}

\end{lemma}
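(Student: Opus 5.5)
The plan is to integrate the pointwise inequality \eqref{pwdevi} of Lemma \ref{ailemma} over $Z$ and then use the hypothesis $\int_Z (S-n)\,d\mu=0$ to control the positive part of the trace. Write $T=S-n$ and $\eta=\int_Z|P-1|\,d\mu$. Applying \eqref{pwdevi} at each point $x\in Z$ and integrating against the probability measure $\mu$ gives
$$\int_Z E\,d\mu\le C_n\left(\int_Z T_+\,d\mu+\int_Z\sqrt{T_++|P-1|}\,d\mu\right).$$
By Cauchy--Schwarz (or Jensen, since $\mu(Z)=1$), the second integral is at most $\left(\int_Z T_+\,d\mu+\eta\right)^{1/2}$. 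It therefore remains to bound $\int_Z T_+\,d\mu$ by a multiple of $\eta+\eta^{1/2}$.

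Since $\int_Z T\,d\mu=0$, we have $\int_Z T_+\,d\mu=\int_Z T_-\,d\mu$, so it suffices to bound $\int_Z T_-\,d\mu$. On the set where $T<0$ we have $S<n$, so each $\lambda_j<n$ and $T_-\le n$. By AM--GM, $P^{1/n}\le S/n=1-T_-/n$, hence
$$P\le (1-T_-/n)^n\le 1-T_-/n,$$
using $0\le 1-T_-/n\le 1$ and $n\ge 1$. It follows that $T_-\le n(1-P)\le n|P-1|$ pointwise on $\{T<0\}$. Integrating gives $\int_Z T_-\,d\mu\le n\eta$.

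Combining the two steps,
$$\int_Z E\,d\mu\le C_n\left(n\eta+(n\eta+\eta)^{1/2}\right)\le C_n'\left(\eta+\eta^{1/2}\right),$$
which is \eqref{intdevi} after enlarging $C_n$. The key step is the one-sided AM--GM bound converting the negative part of the trace into $|P-1|$. This is exactly what the zero-mean hypothesis needs in order to trade the uncontrolled positive part $T_+$ in \eqref{pwdevi} for a quantity controlled by $\eta$. The remaining steps are routine integration.
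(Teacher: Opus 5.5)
Your proposal is correct and follows the paper's proof essentially step for step: you integrate \eqref{pwdevi}, apply Cauchy--Schwarz on the probability space, and use the zero-mean hypothesis to replace $\int_Z (S-n)_+\,d\mu$ by $\int_Z (S-n)_-\,d\mu$, which you then bound pointwise by $n|P-1|$ via AM--GM. Your chain $P\le(1-T_-/n)^n\le 1-T_-/n$ is only a rephrasing of the paper's $n-S\le n(1-P^{1/n})\le n(1-P)$ on $\{S\le n\}$.
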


\begin{proof} Integrating the pointwise estimate (\ref{pwdevi}), we have  
\[
\int_Z E\,d\mu
\le
C_n\left[
\int_Z (S-n)_+\,d\mu
+
\int_Z \sqrt{(S-n)_+ + |P-1|}\,d\mu
\right].
\]
By Cauchy-Schwarz, we have 
\[
\int_Z E\,d\mu
\le
C_n
\left[
\int_Z (S-n)_+\,d\mu
+
\left(
\int_Z (S-n)_+\,d\mu
+
\int_Z |P-1|\,d\mu
\right)^{1/2}
\right].
\]
Since
\[
\int_Z (S-n)\,d\mu=0, 
\]
we have 
\[
\int_Z (S-n)_+\,d\mu
=
\int_Z (S-n)_-\,d\mu.
\]

On the region $U=\{S\leq n\}\subset Z$, the AM-GM inequality gives
\[
P^{1/n}\le \frac{S}{n} \leq 1,
\]
hence
\[
n-S
\le
n(1-P^{1/n})
\le
n(1-P)
\le
n|P-1|.
\]
This implies 
\[
(S-n)_-\le n|P-1|
\]
on $U$ and so 
\[
\int_Z (S-n)_+\,d\mu
=
\int_Z (S-n)_-\,d\mu = \int_U(S- n)_- \, d\mu
\le
n\int_Z |P-1|\,d\mu.
\]

Substituting into the previous estimate yields
\[
\int_Z E\,d\mu
\le
C_n
\left(
\int_Z |P-1|\,d\mu
+
\left(
\int_Z |P-1|\,d\mu
\right)^{1/2}
\right).
\]
\end{proof}


We will apply the estimate (\ref{intdevi}) to   K\"ahler metrics in the same cohomology class. 
Let $\omega$ and $\omega'$ be two smooth K\"ahler metrics on an $n$-dimensional compact  K\"ahler manifold $X$ satisfying
$$[\omega]=[\omega'].$$
Put $V=\int_X\omega^n=\int_X(\omega')^n$ and let
$\lambda_1,\dots,\lambda_n$ be the eigenvalues of $\omega'$ with respect to $\omega$. We will apply (\ref{intdevi}) with   
\[
S=\operatorname{tr}_{\omega}(\omega')=\sum_{j=1}^n \lambda_j,
\qquad
P=\frac{(\omega')^n}{\omega^n} =\prod_{j=1}^n \lambda_j,
\qquad
d\mu= \omega^n.
\]

The quantity of interest is the following deviation of $\omega'$ from $\omega$ 
\begin{equation} \label{mdevi}
E_{\omega,\omega'} =  
\sum_{j=1}^n |\lambda_j-1|
.
\end{equation}
Since $[\omega]=[\omega']$,
\[
\int_X
(\operatorname{tr}_{\omega}(\omega')-n)\,d\mu
= n\int_X(\omega'-\omega)\wedge\omega^{n-1}=0.
\]
By applying (\ref{intdevi}), we have proved following lemma. 

\begin{lemma}
\begin{equation}\label{lem:mdevi}
\int_X E_{\omega,\omega'}\,d\mu
\le
C_n
\left(
 \left\|(\omega')^n-\omega^n\right\|_{L^1(X)}
+
\left(
 \left\|(\omega')^n-\omega^n\right\|_{L^1(X)}
\right)^{1/2}
\right).
\end{equation}

\end{lemma}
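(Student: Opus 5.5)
This lemma is essentially a direct specialization of the integral estimate (\ref{intdevi}), so the plan is to verify that its hypotheses hold in the present setting and then translate the right-hand side. I take $Z=X$ with the probability measure $d\mu=\omega^n$; this is a probability measure because of the normalization $\int_X\omega^n=[\theta]^n=1$. At each point of $X$, I let $\lambda_1,\dots,\lambda_n$ be the eigenvalues of $\omega'$ with respect to $\omega$. These are positive because $\omega'$ is K\"ahler. Ordering them increasingly makes them continuous, hence measurable. Since $E$, $S$ and $P$ are symmetric functions of the $\lambda_j$, the ordering does not affect them. With these choices, $S=\operatorname{tr}_\omega(\omega')$, $P=(\omega')^n/\omega^n$, and $E=E_{\omega,\omega'}$.

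The only hypothesis of (\ref{intdevi}) is $\int_X(S-n)\,d\mu=0$. Since $\operatorname{tr}_\omega(\omega')\,\omega^n=n\,\omega'\wedge\omega^{n-1}$, we have
$$\int_X(S-n)\,\omega^n=n\int_X(\omega'-\omega)\wedge\omega^{n-1}.$$
This vanishes because $\omega'-\omega=\ddbar(\varphi'-\varphi)$ is exact and $\omega^{n-1}$ is closed, so Stokes' theorem applies on the compact manifold $X$. This is exactly the computation displayed before the statement.

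Next I identify the right-hand side:
$$\int_X|P-1|\,\omega^n=\int_X\left|\frac{(\omega')^n}{\omega^n}-1\right|\omega^n=\|(\omega')^n-\omega^n\|_{L^1(X)},$$
which is the total variation identity recorded in the introduction. Substituting this into (\ref{intdevi}) gives (\ref{lem:mdevi}) with the same constant $C_n$.

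I expect no real obstacle here. The only points needing care are that $\mu$ is a probability measure, which uses the normalization $[\theta]^n=1$, and the measurability of the eigenvalue functions, which is handled by the symmetric-function remark above.
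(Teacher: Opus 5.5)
Your proposal is correct and follows the paper's argument: you specialize \eqref{intdevi} to $Z=X$, $d\mu=\omega^n$, with the eigenvalues of $\omega'$ relative to $\omega$, check $\int_X(S-n)\,d\mu=n\int_X(\omega'-\omega)\wedge\omega^{n-1}=0$, and identify $\int_X|P-1|\,d\mu$ with $\|(\omega')^n-\omega^n\|_{L^1(X)}$. Your explicit appeal to $[\theta]^n=1$, which makes $\mu$ a probability measure, is a useful addition: the Cauchy--Schwarz step in \eqref{intdevi} needs it, whereas the paper only writes $V=\int_X\omega^n$ and leaves $V=1$ implicit.
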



\section{Proof of Theorem \ref{thm:main1}}

Fix $\omega,\omega'\in\cK_\theta(p,K)$ and put
\[
 \int_X\theta^n=1,\qquad
 \mu= \omega^n,\qquad
 q= \|(\omega')^n-\omega^n\|_{L^1(X)}.
\]
Let $E=E_{\omega,\omega'}$ be the deviation defined in
\eqref{mdevi}. For $x,y\in X$, let $\Gamma_\omega(x,y)$ be the
family of unit-speed minimizing $\omega$-geodesics from $x$ to $y$ and
define
\begin{equation}\label{segment-functional}
 {\mathcal F}_E(x,y)
 =\inf_{\gamma\in\Gamma_\omega(x,y)}
   \int_\gamma E\,ds_\omega.
\end{equation}

We use the following standard form of the Cheeger-Colding segment
inequality.

\begin{lemma}\label{lem:seg}
Let $(M^m,g)$ be compact with $\ric(g)\geq-\lambda g$. If
$A,B\subset B_g(a,R)$ and $h\geq0$ is measurable, then
\begin{equation}\label{segment-inequality}
 \int_{A\times B}
 \inf_{\gamma\in\Gamma_g(x,y)}\int_\gamma h\,ds_g\,
 dV_g(x)dV_g(y)
 \leq C\bigl(\Vol_g(A)+\Vol_g(B)\bigr)
 \int_{B_g(a,2R)}h\,dV_g,
\end{equation}
where $C=C(m,\lambda,R)$.
\end{lemma}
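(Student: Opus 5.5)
This is the Cheeger–Colding segment inequality, and the plan is to reproduce its standard proof: compare volumes along geodesics by Bishop–Gromov, then integrate using Fubini. By symmetry of the roles of $x$ and $y$, it suffices to bound the integral over pairs where the point of evaluation lies in the second half of the geodesic. Split
\[
\int_\gamma h\,ds_g=\int_0^{d/2}h(\gamma(s))\,ds+\int_{d/2}^{d}h(\gamma(s))\,ds ,
\]
with $d=d_g(x,y)\le 2R$. Define $\mathcal F_1(x,y)$ and $\mathcal F_2(x,y)$ as the two halves, each computed along a fixed measurable choice of minimizing geodesic; such a choice exists off the cut locus, which has measure zero. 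Then $\mathcal F\le \mathcal F_1+\mathcal F_2$, and $\mathcal F_1(x,y)=\mathcal F_2(y,x)$. So I will show
\[
\int_{A\times B}\mathcal F_2\le C\,\Vol_g(A)\int_{B_g(a,2R)}h ,
\]
and the symmetric bound then gives the term $\Vol_g(B)$.

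Fix $x\in A$ and work in polar coordinates at $x$. Write $dV_g=\mathcal A(t,\xi)\,dt\,d\xi$ on the segment domain, with $y=\exp_x(r\xi)$. Then
\[
\int_B\mathcal F_2(x,y)\,dy\le\int_{S_x}\int_0^{2R}\int_{r/2}^{r}h(\exp_x(s\xi))\,ds\,\mathcal A(r,\xi)\,dr\,d\xi .
\]
The key input is the relative volume comparison along rays under $\ric\ge-\lambda g$: for $r/2\le s\le r$, the ratio $\mathcal A(r,\xi)/\mathcal A(s,\xi)$ is bounded by the corresponding ratio in the model space of curvature $-\lambda/(m-1)$. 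Since $s\ge r/2$ and $r\le 2R$, this ratio is at most a constant $C(m,\lambda,R)$. This is exactly why only the half near $y$ is estimated with $x$ as the pole: the ratio blows up as $s\to0$.

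Substituting $\mathcal A(r,\xi)\le C\mathcal A(s,\xi)$ and swapping the order of integration with Fubini, each $s$ is hit by $r\in[s,2s]$, so the $r$-integral contributes a factor $\le s\le 2R$. This gives
\[
\int_B\mathcal F_2(x,\cdot)\le C\int_{S_x}\int_0^{2R}h(\exp_x(s\xi))\mathcal A(s,\xi)\,ds\,d\xi\le C\int_{B_g(x,2R)}h .
\]
Points $\gamma(s)$ lie within distance $d(x,y)$ of $x$. More usefully, both $x$ and $y$ lie in $B_g(a,R)$, so every point of a minimizing geodesic between them lies within $R+R$ of $a$. Hence the domain of integration can be taken to be $B_g(a,2R)$, with $h$ restricted there. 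Integrating over $x\in A$ yields $C\Vol_g(A)\int_{B_g(a,2R)}h$.

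The main technical points are, first, the measurable selection of geodesics together with the a.e. validity of polar coordinates, which is handled by ignoring the measure-zero cut locus, since taking the infimum over $\Gamma_g(x,y)$ only helps. Second, one must check that the geodesic genuinely stays inside $B_g(a,2R)$. Both are routine. The genuine content is the Bishop–Gromov monotonicity of $\mathcal A(t,\xi)/\mathcal A_{\lambda}(t)$, which I will take as standard.
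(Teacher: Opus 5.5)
Your proposal is correct and is the standard Cheeger--Colding argument: split each segment at its midpoint, put the pole at the endpoint far from the half being evaluated, use the ray-wise Bishop--Gromov bound $\mathcal A(r,\xi)\le C(m,\lambda,R)\,\mathcal A(s,\xi)$ for $r/2\le s\le r\le 2R$, and apply Fubini. Replacing $h$ by $h\mathbf 1_{B_g(a,2R)}$ before dropping the indicator of $B$ is legitimate, since the evaluated half of each segment lies within $d_g(x,y)/2<R$ of $y\in B_g(a,R)$; the paper gives no proof of its own and simply cites this standard form of the segment inequality from \cite{CC1}, so your route is the one it relies on.
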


\begin{lemma}\label{lem:seg1}
If $\ric(\omega)\geq-\lambda\omega$, then
\begin{equation}\label{integrated-segment-deviation}
 \int_{X\times X}{\mathcal F}_E\,d(\mu\otimes\mu)
 \leq C\bigl(q+q^{1/2}\bigr),
\end{equation}
where $C$ depends only on the fixed background data, $p,K$, and
$\lambda$.
\end{lemma}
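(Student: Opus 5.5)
The plan is to apply the segment inequality, Lemma \ref{lem:seg}, on the Riemannian manifold $(X,\omega)$ of real dimension $m=2n$ with $h=E=E_{\omega,\omega'}\geq 0$. We take $A=B=X$ and $a\in X$ arbitrary. For $R$ we need a uniform upper bound $D$ for $\diam_\omega(X)$, and this comes from the uniform Hölder estimate \eqref{lhold}: $d_\omega(x,y)\le C d_\theta(x,y)^\beta$ with $C=C(X,\theta,n,p,K)$, so $\diam_\omega(X)\le C\,\diam_\theta(X)^\beta=:D$. Setting $R=D$ gives $X=B_\omega(a,R)=B_\omega(a,2R)$ (taking $R$ slightly larger if the balls are open). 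The constant from Lemma \ref{lem:seg} is then $C(2n,\lambda,D)$, which depends only on the background data, $p$, $K$ and $\lambda$. One point to check is the normalization of the Ricci bound. $\ric(\omega)\ge -\lambda\omega$ as $(1,1)$-forms is the same as $\ric(g)\ge-\lambda g$ for the associated Riemannian metric, so the lemma applies directly.

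Next we make the measures match. Since $[\omega]=[\theta]$ and $\int_X\theta^n=1$, we have $\Vol_\omega(X)=\int_X\omega^n=1$ up to the usual dimensional constant relating $\omega^n$ and $dV_\omega$ (namely $\omega^n=n!\,dV_\omega$ in the standard convention). This constant can be absorbed into $C$, so $\mu=\omega^n$ and $dV_\omega$ are comparable probability-type measures. Lemma \ref{lem:seg} then gives
\[
\int_{X\times X}{\mathcal F}_E\,d(\mu\otimes\mu)\le C\bigl(\Vol_\omega(X)+\Vol_\omega(X)\bigr)\int_X E\,dV_\omega\le C'\int_X E\,d\mu .
\]
Here ${\mathcal F}_E$ is exactly the integrand in \eqref{segment-inequality}, because $\Gamma_\omega(x,y)$ is the family of unit-speed minimizing geodesics. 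Measurability of ${\mathcal F}_E$ is treated as part of the standard statement of Lemma \ref{lem:seg}; the integral may be read as a lower integral if needed, and $E$ is in fact continuous because $\omega,\omega'$ are smooth.

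Finally, we apply Lemma \eqref{lem:mdevi} to obtain
\[
\int_X E\,d\mu\le C_n\Bigl(\|(\omega')^n-\omega^n\|_{L^1(X)}+\|(\omega')^n-\omega^n\|_{L^1(X)}^{1/2}\Bigr)=C_n(q+q^{1/2}).
\]
Combining this with the previous display gives \eqref{integrated-segment-deviation}. The hypothesis $\omega'\in\cK_\theta(p,K)$ is not needed at this step; only $[\omega']=[\omega]$ is used.

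The main obstacle is to make sure the constant in the segment inequality is uniform, that is, to control $R$. This is settled by the diameter bound coming from \eqref{lhold}. No noncollapsing is needed, since the segment inequality only uses Bishop–Gromov relative volume comparison under $\ric\ge-\lambda$ on balls of radius at most $2D$.
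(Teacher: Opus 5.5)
Your proposal is correct and follows the paper's proof: apply the segment inequality on $(X,\omega)$ with $A=B=X$, $h=E$ and $R$ exceeding the uniform diameter bound from \eqref{lhold}, normalize the total volume, and finish with \eqref{lem:mdevi}. Your remarks on the $n!$ factor relating $\omega^n$ and $dV_\omega$, the Ricci normalization, and the measurability of ${\mathcal F}_E$ only make explicit details the paper leaves implicit.
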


\begin{proof}
The distance H\"older estimate \eqref{lhold} gives a uniform diameter
bound $D$ for the metrics in $\cK_\theta(p,K)$. Apply
Lemma \ref{lem:seg} with $A=B=X$, $R>D$, and $h=E$. After dividing by
$V^2$, the right-hand side is bounded by
$C\int_XE\,d\mu$. Estimate \eqref{lem:mdevi} proves
\eqref{integrated-segment-deviation}.
\end{proof}

For $r>0$, define the one-sided bad set
\begin{equation}\label{bad-minus}
 \cB^-(r)=
 \{(x,y)\in X\times X:
 d_{\omega'}(x,y)>d_\omega(x,y)+r\}.
\end{equation}

\begin{lemma}\label{lem:vollow}
Under the assumptions of Lemma \ref{lem:seg1},
\begin{equation}\label{bad-set-volume}
 (\mu\otimes\mu)(\cB^-(r))
 \leq \frac{C}{r}\bigl(q+q^{1/2}\bigr).
\end{equation}
\end{lemma}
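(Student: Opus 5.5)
The plan is to bound $d_{\omega'}(x,y)-d_\omega(x,y)$ pointwise by the segment functional ${\mathcal F}_E(x,y)$, and then combine Chebyshev's inequality with Lemma \ref{lem:seg1}.

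\emph{Pointwise comparison.} Fix $(x,y)$ and let $\gamma\in\Gamma_\omega(x,y)$ be a unit-speed minimizing $\omega$-geodesic, so its $\omega$-length equals $d_\omega(x,y)$. At each point, diagonalize $\omega'$ with respect to $\omega$, with eigenvalues $\lambda_j$. For a $\omega$-unit vector $v$,
\[
|v|_{\omega'}^2=\sum_j\lambda_j|v_j|^2\le \max_j\lambda_j\le 1+\sum_j|\lambda_j-1| = 1+E .
\]
Hence $|v|_{\omega'}\le \sqrt{1+E}\le 1+E$ (and even $\le 1+E/2$). Integrating along $\gamma$ gives
\[
d_{\omega'}(x,y)\le L_{\omega'}(\gamma)\le \int_\gamma (1+E)\,ds_\omega = d_\omega(x,y)+\int_\gamma E\,ds_\omega .
\]
Taking the infimum over $\gamma\in\Gamma_\omega(x,y)$ yields
\[
d_{\omega'}(x,y)-d_\omega(x,y)\le {\mathcal F}_E(x,y)\qquad\text{for all }x,y\in X.
\]

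\emph{Chebyshev.} By the pointwise comparison, $\cB^-(r)\subset\{(x,y):{\mathcal F}_E(x,y)>r\}$. Therefore
\[
(\mu\otimes\mu)(\cB^-(r))\le \frac1r\int_{X\times X}{\mathcal F}_E\,d(\mu\otimes\mu)\le \frac{C}{r}\bigl(q+q^{1/2}\bigr),
\]
where the last step is Lemma \ref{lem:seg1}. This is \eqref{bad-set-volume}.

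\emph{Measurability.} The only delicate point is that ${\mathcal F}_E$ must be measurable, or at least that the integral can be read as an upper integral, so that Chebyshev applies. This is already implicit in Lemma \ref{lem:seg} as it is stated. If needed, one can instead apply Chebyshev to any measurable majorant produced in the proof of the segment inequality. Since $E$ is continuous and both metrics are smooth, I expect this to be routine.
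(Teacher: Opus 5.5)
Your proposal is correct and follows the paper's proof essentially step for step. The steps match: the pointwise bound $g_{\omega'}(\dot\gamma,\dot\gamma)\le 1+E$ obtained by diagonalizing $\omega'$ against $\omega$, then $\sqrt{1+t}\le 1+t$ to get $d_{\omega'}-d_\omega\le\mathcal F_E$, then the inclusion $\cB^-(r)\subset\{\mathcal F_E>r\}$, and finally Markov's inequality combined with Lemma \ref{lem:seg1}. Your measurability remark addresses a point the paper passes over silently, and it is indeed routine: for continuous $E$ and smooth $\omega$, limits of minimizing geodesics are minimizing, so $\mathcal F_E$ is lower semicontinuous and hence Borel.
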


\begin{proof}
Let $\gamma$ be any unit-speed minimizing $\omega$-geodesic from
$x$ to $y$. Diagonalizing $\omega'$ with respect to $\omega$ gives
\[
 g_{\omega'}(\dot\gamma,\dot\gamma)\leq 1+E(\gamma).
\]
Since $\sqrt{1+t}\leq1+t$ for $t\geq0$,
\[
 d_{\omega'}(x,y)
 \leq L_{\omega'}(\gamma)
 \leq d_\omega(x,y)+\int_\gamma E\,ds_\omega.
\]
This holds for every $\gamma\in\Gamma_\omega(x,y)$, and hence
\[
 d_{\omega'}(x,y)-d_\omega(x,y)\leq{\mathcal F}_E(x,y).
\]
Thus $\cB^-(r)\subset\{{\mathcal F}_E>r\}$, and
\eqref{bad-set-volume} follows from Markov's inequality and
Lemma \ref{lem:seg1}.
\end{proof}

\begin{lemma}\label{lemformain1}
Let $\omega\in\cK_\theta(p,K)$ satisfy
$\ric(\omega)\geq-\lambda\omega$. For every $\eta>0$, there is
$\delta=\delta(X,\theta,n,p,K,\lambda,\eta)>0$ such that
\[
 \|(\omega')^n-\omega^n\|_{L^1(X)}<\delta,\qquad
 \omega'\in\cK_\theta(p,K),
\]
implies
\begin{equation}\label{one-sided-uniform}
 d_{\omega'}(x,y)\leq d_\omega(x,y)+\eta
 \qquad\text{for all }x,y\in X.
\end{equation}
\end{lemma}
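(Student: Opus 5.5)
We upgrade the measure estimate of Lemma \ref{lem:vollow} to a pointwise one, using the uniform H\"older control \eqref{lhold} for $d_{\omega'}$ and a volume non-collapsing bound for $\omega$. Fix $\eta>0$ and let $x,y\in X$ be arbitrary. Choose a radius $\rho>0$, to be fixed in terms of $\eta$. Set
\[
U=B_\omega(x,\rho),\qquad W=B_\omega(y,\rho).
\]

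\textbf{Lower volume bound.} We need $\mu(U)$ and $\mu(W)$ bounded below by some $v(\rho)>0$ depending only on the background data. Since $\omega\geq c\theta$ is not available here, we use the Ricci lower bound instead. Bishop--Gromov relative volume comparison for $\ric(\omega)\geq-\lambda\omega$ and the uniform diameter bound $D$ from \eqref{lhold} give
\[
\mu(B_\omega(x,\rho))\geq \frac{V_{-\lambda}(\rho)}{V_{-\lambda}(D)}\,\mu(X)=c(\rho)>0,
\]
because $\mu(X)=1$. Here $V_{-\lambda}(\cdot)$ denotes the volume of balls in the real $2n$-dimensional model space. Then
\[
(\mu\otimes\mu)(U\times W)\geq c(\rho)^2.
\]
If $\delta$ is chosen so that $\frac{C}{r}(q+q^{1/2})<c(\rho)^2$ with $r=\eta/3$, then Lemma \ref{lem:vollow} shows that $U\times W$ is not contained in $\cB^-(\eta/3)$. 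So there exist $x'\in U$ and $y'\in W$ with
\[
d_{\omega'}(x',y')\leq d_\omega(x',y')+\eta/3\leq d_\omega(x,y)+2\rho+\eta/3.
\]

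\textbf{Transferring back to $x,y$.} By the triangle inequality,
\[
d_{\omega'}(x,y)\leq d_{\omega'}(x,x')+d_{\omega'}(x',y')+d_{\omega'}(y',y).
\]
We need $d_{\omega'}(x,x')$ to be small whenever $d_\omega(x,x')<\rho$. By \eqref{lhold} applied to $\omega'$,
\[
d_{\omega'}(x,x')\leq C\,d_\theta(x,x')^\beta,
\]
so it suffices to bound $d_\theta$ by $d_\omega$. This is the main obstacle: we need a modulus of continuity of $d_\theta$ with respect to $d_\omega$ that holds uniformly over $\omega\in\cK_\theta(p,K)$ with $\ric\geq-\lambda$.

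The first approach is a Schwarz lemma. The Ricci lower bound on $\omega$, together with bounded bisectional curvature of $\theta$ and Chern--Lu, gives
\[
\Delta_\omega \log\tr_\omega\theta\geq -\lambda - C\,\tr_\omega\theta.
\]
Combining this with the bounded potential $\varphi$ from Ko\l odziej's estimate, via the maximum principle applied to $\log\tr_\omega\theta - A\varphi$, yields $\tr_\omega\theta\leq C$. Hence $\omega\geq c\theta$ and $d_\theta\leq C d_\omega$.

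If the Schwarz argument fails to be uniform, the fallback is a compactness/contradiction argument using the Hölder homeomorphism bound. A sequence violating the modulus would produce a Gromov--Hausdorff limit in which two distinct $\theta$-points are identified, contradicting the volume non-collapsing bound above.

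\textbf{Conclusion.} With $\omega\geq c\theta$,
\[
d_{\omega'}(x,x')\leq C(c^{-1}\rho)^\beta,
\]
and the same bound holds for $d_{\omega'}(y',y)$. Choosing $\rho$ so that
\[
2\rho+2C(c^{-1}\rho)^\beta<\tfrac{2\eta}{3},
\]
and then $\delta$ as above, gives $d_{\omega'}(x,y)\leq d_\omega(x,y)+\eta$ for all $x,y\in X$, which is \eqref{one-sided-uniform}.
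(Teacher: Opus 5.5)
Your proposal is correct and follows essentially the paper's argument: Ko\l odziej's bound plus the Schwarz lemma gives $\omega\geq c\theta$, hence $d_{\omega'}\leq A_0\,d_\omega^\beta$ via \eqref{lhold}, and Bishop--Gromov lower bounds on $\mu(B_\omega(\cdot,\rho))$ combined with Lemma \ref{lem:vollow} prevent $B_\omega(x,\rho)\times B_\omega(y,\rho)$ from lying in the bad set. The only differences are that you argue directly, by finding a good pair $(x',y')$, where the paper argues by contradiction, and that your compactness ``fallback'' is unnecessary because the Schwarz bound is already uniform in the stated data.
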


\begin{proof}
Write $\omega=\theta+\ddbar\varphi$ with $\sup_X\varphi=0$.
Ko\l odziej's estimate bounds $\operatorname{osc}_X\varphi$. The Schwarz lemma  and the assumption
$\ric(\omega)\geq-\lambda\omega$ give, for a background constant
$C_0$,
\[
 \Delta_\omega\log\tr_\omega\theta
 \geq-C_0\tr_\omega\theta-\lambda.
\]
Since $\Delta_\omega\varphi=n-\tr_\omega\theta$, the maximum principle
applied to $\log\tr_\omega\theta-A\varphi$, with $A>C_0$, gives
\begin{equation}\label{fixed-schwarz}
 \tr_\omega\theta\leq C,\qquad\text{hence}\qquad
 \omega\geq c\theta.
\end{equation}
Combining \eqref{fixed-schwarz} with \eqref{lhold}, we obtain constants
$A_0>0$ and $\beta>0$, independent of $\omega'$, such that
\begin{equation}\label{cross-holder}
 d_{\omega'}(z,z')\leq A_0d_\omega(z,z')^\beta
 \qquad(z,z'\in X).
\end{equation}

Choose $\rho>0$ so small that
\begin{equation}\label{rho-choice}
 2\rho+2A_0\rho^\beta<\frac{\eta}{2}.
\end{equation}
Suppose, contrary to \eqref{one-sided-uniform}, that
\[
 d_{\omega'}(x,y)>d_\omega(x,y)+\eta
\]
for some $x,y\in X$. If
$x'\in B_\omega(x,\rho)$ and $y'\in B_\omega(y,\rho)$, the triangle
inequality, \eqref{cross-holder}, and \eqref{rho-choice} give
\[
\begin{split}
 d_{\omega'}(x',y')
 &>d_{\omega'}(x,y)-2A_0\rho^\beta\\
 &>d_\omega(x',y')+\eta-2\rho-2A_0\rho^\beta\\
 &>d_\omega(x',y')+\frac{\eta}{2}.
\end{split}
\]
Therefore
\begin{equation}\label{product-ball-bad}
 B_\omega(x,\rho)\times B_\omega(y,\rho)
 \subset\cB^-(\eta/2).
\end{equation}

The Ricci lower bound, the fixed total volume, and the uniform diameter
bound imply by Bishop-Gromov comparison that
\begin{equation}\label{ball-lower}
 \mu(B_\omega(z,\rho))\geq v_\rho>0
 \qquad(z\in X),
\end{equation}
where $v_\rho$ depends only on the fixed data, $\lambda$, and $\rho$.
Equations \eqref{product-ball-bad} and \eqref{ball-lower} yield
\[
 (\mu\otimes\mu)(\cB^-(\eta/2))\geq v_\rho^2.
\]
On the other hand, Lemma \ref{lem:vollow} makes the left-hand side
strictly smaller than $v_\rho^2$ once $q$, and hence $\delta$, is
sufficiently small. This contradiction proves the lemma.
\end{proof}

Applying Lemma \ref{lemformain1} with $\eta=\epsilon/2$ gives
\[
 d_{\omega'}(x,y)\leq d_\omega(x,y)+\frac{\epsilon}{2}
 <d_\omega(x,y)+\epsilon,
\]
which proves Theorem \ref{thm:main1}.

\begin{proof}[Proof of Corollary \ref{cor:main1}]
Apply Lemma \ref{lemformain1} with $\eta=\epsilon/2$ to
$(\omega,\omega')$ and then to $(\omega',\omega)$. The constants are
uniform under the common Ricci lower bound, and hence
\[
 \sup_{x,y\in X}|d_\omega(x,y)-d_{\omega'}(x,y)|
 \leq\frac{\epsilon}{2}<\epsilon.
\]
The Gromov-Hausdorff estimate follows by using the identity
correspondence on $X$.
\end{proof}


\section{Proof of Theorem \ref{thm:main2}}

We write
\[
 \omega=\theta+\ddbar\varphi,\qquad
 f=-\log\frac{\omega^n}{\theta^n},\qquad
 \sup_X\varphi=0.
\]
Thus
\begin{equation}\label{singular-ma-equation}
 (\theta+\ddbar\varphi)^n=e^{-f}\theta^n,
 \qquad f\in\PSH(X,\lambda\theta),\qquad
 \|e^{-f}\|_{L^p(X,\theta^n)}\leq K.
\end{equation}

By the smooth monotone regularization theorem of
B\l ocki-Ko\l odziej \cite[Theorem~1]{BK}, there are
\[
 f_j\in C^\infty(X)\cap\PSH(X,\lambda\theta),
 \qquad f_j\searrow f.
\]
(If $\lambda=0$, then $f$ is constant and one simply takes
$f_j=f$.)
Put
\begin{equation}\label{normalizing-epsilon}
 I_j=\int_Xe^{-f_j}\theta^n,\qquad
 \epsilon_j=\log\frac{1}{I_j},\qquad
 h_j=e^{\epsilon_j-f_j}.
\end{equation}
Because $e^{-f_j}\nearrow e^{-f}$ and
$\int_Xe^{-f}\theta^n=1$, monotone convergence gives
$I_j\nearrow 1$ and $\epsilon_j\searrow0$. Moreover,
\begin{equation}\label{normalized-density-convergence}
 \|h_j-e^{-f}\|_{L^1(X,\theta^n)}\longrightarrow0,\qquad
 \sup_j\|h_j\|_{L^p(X,\theta^n)}\leq K_0
\end{equation}
for a fixed $K_0$. Indeed, $e^{-f_j}\leq e^{-f}$ and
\[
\begin{split}
 \|h_j-e^{-f}\|_{L^1(X, \theta^n)}
 &\leq e^{\epsilon_j}\|e^{-f_j}-e^{-f}\|_{L^1(X, \theta^n)}
 +(e^{\epsilon_j}-1)\|e^{-f}\|_{L^1(X, \theta^n)},
\end{split}
\]
After discarding finitely many terms, one may take $K_0=2K$.

By Yau's theorem, there is a unique smooth normalized solution
\begin{equation}\label{maeqn2}
 \omega_j=\theta+\ddbar\varphi_j>0,\qquad
 \omega_j^n=h_j\theta^n,\qquad
 \sup_X\varphi_j=0.
\end{equation}
Ko\l odziej's uniform estimate and stability theorem, together with
\eqref{normalized-density-convergence}, give
\begin{equation}\label{potential-regularization}
 \sup_j\|\varphi_j\|_{L^\infty(X)}<\infty,\qquad
 \|\varphi_j-\varphi\|_{L^\infty(X)}\longrightarrow0.
\end{equation}
In particular, the smooth metrics $\omega_j$ genuinely regularize the
given current $\omega$.

\begin{lemma}\label{lemsch}
There are constants $c>0$ and $\Lambda>0$, independent of $j$, such
that
\begin{equation}\label{schw2}
 \omega_j\geq c\theta,
 \qquad
 \ric(\omega_j)\geq-\Lambda\omega_j.
\end{equation}
\end{lemma}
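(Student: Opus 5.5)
The plan is to prove Lemma \ref{lemsch} by a Chern--Lu/Schwarz-type maximum principle argument applied to $\tr_{\omega_j}\theta$. The PSH condition on $f_j$ gives a one-sided Ricci bound relative to $\theta$. Kołodziej's uniform estimate \eqref{potential-regularization} supplies the potential bound needed to close the argument.

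\emph{Step 1: Ricci in terms of $\theta$.} From \eqref{maeqn2} we have $\omega_j^n=e^{\epsilon_j-f_j}\theta^n$, so
\[
\ric(\omega_j)=\ric(\theta)+\ddbar f_j .
\]
Since $f_j\in\PSH(X,\lambda\theta)$, we get $\ddbar f_j\geq-\lambda\theta$. Choosing $C_1$ with $\ric(\theta)\geq -C_1\theta$, this gives
\[
\ric(\omega_j)\geq-(C_1+\lambda)\theta .
\]

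\emph{Step 2: Schwarz inequality.} Apply the Chern--Lu inequality to the identity map $(X,\omega_j)\to(X,\theta)$. Let $-B$ be a lower bound for the bisectional curvature of $\theta$. The standard computation gives
\[
\Delta_{\omega_j}\log\tr_{\omega_j}\theta\geq \frac{\langle\ric(\omega_j),\theta\rangle_{\omega_j}}{\tr_{\omega_j}\theta}-B\,\tr_{\omega_j}\theta .
\]
Here $\langle\ric(\omega_j),\theta\rangle_{\omega_j}=g_j^{i\bar l}g_j^{k\bar j}R_{i\bar j}\theta_{k\bar l}$. Because the Ricci lower bound is stated against $\theta$ rather than $\omega_j$, I will diagonalize simultaneously and use $-(C_1+\lambda)\theta$ in place of $\ric(\omega_j)$. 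This bounds the first term below by
\[
-(C_1+\lambda)\frac{|\theta|^2_{\omega_j}}{\tr_{\omega_j}\theta}\geq-(C_1+\lambda)\tr_{\omega_j}\theta,
\]
using $|\theta|^2_{\omega_j}\leq(\tr_{\omega_j}\theta)^2$. Thus
\[
\Delta_{\omega_j}\log\tr_{\omega_j}\theta\geq -C_0\tr_{\omega_j}\theta,\qquad C_0=B+C_1+\lambda .
\]
This step is the main point to check. It is where the PSH hypothesis must replace an assumed bound $\ric(\omega_j)\geq-\Lambda\omega_j$, and the sign bookkeeping in Chern--Lu with a $\theta$-relative Ricci bound is the delicate part.

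\emph{Step 3: maximum principle.} Set $H=\log\tr_{\omega_j}\theta-A\varphi_j$ with $A=C_0+1$. Since $\Delta_{\omega_j}\varphi_j=n-\tr_{\omega_j}\theta$,
\[
\Delta_{\omega_j}H\geq \tr_{\omega_j}\theta-An .
\]
At a maximum point of $H$ this forces $\tr_{\omega_j}\theta\leq An$. The uniform bound $\sup_j\|\varphi_j\|_{L^\infty}<\infty$ from \eqref{potential-regularization} then gives $\tr_{\omega_j}\theta\leq C$ on all of $X$, uniformly in $j$. Hence $\omega_j\geq c\theta$ with $c=1/C$.

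\emph{Step 4: conclusion.} Since $\theta\leq c^{-1}\omega_j$, Step 1 gives
\[
\ric(\omega_j)\geq-(C_1+\lambda)c^{-1}\omega_j,
\]
so $\Lambda=(C_1+\lambda)/c$. All constants depend only on $X,\theta,p,K,\lambda$ and are independent of $j$; in particular $K_0=2K$ from \eqref{normalized-density-convergence} is uniform.
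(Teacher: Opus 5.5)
Your proposal is correct and follows the paper's proof essentially step for step: the PSH condition gives $\ric(\omega_j)\geq-(C_1+\lambda)\theta$, and the Chern--Lu/Schwarz inequality gives $\Delta_{\omega_j}\log\tr_{\omega_j}\theta\geq-C_0\tr_{\omega_j}\theta$ (your pairing argument for the Ricci term is more explicit than the paper's). The maximum principle for $\log\tr_{\omega_j}\theta-A\varphi_j$, combined with Ko\l odziej's uniform bound on $\varphi_j$, then yields $\omega_j\geq c\theta$ and $\Lambda=(C_1+\lambda)/c$. One small correction: the Chern--Lu inequality in the form you wrote needs an \emph{upper} bound $B$ on the bisectional curvature of the target metric $\theta$, not a lower bound $-B$; such a bound exists because $\theta$ is smooth on compact $X$, so nothing else in your argument changes.
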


\begin{proof}
Since $f_j\in\PSH(X,\lambda\theta)$,
\begin{equation}\label{ricci-theta-lower}
 \ric(\omega_j)=\ric(\theta)+\ddbar f_j
 \geq\ric(\theta)-\lambda\theta\geq-C_0\theta.
\end{equation}
Set $S_j=\tr_{\omega_j}\theta$. The Schwarz lemma, using the
fixed upper bound for the holomorphic bisectional curvature of
$\theta$ and \eqref{ricci-theta-lower}, gives
\begin{equation}\label{chern-lu-main2}
 \Delta_{\omega_j}\log S_j\geq-C_1S_j
\end{equation}
with $C_1$ independent of $j$. Since
$\Delta_{\omega_j}\varphi_j=n-S_j$, for $A>C_1$ we have
\[
 \Delta_{\omega_j}
 \bigl(\log S_j-A\varphi_j\bigr)
 \geq(A-C_1)S_j-An.
\]
At a maximum point of $\log S_j-A\varphi_j$, this bounds $S_j$.
Comparing the maximum with an arbitrary point and using
\eqref{potential-regularization} then gives
$\sup_XS_j\leq C$. Hence $\omega_j\geq c\theta$. Combining this with
\eqref{ricci-theta-lower} gives
$\ric(\omega_j)\geq-(C_0/c)\omega_j$; take
$\Lambda=C_0/c$.
\end{proof}

Passing to the weak limit in $\omega_j\geq c\theta$ also gives
\begin{equation}\label{singular-current-lower}
 \omega\geq c\theta.
\end{equation}

\begin{lemma}\label{lem:canonical-distance}
The distances $d_{\omega_j}$ converge uniformly on $X\times X$ to a
distance $d_\omega$. The limit is independent of every admissible
smooth regularization.
\end{lemma}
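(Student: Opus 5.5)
The plan is to apply Corollary \ref{cor:main1} directly to the regularizing sequence, obtain a uniform Cauchy property for the distances, and then take the limit. By Lemma \ref{lemsch}, every $\omega_j$ lies in $\cK_\theta(2K,\dots)$; more precisely, $\omega_j\in\cK_\theta(p,K_0)$ with $K_0=2K$ by \eqref{normalized-density-convergence}. Each $\omega_j$ also has $\ric(\omega_j)\geq-\Lambda\omega_j$, with $\Lambda$ independent of $j$. From \eqref{normalized-density-convergence}, $\|\omega_j^n-\omega_k^n\|_{L^1(X)}=\|h_j-h_k\|_{L^1(X,\theta^n)}\to0$ as $j,k\to\infty$. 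Given $\epsilon>0$, let $\delta=\delta(X,\theta,p,K_0,\Lambda,\epsilon)$ be the constant from Corollary \ref{cor:main1}. For $j,k$ large we then get $\sup_{X\times X}|d_{\omega_j}-d_{\omega_k}|<\epsilon$, so $d_{\omega_j}$ is uniformly Cauchy in $C^0(X\times X)$. Call its uniform limit $d_\omega$.

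Next I show that $d_\omega$ is a genuine distance. Symmetry and the triangle inequality pass to uniform limits, so the only issue is positivity. By Lemma \ref{lemsch}, $\omega_j\geq c\theta$, which gives $d_{\omega_j}(x,y)\geq \sqrt c\, d_\theta(x,y)$ for every $j$. Hence $d_\omega\geq\sqrt c\,d_\theta$, and $d_\omega$ separates points. Conversely, \eqref{lhold} gives $d_{\omega_j}\leq C d_\theta^\beta$ uniformly in $j$, and this bound also passes to the limit. These two bounds give the topological statements later, but for this lemma positivity is all that is needed.

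For independence of the regularization, take another admissible smooth sequence $\omega'_k$. Admissible here means smooth Kähler metrics in $[\theta]$ with a common $L^p$ bound $K'$, a common Ricci lower bound $-\Lambda'$, and $\|(\omega_k')^n-\omega^n\|_{L^1}\to0$. Replace the data by $\max(K_0,K')$ and $\max(\Lambda,\Lambda')$. Then $\|\omega_j^n-(\omega'_k)^n\|_{L^1}\leq\|h_j-e^{-f}\|_{L^1}+\|(\omega_k')^n-\omega^n\|_{L^1}\to0$. Corollary \ref{cor:main1}, applied with these common constants, gives $\sup|d_{\omega_j}-d_{\omega'_k}|<\epsilon$ for $j,k$ large. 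So both sequences have the same uniform limit. A further point is that $d_\omega$ does not depend on the choice of $f_j$: two Błocki--Kołodziej sequences are both admissible, so this case is covered by the same argument.

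The main obstacle is making sure Corollary \ref{cor:main1} really applies with constants uniform along the sequences. This requires a common $L^p$ bound, which the normalization $h_j$ and \eqref{normalized-density-convergence} provide. It also requires a common Ricci lower bound on \emph{both} metrics, which is why admissibility must include it. The Ricci bound comes from Lemma \ref{lemsch} through the Schwarz lemma and the uniform $L^\infty$ control \eqref{potential-regularization}. Once these uniform constants are fixed, the rest is the formal Cauchy argument above.
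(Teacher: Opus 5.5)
Your proposal is correct and follows essentially the same route as the paper. The paper gets the uniform Cauchy property by applying Theorem \ref{thm:main1} in both directions (which is exactly Corollary \ref{cor:main1}), obtains positivity from $\omega_j\geq c\theta$, and proves independence by the $L^1$ triangle inequality with the common constants $\max(K_0,\widehat K)$ and $\max(\Lambda,\widehat\Lambda)$; the only slip in your write-up is briefly crediting the $L^p$ bound to Lemma \ref{lemsch} instead of \eqref{normalized-density-convergence}, which you correct in the same sentence.
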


\begin{proof}
Equation \eqref{normalized-density-convergence} says precisely that
\begin{equation}\label{tv-current-approximation}
 \|\omega_j^n-\omega^n\|_{L^1(X)}\longrightarrow0.
\end{equation}
All $\omega_j$ have the common $L^p$ bound $K_0$ and Ricci lower bound
in \eqref{schw2}. Given $\eta>0$, apply Theorem
\ref{thm:main1} first to $(\omega_j,\omega_k)$ and then with the two
metrics interchanged. The constant is uniform in $j,k$, and
\eqref{tv-current-approximation} therefore gives
\[
 \sup_{X\times X}|d_{\omega_j}-d_{\omega_k}|\longrightarrow0.
\]
Let $d_\omega$ be the uniform limit. Symmetry and the triangle
inequality pass to the limit, while \eqref{schw2} gives
\begin{equation}\label{limit-distance-lower}
 d_\omega\geq\sqrt c\,d_\theta.
\end{equation}
Thus $d_\omega$ separates points.

Here an admissible smooth regularization means a sequence of smooth
K\"ahler metrics $\widehat\omega_k\in[\theta]$ whose volume densities
have a common $L^p$ bound $\widehat K$, whose Ricci curvatures have a common lower
bound
$\ric(\widehat\omega_k)\geq-\widehat\Lambda\widehat\omega_k$, and for
which
\[
 \|\widehat\omega_k^n-\omega^n\|_{L^1(X)}\longrightarrow0.
\]
For such a sequence,
\[
 \|\widehat\omega_k^n-\omega_j^n\|_{L^1(X)}
 \leq\|\widehat\omega_k^n-\omega^n\|_{L^1(X)}
 +\|\omega_j^n-\omega^n\|_{L^1(X)}\longrightarrow0.
\]
Set
$K_*:=\max\{K_0,\widehat K\}$ and
$\Lambda_*:=\max\{\Lambda,\widehat\Lambda\}$.  Theorem
\ref{thm:main1}, applied in both directions with these common
constants, shows that for every $\eta>0$ there are $j_0,k_0$ such
that
\[
 \sup_{X\times X}
 |d_{\widehat\omega_k}-d_{\omega_j}|<\eta
 \qquad(j\geq j_0,\ k\geq k_0).
\]
Letting first $j\to\infty$ shows that
$d_{\widehat\omega_k}\to d_\omega$ uniformly. This proves the asserted
independence. If $\omega$ is smooth, the constant sequence
$\widehat\omega_k=\omega$ shows that $d_\omega$ is its ordinary
Riemannian distance.
\end{proof}

\begin{lemma}\label{thm2pf}
With $\mu= \omega^n$,
\begin{equation}\label{main2-mgh}
 (X,d_{\omega_j}, \omega_j^n)
 \longrightarrow (X,d_\omega,\mu)
\end{equation}
in measured Gromov-Hausdorff topology. The limit is a compact
$\RCD(-\Lambda,2n)$ space homeomorphic to $X$.
\end{lemma}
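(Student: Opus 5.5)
The plan has three parts: establish measured Gromov--Hausdorff convergence via the identity map, transfer the RCD condition through the stability theorem, and show the identity map is a homeomorphism with the stated regularity.

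\emph{Measured Gromov--Hausdorff convergence.} Take the identity map $X\to X$ as the approximating map. By Lemma \ref{lem:canonical-distance}, $\epsilon_j:=\sup_{X\times X}|d_{\omega_j}-d_\omega|\to0$. Hence the identity is an $\epsilon_j$-isometry from $(X,d_{\omega_j})$ onto $(X,d_\omega)$, and so $\dgh\to0$.

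For the measures, $\omega_j^n=h_j\theta^n$ and $\mu=e^{-f}\theta^n$. By \eqref{normalized-density-convergence}, these differ in total variation by $o(1)$. Pushforward under the identity is trivial, so $\omega_j^n\to\mu$ in total variation. This is stronger than weak convergence against continuous functions. The functions must be continuous for $d_\omega$, but $d_\omega$ induces the same topology as $d_\theta$ by the next step, so the class of continuous functions is the same.

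\emph{RCD property.} Each $(X,d_{\omega_j},\omega_j^n)$ is a smooth compact Riemannian manifold of real dimension $2n$. By Lemma \ref{lemsch} it has $\ric\ge-\Lambda$ with $\Lambda$ independent of $j$, so it is an $\RCD(-\Lambda,2n)$ space. The masses are all equal to $1$ and the diameters are uniformly bounded by \eqref{lhold}. Stability of the $\RCD(K,N)$ condition under measured Gromov--Hausdorff convergence (Ambrosio--Gigli--Savar\'e, Erbar--Kuwada--Sturm, Gigli--Mondino--Savar\'e) then gives that the limit $(X,d_\omega,\mu)$ is $\RCD(-\Lambda,2n)$. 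The limit measure $\mu$ has full support because $\mu$ has a positive density a.e. with respect to $\theta^n$. One could also argue full support from the uniform Bishop--Gromov lower bound \eqref{ball-lower}, which passes to the limit.

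\emph{Homeomorphism.} Lipschitz continuity in the direction $d_\omega\ge\sqrt c\,d_\theta$ is \eqref{limit-distance-lower}. The H\"older bound in the other direction, $d_{\omega_j}\le Cd_\theta^\beta$, holds uniformly by \eqref{lhold}, since all $\omega_j\in\cK_\theta(p,K_0)$. Passing to the limit gives $d_\omega\le Cd_\theta^\beta$. So the identity $(X,d_\omega)\to(X,d_\theta)$ is Lipschitz, and its inverse is H\"older. Both maps are therefore continuous, and the identity is a homeomorphism between the two compact metric spaces.

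(The statement in Theorem \ref{thm:main2} phrases the direction as ``from $(X,d_\theta)$ to $(X,d_\omega)$''. With the inequalities above, the Lipschitz map is actually $d_\omega\to d_\theta$; I will state the directions as the estimates give them.)

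The main obstacle is justifying the RCD stability theorem for the sequence. This requires uniform bounds on the lower Ricci bound, the dimension, the diameter and the total mass, together with genuine pmGH convergence including the measures. All of these are supplied by Lemma \ref{lemsch}, \eqref{lhold} and \eqref{normalized-density-convergence}, so I expect this step to be a careful assembly of those inputs rather than new analysis.
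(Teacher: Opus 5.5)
Your proposal is correct and follows the paper's proof essentially step for step. You pass $\sqrt c\,d_\theta\le d_\omega\le C d_\theta^\beta$ to the limit to get the homeomorphism, you use the identity correspondence with vanishing distortion together with total-variation convergence of the measures for measured Gromov--Hausdorff convergence, and you invoke stability of the RCD condition \cite{GMS}. Your full-support argument via the positive density $e^{-f}$ works just as well as the paper's Bedford--Taylor bound $\omega^n\geq c^n\theta^n$.

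Your remark on directions is also correct. These estimates make the identity $(X,d_\omega)\to(X,d_\theta)$ Lipschitz and its inverse H\"older, which is the reverse of how Theorem \ref{thm:main2} phrases it.
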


\begin{proof}
First, \eqref{schw2} and the uniform distance H\"older estimate
\eqref{lhold} pass to the limit:
\begin{equation}\label{limit-bi-holder}
 \sqrt c\,d_\theta(x,y)
 \leq d_\omega(x,y)
 \leq C d_\theta(x,y)^\beta.
\end{equation}
The identity maps in both directions are therefore continuous, so
$(X,d_\omega)$ is homeomorphic to $(X,d_\theta)$ and, in particular,
is compact.  Moreover, writing $\omega=c\theta+T$ with $T\geq0$, the
Bedford-Taylor products of these currents with bounded potentials
satisfy
\[
 \omega^n=(c\theta+T)^n\geq c^n\theta^n.
\]
Thus $\mu= \omega^n$ is a Borel probability measure with full
support for the $d_\omega$ topology.

The identity correspondences now have distortion tending to zero by
Lemma \ref{lem:canonical-distance}, and the normalized measures
converge in total variation by \eqref{tv-current-approximation}. This
proves \eqref{main2-mgh}. Each smooth metric-measure space is
$\RCD(-\Lambda,2n)$ by \eqref{schw2}; stability of the RCD condition
\cite{GMS} gives the same bound for $(X,d_\omega,\mu)$.
\end{proof}

Lemmas \ref{lem:canonical-distance} and \ref{thm2pf} prove Theorem
\ref{thm:main2}.

\begin{proof}[Proof of Corollary \ref{cor:main2}]
Let $K_0$ and $\Lambda$ be the uniform constants in the preceding
construction, enlarged so that $K_0\geq K$. Apply Theorem
\ref{thm:main1}, with reference metrics satisfying
$\ric\geq-\Lambda\omega$, to obtain
$\delta_0=\delta_0(X,\theta,n,p,K_0,\Lambda,\epsilon/3)>0$.
Choose $j$ sufficiently large that
\[
 \sup_{X\times X}|d_{\omega_j}-d_\omega|<\frac{\epsilon}{3},
 \qquad
 \|\omega_j^n-\omega^n\|_{L^1(X)}<\frac{\delta_0}{2}.
\]
If
$\|(\omega')^n-\omega^n\|_{L^1(X)}<\delta_0/2$, then
$\|(\omega')^n-\omega_j^n\|_{L^1(X)}<\delta_0$. Theorem
\ref{thm:main1}, with $\omega_j$ as the reference metric, gives
\[
 d_{\omega'}(x,y)
 <d_{\omega_j}(x,y)+\frac{\epsilon}{3}
 <d_\omega(x,y)+\epsilon.
\]
Thus the assertion holds with $\delta=\delta_0/2$.
\end{proof}

\section{Proof of Theorem \ref{thm:main2'}}

We will prove Theorem \ref{thm:main2'} in this section. First, we recall the following construction of cut-off functions give vanishing $W^{1,2}$-capacity for analytic subvarieties in $X$ \cite{So14, FGS2}.  
\begin{lemma}\label{lem:analytic-set-cutoffs}
Let $Z\subset X$ be a proper analytic subset and let
$\gamma=\theta+\ddbar\psi$ be a positive current with bounded
potential.  Suppose that $\gamma\geq c\theta$ on $X$ and that
$\gamma$ is a smooth K\"ahler metric on $U=X\setminus Z$.  There
are functions $\chi_k\in C_c^\infty(U)$ such that
\begin{equation}
 0\leq\chi_k\leq1,\qquad
 \chi_k\equiv1\quad\hbox{on every }V \Subset U
 \hbox{ for all sufficiently large }k,
\end{equation}
and
\begin{equation}\label{general-analytic-set-cutoff-energy}
 \int_U\left(
 |\nabla\chi_k|_\gamma^2+|\Delta_\gamma\chi_k|
 \right)\gamma^n\longrightarrow0.
\end{equation}
\end{lemma}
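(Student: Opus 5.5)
We will use the classical cut-off construction for analytic subsets: cut off in a logarithmically scaled tubular neighbourhood of $Z$. Because $\gamma\geq c\theta$, gradient norms are controlled, $|\nabla\chi|_\gamma^2\leq c^{-1}|\nabla\chi|_\theta^2$. The Laplacian term is the genuinely $\gamma$-dependent part, and we will handle it by integration by parts against $\gamma^{n-1}$.

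\textbf{Step 1 (exhaustion function).} First cover $X$ by finitely many coordinate charts in which $Z$ is contained in the zero set of holomorphic functions $s^{(\alpha)}_1,\dots,s^{(\alpha)}_{m_\alpha}$. Glue these with a partition of unity to obtain a function $\rho\geq0$ on $X$ with the following properties:
\begin{itemize}
\item $\rho$ is smooth on $U$ and vanishes exactly on $Z$;
\item $\log\rho$ is quasi-psh, i.e.\ $\ddbar\log\rho\geq -C\theta$;
\item $\rho\leq 1/e$, after rescaling.
\end{itemize}
A natural choice is $\rho=\sum_\alpha \eta_\alpha\sum_i|s^{(\alpha)}_i|^2$.

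Next set $u=\log(-\log\rho)$, which tends to $+\infty$ on $Z$. Fix a cut-off $\zeta\in C^\infty(\mathbb R)$ with $\zeta=1$ on $(-\infty,1]$ and $\zeta=0$ on $[2,\infty)$, and define
\[
\chi_k=\zeta(u/k).
\]
Each $\chi_k$ is compactly supported in $U$. Every $V\Subset U$ has bounded $u$, so $\chi_k\equiv1$ on $V$ for large $k$.

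\textbf{Step 2 (gradient term).} We compute $\partial u=\partial\log\rho/\log\rho$. Since $\log\rho$ is quasi-psh, the function $w=-\log(-\log\rho)$ is also quasi-psh up to bounded error. Indeed,
\[
\ddbar w = \frac{\ddbar\log\rho}{-\log\rho}+\frac{\sqrt{-1}\partial\log\rho\wedge\dbar\log\rho}{(\log\rho)^2},
\]
which gives the key pointwise bound
\[
\sqrt{-1}\partial u\wedge\dbar u\leq \ddbar w+C\theta .
\]
We then estimate
\[
\int_U|\nabla\chi_k|^2_\gamma\gamma^n\leq \frac{C}{k^2}\int_{\{k\le u\le2k\}}n\sqrt{-1}\partial u\wedge\dbar u\wedge\gamma^{n-1}.
\]
To bound the right-hand side, integrate $\ddbar w\wedge\gamma^{n-1}$ against a slightly larger cut-off, or use Bedford--Taylor theory: $\gamma$ has bounded potential and $w$ is bounded above with $\theta$-psh-type control. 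The Chern--Levine--Nirenberg inequality then bounds
\[
\int_X(\ddbar w+C\theta)\wedge\gamma^{n-1}
\]
uniformly. Note that $w$ is unbounded below, but $\max(w,-2k)$ is bounded and the mass is cohomological. This yields an $O(1/k^2)$ bound for the gradient term, or $O(1/k)$ if we are careful. A cleaner alternative is the choice $\chi_k=\zeta(\epsilon_k u)$ with $\epsilon_k\to0$, which is standard in \cite{So14}.

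\textbf{Step 3 (Laplacian term).} We have
\[
\Delta_\gamma\chi_k=k^{-1}\zeta'\Delta_\gamma u+k^{-2}\zeta''|\nabla u|^2_\gamma .
\]
The second term is handled exactly as in Step 2. For the first term, split $\ddbar u=-\ddbar w$ into a positive part and a bounded part:
\[
|\Delta_\gamma u|\,\gamma^n\leq n(\ddbar w+C\theta)\wedge\gamma^{n-1}+Cn\,\theta\wedge\gamma^{n-1}.
\]
Both terms have uniformly bounded total mass by Chern--Levine--Nirenberg, again using that $\gamma$ has bounded potential. Multiplying by $|\zeta'|/k\leq C/k$ then gives $O(1/k)\to0$.

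\textbf{Main obstacle.} The main difficulty is justifying these mass bounds rigorously. The current $\gamma$ is only smooth on $U$, and $w$ is unbounded near $Z$. We will resolve this by working with the truncations $\max(w,-3k)$, which are bounded quasi-psh functions agreeing with $w$ on the support of $\zeta'(u/k)$. Their Monge--Amp\`ere-type masses $(\ddbar\max(w,-3k)+C\theta)\wedge\gamma^{n-1}$ are cohomological and hence independent of $k$. Combined with $\gamma\geq c\theta$ for the gradient comparison, this completes the proof.
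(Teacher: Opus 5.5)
Your proposal is correct and is essentially the standard $\log(-\log\rho)$ cut-off construction of \cite{So14, FGS2}, which the paper cites for this lemma in place of a proof. On $\{k\le u\le 2k\}$ you dominate both the gradient and Laplacian terms by the positive measures $(\ddbar\max(w,-3k)+C\theta)\wedge\gamma^{n-1}$ and $\theta\wedge\gamma^{n-1}$, whose total masses are cohomological because $\gamma$ has bounded potential, and this gives the $O(1/k)$ decay.

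Only cosmetic fixes are needed:
\begin{enumerate}
\item In Step 1, use local generators of the ideal sheaf of $Z$, rather than arbitrary functions vanishing on $Z$, so that $\rho$ vanishes exactly on $Z$ and $\log\rho$ is quasi-psh.
\item The opening comparison $|\nabla\chi|_\gamma^2\le c^{-1}|\nabla\chi|_\theta^2$ is never used and could be dropped.
\item The global mass bound comes from Stokes' theorem and cohomological invariance of Bedford--Taylor products, not from Chern--Levine--Nirenberg.
\end{enumerate}
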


\begin{theorem} 
\label{thm:current-bochner}
Let $(X,\theta)$ be a compact K\"ahler manifold of complex dimension
$n$, and let
\begin{equation}
 \omega=\theta+\ddbar\varphi\in
 \overline{\cK_\theta(p,K;\lambda)}.
\end{equation}
Write
\begin{equation}
 f=-\log\frac{\omega^n}{\theta^n},\qquad
 \int_X\omega^n=1,
 \qquad \mu= \omega^n,
\end{equation}
and let $(X,d_\omega,\mu)$ be the canonical metric measure space
given by Theorem \ref{thm:main2}.  If
\begin{equation}\label{ric-current-nonnegative}
 \ric(\omega)=\ric(\theta)+\ddbar f\geq0
\end{equation}
as a current, then $(X,d_\omega,\mu)$ is an
$\RCD(0,2n)$ space.

\end{theorem}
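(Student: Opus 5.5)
The plan is to approximate $\omega$ by currents whose Ricci lower bounds tend to $0$ and then use stability of the RCD condition under measured Gromov--Hausdorff convergence \cite{GMS}. Theorem \ref{thm:main2} alone only gives $\RCD(-\Lambda,2n)$. The Blocki--Kolodziej regularization used there keeps $f_j\in\PSH(X,\lambda\theta)$, but it does not preserve the finer condition $\ric(\theta)+\ddbar f\geq 0$. The reason is that $\ric(\theta)$ is not a Kähler form, so a smooth regularization of the closed positive current $\ric(\theta)+\ddbar f$ generally loses positivity in proportion to its Lelong numbers.

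\textbf{Step 1: regularization with analytic singularities.} I would apply Demailly's regularization with analytic singularities to $T=\ric(\theta)+\ddbar f\geq 0$. This gives quasi-psh functions $f_j$ with the following properties:
\begin{itemize}
\item $f_j$ is smooth on $U_j=X\setminus Z_j$, where $Z_j$ is a proper analytic subset;
\item $f_j$ is decreasing to $f$;
\item $\ric(\theta)+\ddbar f_j\geq-\epsilon_j\theta$ with $\epsilon_j\to0$.
\end{itemize}
Since $f_j\geq f$, we have $e^{-f_j}\leq e^{-f}$, which gives a uniform $L^p$ bound. After normalizing as in \eqref{normalizing-epsilon}, the densities converge in $L^1$, and $f_j\in\PSH(X,\lambda'\theta)$ for a uniform $\lambda'$. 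I then solve $\omega_j^n=e^{\epsilon_j'-f_j}\theta^n$ and obtain $\omega_j\in\overline{\cK_\theta(p,K_0;\lambda')}$. By the Schwarz lemma argument of Lemma \ref{lemsch}, $\omega_j\geq c\theta$ with $c$ uniform. Local regularity of the Monge--Ampère equation on $U_j$, where the right-hand side is smooth, shows that $\omega_j$ is a smooth Kähler metric on $U_j$ with $\ric(\omega_j)\geq-\epsilon_j\theta\geq-(\epsilon_j/c)\,\omega_j$ there.

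\textbf{Step 2: each $(X,d_{\omega_j},\omega_j^n)$ is $\RCD(-\epsilon_j/c,2n)$.} This is the step I expect to be the main obstacle. Theorem \ref{thm:main2} already shows that $(X,d_{\omega_j},\omega_j^n)$ is an $\RCD(-\Lambda,2n)$ space homeomorphic to $X$, so it is infinitesimally Hilbertian and the Sobolev-to-Lipschitz property holds. By the Bakry--Émery characterization, it then suffices to verify the weak Bochner inequality with constant $-\epsilon_j/c$.

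I would argue as follows. Since $Z_j$ is an analytic set of $\mu_j$-measure zero, and since $d_{\omega_j}$ restricted to $U_j$ agrees with the length metric of the smooth metric $\omega_j|_{U_j}$ (using $\omega_j\geq c\theta$ and the Hölder bound \eqref{limit-bi-holder}), the Cheeger energy of the space is the closure of the smooth Dirichlet energy on $U_j$. Take test functions $u$ in the domain of the Laplacian and a nonnegative test function $\phi$. Multiply $\phi$ by the cutoffs $\chi_k$ from Lemma \ref{lem:analytic-set-cutoffs}, applied with $\gamma=\omega_j$. Then use the classical Bochner formula for $\omega_j$ on $U_j$ and integrate by parts. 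The error terms are of the form
\[
\int_{U_j}\bigl(|\nabla\chi_k|^2+|\Delta\chi_k|\bigr)\omega_j^n ,
\]
multiplied by bounds on $|\nabla u|^2$, and they tend to $0$ by \eqref{general-analytic-set-cutoff-energy}. Some care is needed to keep $|\nabla u|$ bounded, for instance by using the Bakry--Émery $L^\infty$ gradient estimate available from the $\RCD(-\Lambda,2n)$ structure, or by working first with heat-flow regularized functions. This yields $\mathrm{BE}(-\epsilon_j/c,2n)$, and hence $\RCD(-\epsilon_j/c,2n)$.

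\textbf{Step 3: passage to the limit.} Since $\|\omega_j^n-\omega^n\|_{L^1}\to0$, with uniform $L^p$ bounds and uniform Ricci lower bounds, the two-sided version of Corollary \ref{cor:main1} extended to $\overline{\cK_\theta(p,K;\lambda)}$ (via Lemma \ref{lem:canonical-distance} and Corollary \ref{cor:main2}) gives $d_{\omega_j}\to d_\omega$ uniformly on $X\times X$. As in Lemma \ref{thm2pf}, it follows that $(X,d_{\omega_j},\omega_j^n)\to(X,d_\omega,\mu)$ in the measured Gromov--Hausdorff sense. Because $\epsilon_j/c\to0$, stability of the RCD condition \cite{GMS} shows that $(X,d_\omega,\mu)$ is $\RCD(0,2n)$.
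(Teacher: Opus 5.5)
Your proposal is correct and follows essentially the same route as the paper. The paper likewise uses Demailly regularization with analytic singularities, a bounded solution $\omega_j\geq c\theta$ that is smooth on $X\setminus Z_j$ with $\ric(\omega_j)\geq -c^{-1}\varepsilon_j\omega_j$ there, carries the Bochner inequality across $Z_j$ with the cutoffs of Lemma \ref{lem:analytic-set-cutoffs}, and concludes with uniform distance convergence and stability of RCD \cite{GMS}. The only differences are that the paper packages your Step 2 by citing Honda's almost-smooth criterion \cite{Honda}, using exactly the inputs you list from $\RCD(-\Lambda,2n)$, and that it justifies smoothness of $\omega_j$ on the regular locus through an explicit inner truncation $M_k(f_j)$ rather than a bare appeal to local regularity.
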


\begin{proof}
We retain analytic singularities during the regularization.  This is
essential because the class $c_1(X)$ is only assumed to contain the
positive current in \eqref{ric-current-nonnegative}, and need not be
nef.

By the Bergman-kernel form of Demailly's regularization theorem
\cite[Proposition 3.7]{Dem92}, there are numbers
$\varepsilon_j\searrow0$ and
quasi-plurisubharmonic functions $f_j$ with analytic singularities
such that
\begin{equation}\label{demailly-current-approximation}
 \ric(\theta)+\ddbar f_j\geq-\varepsilon_j\theta,
 \qquad f_j\longrightarrow f\quad\hbox{almost everywhere and in }L^1,
\end{equation}
and
\begin{equation}\label{demailly-lower-control}
 f_j\geq f.
\end{equation}
In particular,
\begin{equation}
 e^{-p f_j}\leq e^{-p f}.
\end{equation}
Consequently the functions $e^{-f_j}$ are uniformly bounded in
$L^p(X,\theta^n)$ and converge to $e^{-f}$ in $L^1(X,\theta^n)$.

Let $b_j$ be determined by
\begin{equation}
 e^{b_j}\int_Xe^{-f_j}\theta^n=1.
\end{equation}
Then $b_j\to0$.  Let $\varphi_j\in\PSH(X,\theta)\cap L^\infty(X)$,
normalized by $\sup_X\varphi_j=0$, solve
\begin{equation}\label{analytic-singularity-ma}
 \omega_j^n=(\theta+\ddbar\varphi_j)^n
 =e^{-f_j+b_j}\theta^n.
\end{equation}
Let $Z_j$ be the analytic singular set of $f_j$ and put
$U_j=X\setminus Z_j$.  We justify the estimates and the regularity of
$\omega_j$ using a specific inner regularization.  Choose smooth,
convex, nondecreasing functions $M_k\colon\mathbb R\to\mathbb R$ such
that
\[
 0\leq M_k'\leq1,\qquad M_k''\geq0,
 \qquad M_k(t)\geq t,
\]
with $M_k$ constant on $(-\infty,-k-1]$ and $M_k(t)=t$ for
$t\geq-k+1$.  Since $f_j$ has analytic singularities,
\[
 f_{j,k}:=M_k(f_j)
\]
extends smoothly across $Z_j$ and equals $f_j$ on every compact
subset of $U_j$ for all sufficiently large $k$.  A fixed constant
$B$, independent of $j,k$, exists by
\eqref{demailly-current-approximation} and the fixed upper bound for
$\ric(\theta)$ with respect to $\theta$; moreover,
\[
 \ddbar f_{j,k}
 =M_k'(f_j)\ddbar f_j
  +M_k''(f_j)\sqrt{-1}\partial f_j\wedge\dbar f_j
 \geq-B\theta.
\]
Moreover, $f_{j,k}\geq f_j$, so the $L^p$ bounds are uniform.

We  solve the following smooth equations
$$\omega_{j,k}^n=e^{-f_{j,k}+b_{j,k}}\theta^n, ~\sup_X\varphi_{j,k}=0$$ ~
with the normalizations given by
$$ e^{b_{j,k}}\int_Xe^{-f_{j,k}}\theta^n=1 ,$$
where $\omega_{j,k}=\theta+\ddbar\varphi_{j,k}>0$.
We have 
$b_{j,k}\to b_j$ by dominated convergence, and after passing to a tail, the normalized
right hand sides still satisfy a common $L^p$ bound.   Ko\l odziej's estimates
and Schwarz lemma used in Theorem \ref{thm:main2} therefore give
constants $C>0$ and $c>0$, independent of $j,k$, such that
\[
 \|\varphi_{j,k}\|_{L^\infty(X)}\leq C,
 \qquad \omega_{j,k}\geq c\theta.
\]
Letting $k\to\infty$ and using Ko\l odziej stability gives
\begin{equation}\label{analytic-approximation-estimates}
 \|\varphi_j\|_{L^\infty(X)}\leq C,
 \qquad \omega_j\geq c\theta.
\end{equation}
The stability theorem for the complex Monge-Amp\`ere equation gives
\begin{equation}\label{analytic-approximation-potential-convergence}
 \|\varphi_j-\varphi\|_{L^\infty(X)}\longrightarrow0.
\end{equation}

If $V\Subset V'\Subset U_j$, the functions $f_{j,k}$ eventually equal
$f_j$ on $V'$ and $b_{j,k}\to b_j$, so the right hand sides converge
there in $C^\infty$.  The bound $\omega_{j,k}\geq c\theta$ and the
determinant equation give uniform ellipticity on $V'$.  Local
Evans-Krylov and Schauder estimates therefore give smooth convergence
of $\omega_{j,k}$ to $\omega_j$ on $V$.  In particular, $\omega_j$ is
smooth on $U_j$.
Equations \eqref{demailly-current-approximation} and
\eqref{analytic-approximation-estimates} imply there that
\begin{equation}\label{regular-locus-ricci}
 \ric(\omega_j)=\ric(\theta)+\ddbar f_j
 \geq-\varepsilon_j\theta
 \geq-c^{-1}\varepsilon_j\omega_j.
\end{equation}

We next explain why this regular-locus estimate gives the same
synthetic estimate on the completion.  Apply Theorem
\ref{thm:main2} to $\omega_j$.  Indeed, if
$\ric(\theta)\leq A\theta$ and $\varepsilon_j\leq1$, then
\begin{equation}
 \ddbar f_j\geq-(A+1)\theta,
\end{equation}
so the $\PSH$ and $L^p$ constants needed there are uniform in $j$.
It follows that the resulting canonical spaces
\begin{equation}
 (X,d_j,\mu_j),\qquad \mu_j= \omega_j^n,
\end{equation}
are $\RCD(-\Lambda,2n)$ for one constant $\Lambda$ independent of
$j$.

We also record why $d_j$ is locally induced by the smooth metric
$\omega_j$ on $U_j$.  Let $\omega_{j,k}$ be the smooth metrics just
used and choose open sets $W'\Subset W\Subset U_j$.  They satisfy
$\omega_{j,k}\geq c\theta$ globally and converge smoothly to
$\omega_j$ on $W$.  Thus every curve from $W'$ to $X\setminus W$
has $\omega_{j,k}$-length at least
\begin{equation}
 \sqrt{c}\,d_\theta(W',X\setminus W)>0,
\end{equation}
whereas sufficiently close points of $V$ can be joined inside $W$
with uniformly smaller length.  Consequently no minimizing curve
between such points leaves $W$.  Passing to the uniform limit of the
distance functions shows that $d_j$ agrees locally with the
Riemannian distance of $\omega_j$.

The analytic set $Z_j$ has zero capacity for this almost-smooth
structure by Lemma \ref{lem:analytic-set-cutoffs}.  Thus there are
logarithmic cutoffs
$\chi_{j,k}\in C_c^\infty(U_j)$ which satisfy
\begin{equation}\label{analytic-set-cutoffs}
 0\leq\chi_{j,k}\leq1,\qquad
 \chi_{j,k}\equiv1\quad\hbox{on every } V\Subset U_j
 \hbox{ for all sufficiently large }k,
\end{equation}
and
\begin{equation}\label{analytic-set-cutoff-energy}
 \int_{U_j}\left(
 |\nabla\chi_{j,k}|_{\omega_j}^2+
 |\Delta_{\omega_j}\chi_{j,k}|
 \right)d\mu_j\longrightarrow0.
\end{equation}
Since the Monge-Amp\`ere measure of a bounded potential does not
charge pluripolar sets, we also have $\mu_j(Z_j)=0$.

The $\RCD(-\Lambda,2n)$ property supplies the
Sobolev-to-Lipschitz property, compactness of
$W^{1,2}\hookrightarrow L^2$, and Lipschitz regularity of every
eigenfunction.  Rescale $\mu_j$ by the fixed positive factor which
identifies its restriction to $U_j$ with Riemannian Hausdorff
measure.  Honda's almost-smooth criterion
\cite[Corollary 3.10]{Honda}, applied with
\eqref{regular-locus-ricci}-\eqref{analytic-set-cutoff-energy},
improves the lower bound to
\begin{equation}\label{analytic-approximation-rcd}
 (X,d_j,\mu_j)\text{ is }\RCD(-c^{-1}\varepsilon_j,2n).
\end{equation}
Returning to $\mu_j= \omega_j^n$ causes no issue, since the
criterion and the RCD condition are invariant under multiplying the
reference measure by a positive constant.
This step is the weak Bochner passage across $Z_j$: the classical
Bochner inequality is used on $U_j$, and the logarithmic cutoff error
vanishes by \eqref{analytic-set-cutoff-energy}.

By \eqref{analytic-singularity-ma} and dominated convergence,
\begin{equation}
 \|\mu_j-\mu\|_{L^1(X)}\longrightarrow0.
\end{equation}
We now justify the current-current form of Corollary
\ref{cor:main1} used here.  Choose diagonal smooth regularizations
$\widehat\omega_j$ of $\omega_j$ and $\widetilde\omega_j$ of
$\omega$ so that their total-variation and uniform distance errors
relative to the corresponding canonical spaces are at most $j^{-1}$.
All these smooth metrics lie in one fixed
$\cK_\theta(p,K';\lambda')$.  Since
$\|\mu_j-\mu\|_{L^1(X)}\to0$, Corollary
\ref{cor:main1}, applied to
$\widehat\omega_j$ and $\widetilde\omega_j$, and then the triangle
inequality give
\begin{equation}\label{analytic-approximation-distance-convergence}
 \sup_{x,y\in X}|d_j(x,y)-d_\omega(x,y)|\longrightarrow0.
\end{equation}
Hence $(X,d_j,\mu_j)$ converges to $(X,d_\omega,\mu)$ in measured
Gromov-Hausdorff topology.  Stability of finite-dimensional RCD
spaces \cite{GMS}, together with
$c^{-1}\varepsilon_j\to0$, proves that the limit is
$\RCD(0,2n)$.   
\end{proof}
 
The following definition for a klt class is a natural extension for klt singularities. 
\begin{definition}
\label{psklt}
Let $X$ be a compact K{\"a}hler manifold and let
$\alpha\in H^{1,1}(X,\R)$ be a pseudoeffective class.  Let
$T_{\min}\in\alpha$ be a positive closed $(1,1)$-current with
minimal singularities, and write
$T_{\min}=\eta+\ddbar\varphi_{\min}$ for a smooth representative
$\eta\in\alpha$.  We say that $\alpha$ is klt if
$e^{-\varphi_{\min}}\in L^1(X)$.
\end{definition}
In particular, the klt definition for $\alpha$ does not depend on the choice of $\eta$.  
 
We can prove Theorem \ref{thm:main2'}.

\begin{corollary}\label{cor:anticanonical-rcd-zero}
Let $(X,\theta)$ be a compact $n$-dimensional K\"ahler manifold.
If $-K_X$ is pseudoeffective and klt, then there exist $p>1$,
$K>0$, $\lambda>0$, and a K\"ahler current
$\omega\in\overline{\cK_\theta(p,K;\lambda)}$ with
non-negative Ricci current such that $(X,\omega)$ induces a unique
$\RCD(0,2n)$ space homeomorphic to $X$.
    
\end{corollary}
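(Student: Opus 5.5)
The plan is to produce the current $\omega$ by solving a complex Monge-Amp\`ere equation whose right-hand side is $e^{-f}\theta^n$, where $f$ is the minimal-singularity potential of $-K_X$, shifted by $\ric(\theta)$. Theorem \ref{thm:current-bochner} and Theorem \ref{thm:main2} then give the conclusion.

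\emph{Construction of $f$.} Since $-K_X = c_1(X)$ is pseudoeffective, $\ric(\theta)$ is a smooth representative of $c_1(X)$. Take the current with minimal singularities $T_{\min}=\ric(\theta)+\ddbar\varphi_{\min}\geq 0$, and put $f=\varphi_{\min}$. Then $f$ is quasi-plurisubharmonic with $\ric(\theta)+\ddbar f\geq 0$. If $\ric(\theta)\geq -\lambda\theta$, this gives $f\in\PSH(X,\lambda\theta)$ for some $\lambda>0$.

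\emph{The $L^p$ bound.} The klt hypothesis of Definition \ref{psklt} gives $e^{-f}\in L^1(X)$. We need $e^{-f}\in L^p$ for some $p>1$. By the openness conjecture in its strong form (Guan--Zhou), $e^{-f}\in L^1$ implies $e^{-(1+\epsilon)f}\in L^1$ for some $\epsilon>0$. So $e^{-f}\in L^p(X,\theta^n)$ with $p=1+\epsilon$. After adding a constant to $f$ we normalize $\int_X e^{-f}\theta^n=1=\int_X\theta^n$, and we set $K=\|e^{-f}\|_{L^p}$.

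\emph{Solving the equation.} By Ko\l odziej's theorem there is a unique $\varphi\in\PSH(X,\theta)\cap L^\infty(X)$ with $\sup_X\varphi=0$ solving
\[
(\theta+\ddbar\varphi)^n=e^{-f}\theta^n .
\]
Hence $\omega=\theta+\ddbar\varphi$ satisfies \eqref{dbpsh}, that is, $\omega\in\overline{\cK_\theta(p,K;\lambda)}$. Its Ricci current is
\[
\ric(\omega)=\ric(\theta)+\ddbar f=T_{\min}\geq 0 .
\]
By \eqref{singular-current-lower} (the Schwarz-lemma argument of Lemma \ref{lemsch} passed to the limit), $\omega\geq c\theta$, so $\omega$ is a K\"ahler current. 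If one wants $\omega$ in a prescribed K\"ahler class $\alpha$, replace $\theta$ by a K\"ahler form in $\alpha$; nothing changes.

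\emph{Conclusion.} Theorem \ref{thm:main2} provides the canonical distance $d_\omega$. It is independent of the admissible regularization, which gives uniqueness, and the identity map $(X,d_\theta)\to(X,d_\omega)$ is a homeomorphism. Theorem \ref{thm:current-bochner}, applied with \eqref{ric-current-nonnegative} holding because $\ric(\omega)=T_{\min}\geq 0$, shows that $(X,d_\omega,\omega^n)$ is $\RCD(0,2n)$.

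\emph{Main obstacle.} The key step is upgrading klt integrability from $L^1$ to $L^p$ with $p>1$. This is exactly where the strong openness theorem is needed. One must check that it applies to the global quasi-psh function $\varphi_{\min}$: apply it locally in finitely many coordinate charts and use compactness of $X$. Everything else is a direct application of results already established.
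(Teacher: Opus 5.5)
Your proposal is correct and follows essentially the same route as the paper. You take the minimal-singularity potential $f$ of $c_1(X)$ relative to $\ric(\theta)$, upgrade $e^{-f}\in L^1$ to $L^p$ by Guan--Zhou strong openness, solve $(\theta+\ddbar\varphi)^n=e^{-f}\theta^n$, and then invoke Theorem \ref{thm:current-bochner}, with Theorem \ref{thm:main2} supplying the canonical space.

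One small sign slip: to get $f\in\PSH(X,\lambda\theta)$ from $\ric(\theta)+\ddbar f\geq 0$ you need $\ric(\theta)\leq\lambda\theta$, as in the paper, not $\ric(\theta)\geq-\lambda\theta$. This is harmless, since $X$ is compact and such a $\lambda$ always exists.
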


\begin{proof}
Since $-K_X$ is pseudoeffective and $\ric(\theta)$ represents
$c_1(X)$, there is a potential $f$ with minimal singularities such
that
\begin{equation}
 f\in\PSH(X,\ric(\theta)).
\end{equation}
The klt assumption and the strong openness theorem \cite{GZ} give
\begin{equation}
 e^{-f}\in L^p(X,\theta^n)
\end{equation}
for some $p>1$.  After adding a constant to $f$, we may assume
\begin{equation}
 \int_Xe^{-f}\theta^n=[\theta]^n.
\end{equation}
Consider the complex Monge-Amp\`ere equation
\begin{equation}
 (\theta+\ddbar\varphi)^n=e^{-f}\theta^n.
\end{equation}
Since $e^{-f}\in L^p(X,\theta^n)$, there exists a unique normalized
solution $\varphi\in\PSH(X,\theta)\cap L^\infty(X)$.  If
$\omega=\theta+\ddbar\varphi$, then
\begin{equation}
 \ric(\omega)=\ric(\theta)+\ddbar f\geq0
\end{equation}
as currents since $f\in\PSH(X,\ric(\theta))$.  The conclusion now
follows from Theorem \ref{thm:current-bochner} by choosing 
$\lambda>0$ with $\ric(\theta)\leq\lambda\theta$ and by taking 
$K\geq\|e^{-f}\|_{L^p(X,\theta^n)}$ to obtain
$\omega\in\overline{\cK_\theta(p,K;\lambda)}$.
    
\end{proof}


\section{Proof of Theorem \ref{thm:main3}}

We begin our proof for Theorem \ref{thm:main3} by writing 
\begin{equation}
 \omega_0=\theta+\ddbar\varphi_0,
 \qquad
 f=-\log\frac{\omega_0^n}{\theta^n},
 \qquad \sup_X\varphi_0=0.
\end{equation}
With our convention
$\ric(\omega)=-\ddbar\log\omega^n$, the background forms for the
unnormalized flow are
\begin{equation}\label{krf-background}
 \theta_t=\theta-t\ric(\theta).
\end{equation}
Thus $[\omega(t)]=[\theta]-tc_1(X)$, and the potential equation has
the sign
\begin{equation}\label{maflow}
 \ddt\varphi
 =\log\frac{(\theta_t+\ddbar\varphi)^n}{\theta^n},
 \qquad \varphi(0)=\varphi_0.
\end{equation}
The existence, uniqueness, and stability of this weak flow follow
from \cite{ST3,DNL}.  We give the additional uniform estimates needed
for the distance assertion.

Use the regularization constructed in the proof of Theorem
\ref{thm:main2}:
\begin{equation}\label{krf-initial-regularization}
 \omega_{0,j}=\theta+\ddbar\varphi_{0,j}>0,
 \qquad
 \omega_{0,j}^n=e^{-f_j+\epsilon_j}\theta^n,
 \qquad \sup_X\varphi_{0,j}=0,
\end{equation}
where
\begin{equation}
 f_j\in C^\infty(X)\cap\PSH(X,\lambda\theta),
 \qquad f_j\searrow f,
 \qquad \epsilon_j\longrightarrow0.
\end{equation}
The estimates already proved there give, uniformly in $j$,
\begin{equation}\label{krf-initial-uniformity}
 \|\varphi_{0,j}\|_{L^\infty(X)}\leq C,
 \qquad
 \omega_{0,j}\geq c_0\theta,
 \qquad
 \ric(\omega_{0,j})\geq-C_0\theta,
\end{equation}
and
\begin{equation}\label{krf-initial-convergence}
 \varphi_{0,j}\longrightarrow\varphi_0\quad\hbox{uniformly},
 \qquad
 \|\omega_{0,j}^n-\omega_0^n\|_{L^1(X)}\longrightarrow0.
\end{equation}

Choose $\tau>0$ so small that
\begin{equation}\label{krf-background-equivalence}
 \frac12\theta\leq\theta_t\leq2\theta
 \qquad(0\leq t\leq\tau).
\end{equation}
Let $\varphi_j$ solve the smooth equation
\begin{equation}\label{maflow2}
 \partial_t\varphi_j
 =\log\frac{(\theta_t+\ddbar\varphi_j)^n}{\theta^n},
 \qquad \varphi_j(0)=\varphi_{0,j},
\end{equation}
and put
\begin{equation}
 \omega_j(t)=\theta_t+\ddbar\varphi_j(t),
 \qquad u_j=\partial_t\varphi_j
 =\log\frac{\omega_j(t)^n}{\theta^n}.
\end{equation}
The stability of the weak K\"ahler-Ricci flow implies that these
solutions converge to \eqref{maflow}, smoothly on
$X\times[a,\tau]$ for each $a>0$.

\begin{lemma}\label{schlem}
After decreasing $\tau$ if necessary, there is a constant $C>0$,
independent of $j$, such that on $X\times[0,\tau]$,
\begin{equation}\label{krf-short-time-estimates}
 |\varphi_j|\leq C,
 \qquad u_j\geq-C,
 \qquad \tr_{\omega_j(t)}\theta\leq C.
\end{equation}
Consequently, $\omega_j(t)\geq C^{-1}\theta$, and the same lower
bound holds for $\omega(t)$ when $t>0$.
\end{lemma}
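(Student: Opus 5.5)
We establish the three bounds in \eqref{krf-short-time-estimates} by parabolic maximum principle arguments on the smooth flows \eqref{maflow2}. All constants are to depend only on $\theta$, $\tau$ and the uniform initial data \eqref{krf-initial-uniformity}. Throughout, $\tau$ is fixed so that \eqref{krf-background-equivalence} holds. In particular $\ric(\theta)$, $\partial_t\theta_t=-\ric(\theta)$ and the bisectional curvature of $\theta$ are bounded by fixed constants on $[0,\tau]$.

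\emph{Step 1: the $C^0$ bound on $\varphi_j$.} For the upper bound, note that $(\theta_t+\ddbar\varphi_j)^n\le$ is not directly useful, so we instead consider $\varphi_j - Ct$. At a spatial maximum of $\varphi_j$ we have $\ddbar\varphi_j\le0$, so $u_j\le\log(\theta_t^n/\theta^n)\le C$. The maximum principle then gives $\varphi_j\le \sup\varphi_{0,j}+Ct\le C$.

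For the lower bound, the cheap argument gives only $\varphi_j\ge -C-Ct$ as long as $\theta_t\ge\frac12\theta$. At a spatial minimum, $u_j\ge\log(\theta_t^n/\theta^n)\ge -C$, and together with \eqref{krf-initial-uniformity} this gives $\varphi_j\ge -C$ on $[0,\tau]$.

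\emph{Step 2: the lower bound $u_j\ge -C$.} Differentiating \eqref{maflow2} gives
\[
\partial_t u_j=\Delta_{\omega_j}u_j-\tr_{\omega_j}\ric(\theta).
\]
Also $\Delta_{\omega_j}\varphi_j=n-\tr_{\omega_j}\theta_t$. We consider
\[
H=u_j+A\varphi_j-Bt .
\]
Using $-\tr_{\omega_j}\ric(\theta)\ge -C\tr_{\omega_j}\theta_t$, we compute
\[
(\partial_t-\Delta_{\omega_j})H\ge (A-C)\tr_{\omega_j}\theta_t+Au_j-An-B.
\]
For $A$ large, at a negative minimum of $H$ the term $Au_j$ is very negative, but $\tr_{\omega_j}\theta_t\ge n e^{-u_j/n}(\theta_t^n/\theta^n)^{1/n}$ by AM--GM. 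This dominates, forcing a contradiction once $u_j$ is very negative. The initial value satisfies
\[
u_j(0)=-f_j+\epsilon_j\ge -\sup_X f_j-1,
\]
which is uniformly bounded below because $f_j\searrow f$, $f$ is quasi-psh, and $\sup f_j\le\sup f_1$. Combining this with Step 1 gives $u_j\ge -C$.

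\emph{Step 3: the trace bound $\tr_{\omega_j}\theta\le C$.} This is the main step. We apply the parabolic Schwarz (Chern--Lu) inequality for the flow. Since $\partial_t\omega_j=-\ric(\omega_j)$, the Ricci terms cancel in the standard computation, and one gets, with $S=\tr_{\omega_j}\theta$,
\[
(\partial_t-\Delta_{\omega_j})\log S\le C_1 S
\]
with $C_1$ depending only on the curvature of $\theta$. We then take $G=\log S-A\varphi_j$. Using $\partial_t\varphi_j=u_j$ and $\Delta\varphi_j=n-\tr_{\omega_j}\theta_t\le n-\tfrac12 S$, we obtain
\[
(\partial_t-\Delta)G\le (C_1-\tfrac A2)S-Au_j+An .
\]
By Step 2, $-Au_j\le AC$. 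Hence at an interior maximum $S\le C$. At $t=0$, $S\le c_0^{-1}n$ by \eqref{krf-initial-uniformity}. The bound on $|\varphi_j|$ then converts the bound on $G$ into $\sup S\le C$.

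The delicate point, and the main obstacle, is that this argument uses the lower bound on $u_j$, which must come from Step 2 with constants independent of $j$. We must also make sure $\tau$ is small enough that the background $\theta_t$ stays comparable to $\theta$; the phrase "after decreasing $\tau$" in the statement allows this.

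Finally, $S\le C$ gives $\omega_j(t)\ge C^{-1}\theta$. The smooth convergence $\omega_j(t)\to\omega(t)$ for $t>0$ then passes this bound to the limit.
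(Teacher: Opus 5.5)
Your proposal is correct and follows the paper's proof essentially step for step. The three steps are the same: a scalar maximum-principle $C^0$ bound; a lower bound for $u_j$ via $u_j+A\varphi_j$, with initial data controlled by $f_j\le f_1$; and the parabolic Schwarz inequality applied to $\log S_j-A\varphi_j$ using $u_j\ge -C$. The only difference is in Step 2, where you keep the trace term and bound $u_j$ at the minimum point by AM--GM ($\tr_{\omega_j}\theta_t\ge c\,e^{-u_j/n}$), whereas the paper discards that term and uses the $C^0$ bound to get $(\partial_t-\Delta_{\omega_j})H_j\ge AH_j-C$ on the finite interval $[0,\tau]$; both close the argument.
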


\begin{proof}
The scalar maximum principle applied to \eqref{maflow2}, using
\eqref{krf-background-equivalence} and the first estimate in
\eqref{krf-initial-uniformity}, gives $|\varphi_j|\leq C$.

Differentiating \eqref{maflow2} yields
\begin{equation}\label{krf-u-evolution}
 (\partial_t-\Delta_{\omega_j})u_j
 =-\tr_{\omega_j}\ric(\theta),
\end{equation}
while
\begin{equation}
 (\partial_t-\Delta_{\omega_j})\varphi_j
 =u_j-n+\tr_{\omega_j}\theta_t.
\end{equation}
Choose $A>0$ such that
$A\theta_t-\ric(\theta)\geq\theta$ on $[0,\tau]$, and set
$H_j=u_j+A\varphi_j$. Then
\begin{align}
 (\partial_t-\Delta_{\omega_j})H_j
 &=Au_j-An+\tr_{\omega_j}
       (A\theta_t-\ric(\theta))\notag\\
 &\geq AH_j-C.\label{krf-H-lower}
\end{align}
At $t=0$,
$u_j(0)=-f_j+\epsilon_j$ is bounded below because
$f_j\leq f_1$.  Thus $H_j(0)$ is uniformly bounded below, and the
parabolic maximum principle applied to \eqref{krf-H-lower} gives
$H_j\geq-C$.  The uniform potential bound then gives $u_j\geq-C$.

Set $S_j=\tr_{\omega_j}\theta$. Applying the parabolic Schwarz inequality to the identity map gives
\begin{equation}
 (\partial_t-\Delta_{\omega_j})\log S_j\leq C_1S_j.
\end{equation}
Since $\theta_t\geq\frac12\theta$, for $B>2C_1$ we obtain
\begin{equation}
 (\partial_t-\Delta_{\omega_j})
   (\log S_j-B\varphi_j)
 \leq-\left(\frac B2-C_1\right)S_j+C.
\end{equation}
The initial value of $S_j$ is uniformly bounded by
\eqref{krf-initial-uniformity}.  Another application of the maximum
principle gives $S_j\leq C$, proving
\eqref{krf-short-time-estimates}.  Passing to the smooth limit for
positive time proves the assertion for $\omega(t)$.
\end{proof}

\begin{lemma}\label{lpflow}
There is a constant $C>0$ such that, as measures on $X$,
\begin{equation}\label{krf-volume-domination}
 \omega(t)^n\leq e^{Ct}\omega_0^n
 \qquad(0<t\leq\tau).
\end{equation}
In particular,
\begin{equation}\label{krf-volume-lp}
 \sup_{0<t\leq\tau}
 \left\|\frac{\omega(t)^n}{\theta^n}\right\|_{L^p(X,\theta^n)}
 <\infty.
\end{equation}
\end{lemma}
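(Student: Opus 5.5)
Since $u_j=\log\frac{\omega_j(t)^n}{\theta^n}$ and $u_j(0)=-f_j+\epsilon_j$, the claim $\omega(t)^n\le e^{Ct}\omega_0^n$ amounts to the pointwise bound
\[
u_j(x,t)\le -f_j(x)+\epsilon_j+Ct
\]
on $X\times[0,\tau]$, with $C$ independent of $j$. We will first prove this for the smooth flows and then let $j\to\infty$.

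Set $G_j=u_j+f_j-Ct$. By \eqref{krf-u-evolution}, $f_j\in\PSH(X,\lambda\theta)$, and Lemma \ref{schlem},
\[
(\partial_t-\Delta_{\omega_j})G_j
=-\tr_{\omega_j}\ric(\theta)-\tr_{\omega_j}\ddbar f_j-C
\le \tr_{\omega_j}\bigl(-\ric(\theta)+\lambda\theta\bigr)-C .
\]
Since $\tr_{\omega_j}\theta\le C'$ uniformly on $[0,\tau]$ by Lemma \ref{schlem}, we have $\tr_{\omega_j}(\lambda\theta-\ric(\theta))\le C''\tr_{\omega_j}\theta\le C''C'$. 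Choosing $C\ge C''C'$ makes $G_j$ a subsolution. The maximum principle then gives $G_j\le\sup_X G_j(0)=\epsilon_j$, which is the desired bound $\omega_j(t)^n\le e^{Ct+\epsilon_j}e^{-f_j}\theta^n$. The key point is that the PSH condition on $f_j$ gives $-\ddbar f_j\le\lambda\theta$, which is exactly the direction we need. Combined with the uniform Schwarz bound, every term is controlled, so this step should be routine.

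Since $f_j\searrow f$, we have $e^{-f_j}\le e^{-f}$, and hence $\omega_j(t)^n\le e^{Ct+\epsilon_j}\omega_0^n$ as measures. For fixed $t\in(0,\tau]$, the flows converge smoothly, $\omega_j(t)\to\omega(t)$, by the stability of the weak flow. Letting $j\to\infty$ with $\epsilon_j\to0$ gives \eqref{krf-volume-domination}. The $L^p$ bound \eqref{krf-volume-lp} follows at once: $\omega(t)^n/\theta^n\le e^{C\tau}e^{-f}$ and $\|e^{-f}\|_{L^p}\le K$.

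The only delicate point is uniformity in $j$ near $t=0$, where the Schwarz bound $\tr_{\omega_j}\theta\le C$ must hold up to $t=0$. This is exactly what Lemma \ref{schlem} supplies, through the initial bound $\omega_{0,j}\ge c_0\theta$. If one preferred to avoid it, one could instead use $A\varphi_j$ as in the proof of Lemma \ref{schlem} to absorb the trace term, but the direct argument above suffices.
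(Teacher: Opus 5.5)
Your proof is correct and is essentially the paper's: since $u_j(0)=-f_j+\epsilon_j$ and $\ric(\omega_{0,j})=\ric(\theta)+\ddbar f_j$, your $G_j$ is the paper's $F_j=u_j-u_j(0)$ shifted by $\epsilon_j-Ct$, and your subsolution computation is the paper's upper bound on $-\tr_{\omega_j}\ric(\omega_{0,j})$ written out. The only cosmetic difference is in the limit $j\to\infty$. You use $e^{-f_j}\le e^{-f}$ to compare directly with the fixed measure $\omega_0^n$, whereas the paper uses the $L^1$ convergence $\omega_{0,j}^n\to\omega_0^n$; your version is if anything slightly cleaner.
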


\begin{proof}
Let
\begin{equation}
 F_j(t)=\log\frac{\omega_j(t)^n}{\omega_{0,j}^n}
 =u_j(t)-u_j(0).
\end{equation}
Since
$\ddbar u_j(0)=\ric(\theta)-\ric(\omega_{0,j})$,
\eqref{krf-u-evolution} gives
\begin{equation}
 (\partial_t-\Delta_{\omega_j})F_j
 =-\tr_{\omega_j}\ric(\omega_{0,j}).
\end{equation}
Equations \eqref{krf-initial-uniformity} and
\eqref{krf-short-time-estimates} imply that the right hand side is
bounded above by a uniform constant.  Since $F_j(0)=0$, the maximum
principle yields
\begin{equation}
 \omega_j(t)^n\leq e^{Ct}\omega_{0,j}^n.
\end{equation}
Letting $j\to\infty$ and using \eqref{krf-initial-convergence} proves
\eqref{krf-volume-domination}.  The $L^p$ estimate follows at once
from the defining $L^p$ bound for $\omega_0^n/\theta^n$.
\end{proof}

\begin{lemma}\label{l1flow}
As $t\to0^+$,
\begin{equation}\label{krf-volume-tv}
 \|\omega(t)^n-\omega_0^n\|_{L^1(X)}\longrightarrow0.
\end{equation}
\end{lemma}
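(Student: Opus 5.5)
Write $\rho_t=\omega(t)^n/\theta^n$ and $\rho_0=\omega_0^n/\theta^n=e^{-f}$. The goal is $\rho_t\to\rho_0$ in $L^1(X,\theta^n)$. Both are probability densities: $\int_X\omega(t)^n=([\theta]-tc_1(X))^n$, which tends to $1$, so after a harmless normalization we may treat them as such. The plan is the standard argument that a density converging weakly and dominated from above by essentially the limit density must converge in $L^1$. The two inputs are the uniform convergence of potentials and the one-sided bound of Lemma \ref{lpflow}.

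\emph{Step 1: weak convergence.} By the weak-flow theory (property (3) of the solution), $\varphi(t)\to\varphi_0$ uniformly as $t\to0^+$, and $\theta_t\to\theta$ smoothly. By Bedford--Taylor continuity of the Monge--Amp\`ere operator under uniform convergence of bounded potentials, $\omega(t)^n=(\theta_t+\ddbar\varphi(t))^n\to\omega_0^n$ weakly as measures. Since $\varphi(t)-\varphi_0$ is uniformly small, this can also be checked by the usual integration by parts against a test form $\chi\,\theta^{n-k}$, with the $\theta_t-\theta=-t\ric(\theta)$ terms being $O(t)$.

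\emph{Step 2: upgrade to $L^1$ via the domination.} Lemma \ref{lpflow} gives $\rho_t\le e^{Ct}\rho_0$ a.e. Hence
\[
\int_X|\rho_t-\rho_0|\,\theta^n=\int_X(\rho_t-\rho_0)\,\theta^n+2\int_X(\rho_0-\rho_t)_+\,\theta^n
\]
and
\[
\int_X(\rho_t-\rho_0)_+\,\theta^n\le (e^{Ct}-1)\int_X\rho_0\,\theta^n\to0 .
\]
The first term on the right tends to $0$ by the total-mass computation. Using the identity $|a|=2a_+-a$ instead, we get
\[
\|\rho_t-\rho_0\|_{L^1}=2\int_X(\rho_t-\rho_0)_+\,\theta^n-\int_X(\rho_t-\rho_0)\,\theta^n ,
\]
which is bounded by $2(e^{Ct}-1)+\bigl|\int_X\rho_t\theta^n-1\bigr|\to0$. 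So the domination plus convergence of total mass already suffices, and Step 1 serves only as a consistency check. The mass convergence is immediate, since $\int_X\omega(t)^n$ is a cohomological polynomial in $t$.

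\emph{Main obstacle.} The delicate point is justifying \eqref{krf-volume-domination} as an inequality of densities for the weak flow, which the paper obtains from the smooth approximations. In particular one must check that $\omega(t)^n$ is absolutely continuous and that the limit $j\to\infty$ preserves the pointwise inequality. For $t>0$, $\omega_j(t)\to\omega(t)$ smoothly, so $\rho_{j,t}\to\rho_t$ pointwise. Meanwhile $\omega_{0,j}^n\to\omega_0^n$ in $L^1$, and after passing to a subsequence, a.e. Hence $\rho_t\le e^{Ct}\rho_0$ holds a.e., which is all Step 2 uses.
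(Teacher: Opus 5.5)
Your Step 2 is essentially the paper's own proof: the one-sided domination $\omega(t)^n\le e^{Ct}\omega_0^n$ from Lemma \ref{lpflow}, the identity $|a|=2a_+-a$, and the cohomological convergence of the total mass give the bound $2(e^{Ct}-1)V_0+|V_t-V_0|\to0$, so Step 1 is indeed unnecessary. Your remark that the domination holds a.e. (smooth convergence for $t>0$ plus a.e. convergence of $\omega_{0,j}^n$) reasonably fills in a step the paper states in one line.
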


\begin{proof}
Put $V_t=\int_X\omega(t)^n$. Cohomological invariance of the total
Monge-Amp\`ere mass gives
\begin{equation}
 V_t=\int_X(\theta-t\ric(\theta))^n,
 \qquad |V_t-V_0|\leq Ct.
\end{equation}
By \eqref{krf-volume-domination}, write
$\omega(t)^n=h_t\omega_0^n$ with $0\leq h_t\leq e^{Ct}$. Then
\begin{align}
 \|\omega(t)^n-\omega_0^n\|_{L^1(X)}
 &=\int_X|h_t-1|\,\omega_0^n\notag\\
 &\leq2(e^{Ct}-1)V_0+|V_t-V_0|\leq Ct,
\end{align}
which proves \eqref{krf-volume-tv}.
\end{proof}

We now correct the change-of-class issue. Define
\begin{equation}\label{krf-fixed-class-metric}
 \widehat\omega(t):=\omega(t)+t\ric(\theta)
 =\theta+\ddbar\varphi(t).
\end{equation}
The lower bound in Lemma \ref{schlem} and the smoothness of
$\ric(\theta)$ imply, after decreasing $\tau$, that
\begin{equation}\label{krf-metric-equivalence}
 (1-Ct)\omega(t)\leq\widehat\omega(t)\leq(1+Ct)\omega(t).
\end{equation}
Thus $\widehat\omega(t)$ is a smooth K\"ahler metric in the fixed
class $[\theta]$.  The determinant comparison in
\eqref{krf-metric-equivalence}, together with Lemmas \ref{lpflow}
and \ref{l1flow}, gives
\begin{equation}\label{krf-fixed-class-convergence}
 \sup_{0<t\leq\tau}
 \left\|\frac{\widehat\omega(t)^n}{\theta^n}
 \right\|_{L^p(X,\theta^n)}<\infty,
 \qquad
 \|\widehat\omega(t)^n-\omega_0^n\|_{L^1(X)}\longrightarrow0.
\end{equation}

Fix $\epsilon>0$. Corollary \ref{cor:main2}, applied in the fixed
class $[\theta]$ using \eqref{krf-fixed-class-convergence}, gives
for all sufficiently small $t>0$,
\begin{equation}\label{krf-fixed-class-distance}
 d_{\widehat\omega(t)}(x,y)
 <d_{\omega_0}(x,y)+\frac{\epsilon}{2}
 \qquad(x,y\in X).
\end{equation}
The uniform $L^p$ bound also gives
$\diam(X,\widehat\omega(t))\leq D$.  From
\eqref{krf-metric-equivalence},
\begin{equation}
 d_{\omega(t)}(x,y)
 \leq(1-Ct)^{-1/2}d_{\widehat\omega(t)}(x,y).
\end{equation}
Combining this with \eqref{krf-fixed-class-distance} and then taking
$t$ still smaller gives
\begin{equation}
 d_{\omega(t)}(x,y)-d_{\omega_0}(x,y)
 <\bigl((1-Ct)^{-1/2}-1\bigr)D+\frac{\epsilon}{2}
 <\epsilon
\end{equation}
uniformly in $x,y$. This proves Theorem \ref{thm:main3}.





\begin{thebibliography}{99}

\bibitem{BK}
B\l ocki, Z. and Ko\l odziej, S.,
{\em On regularization of plurisubharmonic functions on manifolds},
Proc. Amer. Math. Soc. 135 (2007), no. 7, 2089-2093

\bibitem{CC1} Cheeger, J. and Colding, T. H. {\em On the structure of spaces with Ricci curvature bounded below, I}, Journal of Differential Geometry, 46(3), 406–480

\bibitem{Dem92} Demailly, J.-P.,
{\em Regularization of closed positive currents and intersection
theory}, J. Algebraic Geom. 1 (1992), no. 3, 361-409

\bibitem{DNL} Di Nezza, E. and Lu, C. H.,
{\em Uniqueness and short time regularity of the weak
K\"ahler-Ricci flow}, Adv. Math. 305 (2017), 953-993


 \bibitem{DZ} Dinew, S. and Zhang, Z. {\em On stability and continuity of bounded solutions of degenerate complex Monge-Amp\`ere equations over compact K\"ahler manifolds}, 
Adv. Math.   225 (2010), no. 1, 367-388

\bibitem{EKS} Erbar, M., Kuwada, K. and Sturm, K.-T.,
{\em On the equivalence of the entropic curvature-dimension condition
and Bochner's inequality on metric measure spaces}, Invent. Math. 201
(2015), no. 3, 993-1071

 \bibitem{DS} Donaldson, S.K. and Sun, S. {\em Gromov-Hausdorff limits of K\"ahler manifolds and algebraic geometry},  Acta Math. 213 (2014), no. 1, 63-106

\bibitem{FGS1}  Fu, X., Guo, B. and  Song, J. {\em Geometric estimates for complex Monge-Amp\`ere equations}, J. Reine Angew. Math. 765 (2020), 69-99


\bibitem{FGS2}  Fu, X., Guo, B. and  Song, J. {\em RCD structures on singular  K\"ahler spaces of complex dimension $3$}, arXiv:2503.08865

\bibitem{FGSW1}  Fu, X., Guo, B., Song, J. and Wang, J. {\em Fundamental groups of compact  K\"ahler varieties with nef anti canonical bundle
}, arXiv:2602.07420

\bibitem{FGSW2}  Fu, X., Guo, B., Song, J. and Wang, J. {\em Fundamental groups of compact  K\"ahler varieties with nef anti canonical bundle II}, in preparation  

\bibitem{GZ} Guan, Q. and Zhou, X. {\em A proof of Demailly's strong openness conjecture}, Annals of Mathematics, 182(2), 605-616

\bibitem{GSS} Guo, B., Song, J. and Sturm, J.
{\em Geometric H\"older estimates for complex Monge-Amp\`ere equations}, in preparation 

\bibitem{GKSS} Guo, B., Ko\l odziej, S., Song, J. and Sturm, J.
{\em Holder estimates for degenerate complex Monge-Amp\`ere equations,}, arXiv:2508.20933


\bibitem{GPSS1} Guo, B., Phong, D.H., Song, J.  and Sturm, J. {\em Sobolev inequalities on K\"ahler spaces}, 2023, arXiv:2311.00221

\bibitem{GPSS2}Guo, B., Phong, D.H., Song, J.  and Sturm, J.  {\em Diameter estimates in K\"ahler geometry},  {Comm. Pure Appl. Math.}, Volume 77, Issue 8 (2024), 3520-3556

\bibitem{GPSS3}Guo, B., Phong, D.H., Song, J.  and Sturm, J.   {\em Diameter estimates in K\"ahler geometry II: removing the small degeneracy assumption}, Math. Z. 308, 43 (2024) 

\bibitem{GMS} Gigli, N., Mondino, A. and Savar\'e, G.,
{\em Convergence of pointed non-compact metric measure spaces and
stability of Ricci curvature bounds and heat flows}, Proc. Lond. Math.
Soc. (3) 111 (2015), no. 5, 1071-1129

\bibitem{Honda} Honda, S.,
{\em Bakry-\'Emery conditions on almost smooth metric measure
spaces}, Anal. Geom. Metr. Spaces 6 (2018), no. 1, 129-145


 
 \bibitem{Kol1} Ko\l dziej, S.,{\em 
The complex Monge-Amp\`ere equation}, 
Acta Math. 180 (1998), no. 1, 69–117

 \bibitem{Kol2} Ko\l odziej, S.,
{\em The Monge-Amp\`ere equation on compact  K\"ahler manifolds}, 
Indiana Univ. Math. J. 52 (2003), no. 3, 667-686
 
\bibitem{Kol3} Ko\l odziej, S.,
{\em H\"older continuity of solutions to the complex Monge-Amp\'ere equation with the right-hand side in $L^p$: the case of compact K\"ahler manifolds. },
Math. Ann. 342 (2008), no. 2, 379–386

\bibitem{Li}
Li, Y. 
{\em  On collapsing Calabi-Yau fibrations},
J. Differential Geom. 117 (2021), no. 3, 451–483

\bibitem{LS} 
 Liu, G. and Sz\'ekelyhidi, G. {\em Gromov-Hausdorff limits of K\"ahler manifolds with Ricci curvature bounded below}, Geom. Funct. Anal. 32 (2022), no. 2, 236-279


\bibitem{So14} Song, J. {\em Riemannian geometry of  K\"ahler-Einstein currents}, 	arXiv:1404.0445

\bibitem{So15} Song, J. {\em Riemannian geometry of K\"ahler-Einstein currents II: an analytic proof of Kawamata's base point free theorem}, arXiv:1409.8374



\bibitem{ST1} Song, J. and Tian, G. {\em The  K\"ahler-Ricci flow on surfaces of positive Kodaira dimension}, Invent. Math. 170 (2007), no. 3, 609-653

\bibitem{ST2} Song, J. and Tian, G. {\em Canonical measures and  K\"ahler-Ricci flow}, J. Amer. Math. Soc. 25 (2012), no. 2, 303-353 

 \bibitem{ST3} Song, J. and Tian, G. {\em The  K\"ahler-Ricci flow through singularities}, Invent. Math. 207 (2017), no. 2, 519-595 

 
\bibitem{Y} Yau, S.T.,
{\em On the Ricci curvature of a compact K\"ahler manifold and the complex MongeAmp\`ere equation. I}, Commun. Pure Appl. Math. 31 (1978), p. 339–411

 
 
 



 

\end{thebibliography}
\end{document}